\def\File-version{1.0b}    

\documentclass[noams]{alggeom}

\usepackage{amsmath, amssymb}
\usepackage{amsfonts, epsfig,color,wrapfig}
\usepackage{ xypic,verbatim,amscd,color,hyperref}
\usepackage{graphicx}

\newcommand\lL{\mathfrak{\theta}}
\newcommand\GLL{\operatorname{GL}}
\newcommand\mnn{\mathcal{N}}

\newcommand\bull{\sssize{\bullet}}

\newcommand\tr{\tilde{r}}

\newcommand\mvv\Bbb

\newcommand\frg{\mathfrak{g}}

\newcommand\mm{\mathcal{M}}

\newcommand\mf{\mathcal{F}}

\newcommand\pone{\Bbb{P}^1}

\newcommand\me{\mathcal{E}}
\newcommand\eva{\operatorname{ev}}
\newcommand\tmw{\overline{\mathcal{W}}}

\newcommand\mt{\mathcal{T}}

\newcommand\Fl{\operatorname{Fl}}

\newcommand\mv{\mathcal{V}}
\newcommand\tensor{\otimes}
\newcommand\ml{\mathcal{L}}

\newcommand\ma{\mathcal{A}}

\newcommand\mb{\mathcal{B}}

\newcommand\GL{\operatorname{GL}}

\newcommand\mg{\frg}

\newcommand\im{\operatorname{im}}
\newcommand\mw{\mathcal{W}}
\newcommand\mc{\mathcal{C}}
\newcommand\mq{\mathcal{Q}}
\newcommand\rk{\operatorname{rk}}
\newcommand\ms{\mathcal{S}}

\newcommand\pr{\operatorname{pr}}

\newcommand\xpp{x}

\newcommand\Hom{\operatorname{Hom}}

\newcommand\Gr{\operatorname{Gr}}

\newcommand{\leto}[1]{\stackrel{#1}{\to}}

\newcommand\mgg{\mathcal{G}}

\newcommand\tmvv{\overline{\mathcal{V}}}

\newcommand\SLL{\operatorname{SL}}
\newcommand\mfm{\operatorname{Gr}}

\newcommand\mpp{{\operatorname{ S}}}

\newcommand\quot{\operatorname{Quot}}
\newcommand\vr{\Bbb{V}_{\sL_{r+1},\vec{\lambda},\ell}}
\newcommand\vl{\Bbb{V}_{\sL_{\ell+1},\vec{\lambda}^T,r}}

\newcommand\vrplusone{\Bbb{V}_{\sL_{r+1},\vec{\lambda},\ell+1}}
\newcommand\vlplusone{\Bbb{V}_{\sL_{\ell+1},\vec{\lambda}^T,r+1}}

\newcommand\clar{\Bbb{A}_{\sL_{r+1},\vec{\lambda}}}

\newcommand\clal{\Bbb{A}_{\sL_{\ell+1},\vec{\lambda}^T}}

\newcommand\aclar{{A}_{\sL_{r+1},\vec{\lambda}}}
\newcommand\aclal{{A}_{\sL_{\ell+1},\vec{\lambda}^T}}

\newcommand\parbun{\mathcal{P}{ar}}
\newcommand{\ovop}[1]{\overline{\operatorname{#1}}}
\newcommand\cone{\alpha}
\newcommand\ctwo{\beta}
\newcommand\op{\operatorname}

\newtheorem{theorem}{Theorem}[section]

\newtheorem{remark}[theorem]{ Remark}
\newtheorem{conjecture}[theorem]{Conjecture}

\newtheorem{question}[theorem]{Question}
\newtheorem{proposition}[theorem]{Proposition}

\newtheorem{lemma}[theorem]{Lemma}

\newtheorem{definition}[theorem]{Definition}
\newtheorem{definition/lemma}[theorem]{Definition/Lemma}
\newtheorem{defi}[theorem]{Definition}
\newcommand{\sL}{\mathfrak{sl}}


%
\begin{document}

\title{Vanishing and identities of conformal blocks divisors }
\author{Prakash Belkale}
\email{belkale@email.unc.edu}
\address{Department of Mathematics, University of North Carolina, Chapel Hill, NC 27599}
\author{Angela Gibney}
\email{agibney@math.uga.edu}
\address{Department of Mathematics, University of Georgia, Athens, GA 30602}

\author{Swarnava Mukhopadhyay}
\email{swarnava@umd.edu}
\address{Department of Mathematics, University of Maryland,
College Park, MD 20742}
%

\classification{14H10, 14H60, 14N35, 14D20, 14C20 (primary),  14E30  (secondary).}
\keywords{conformal blocks, generalized theta functions, moduli of curves, quantum cohomology,  strange duality.}
\thanks{P.B. and S.M.  were supported on NSF grant  DMS-0901249, and A.G. on NSF grant DMS-1201268.}

\begin{abstract}
Conformal block divisors in type A  on $\ovop{M}_{0,\op{n}}$ are shown to satisfy new symmetries when levels and ranks
are interchanged in non-standard ways.   A connection with the quantum cohomology of Grassmannians reveals that these divisors vanish above the critical level.
\end{abstract}

\maketitle


\section{Introduction}
Given a simple Lie algebra $\mg$ and suitable choice of weights $\vec{\lambda}$ for $\mg$ at level $\ell$, there is a globally generated vector bundle $\mathbb{V}_{\mg,\vec{\lambda},\ell}$ of conformal blocks on the moduli space $\ovop{M}_{0,\op{n}}$ of stable, n-pointed rational curves.  Global generation on $\ovop{M}_{0,\op{n}}$ comes from the surjection
$$\mathbb{A}_{\mg, \vec{\lambda}} =\op{A}_{\mg, \vec{\lambda}}  \times  \ \ovop{M}_{0,\op{n}}\twoheadrightarrow \mathbb{V}_{\mg,\vec{\lambda},\ell},  \ \,  \ \mbox{ where }    \op{A}_{\mg, \vec{\lambda}}=(\tensor_{i=1}^n V_{\lambda_i})_{\frg}=\frac{\tensor_{i=1}^n V_{\lambda_i}}{\frg (\tensor_{i=1}^n V_{\lambda_i})}$$
is the vector space of coinvariants, the largest quotient space on which $\mg$ acts trivially.  Here $V_{\lambda}$ is the irreducible finite dimensional representation of $\frg$ with highest weight $\lambda$.  Consequently,  one obtains a morphism $f_{\mathbb{V}}$ to a Grassmannian variety of $\op{rk}\mathbb{V}_{\mg,\vec{\lambda},\ell}$ quotients of $\op{A}_{\mg, \vec{\lambda}}$:
$$\overline{\operatorname{M}}_{0,\op{n}} \overset{f_{\mathbb{V}}}{\longrightarrow}  \operatorname{Grass}^{quo}(\op{rk}\mathbb{V}_{\mg,\vec{\lambda},\ell},\op{A}_{\mg, \vec{\lambda}} ) \overset{p}{\hookrightarrow} \mathbb{P}^{N-1},  \ \mbox{ where } {N={{\op{rk}\mathbb{A}_{\mg, \vec{\lambda}}}\choose{{\op{rk}\mathbb{V}_{\mg,\vec{\lambda}}}}}}.$$

  The composition of  $f_{\mathbb{V}}$ with the Pl\"ucker embedding $p$, is given by  $c_1(\mathbb{V}_{\frg,\vec{\lambda},\ell})$.  These so-called conformal blocks divisors  $\mathbb{D}_{\mg,\vec{\lambda},\ell}=c_1 (\mathbb{V}_{\mg,\vec{\lambda},\ell})$ generate a full dimensional sub-cone in the nef cone.

 It is well-known that given   $\mg$ and  $\vec{\lambda}$, there is an integer $\ell_0(\mg, \vec{\lambda})$, for which
  $\mathbb{V}_{\mg,\vec{\lambda},\ell}$ and $\mathbb{A}_{\mg, \vec{\lambda}}$ are isomorphic
 for all $\ell > \ell_0$.   In particular, $\operatorname{rk}\mathbb{A}_{\mg, \vec{\lambda}}=\operatorname{rk}\mathbb{V}_{\mg,\vec{\lambda},\ell}$, and so the conformal blocks divisor $\mathbb{D}_{\mg,\vec{\lambda},\ell}$ vanishes, and its map contracts everything, for all $\ell > \ell_0$.

Here we study two different bounds for $\ell_0$:  the {\em{critical level}} $c(\sL_{r+1},\vec{\lambda})$, and  the {\em{theta level}} $\theta(\mg,\vec{\lambda})$.    The critical level, which we introduce, is related to an interpretation of the ranks of the bundles with $\mg=\sL_{r+1}$ to the quantum cohomology of the Grassmannian.  If $\ell$ is greater than either bound, then $\mathbb{D}_{\mg,\vec{\lambda},\ell}=0$ (Theorem \ref{newthing} and Remark \ref{ThetaVanishing}).
While equal for $r=1$, the critical and theta levels are generally distinct, reflecting different aspects of the weights $\vec{\lambda}$.

As an application of vanishing, we give sufficient conditions for divisors $\mathbb{D}_{\frg,\vec{\lambda},\ell}$ to be extremal in the nef cone (Propositions \ref{extremal1} and \ref{extremal2}), and show the morphisms they define factor through birational contraction maps to Hassett spaces (Theorems \ref{Smallerd} and \ref{Smallerdnew}).   We find a surprising relationship between pairs of
critical level bundles  $\mathbb{V}_{\sL_{r+1},\vec{\lambda},\ell}$ and $\mathbb{V}_{\sL_{\ell+1},\vec{\lambda}^T,r}$, where $\ell$ is the critical level for $(\sL_{r+1},\vec{\lambda})$.  In this case, $r$ is the critical level for the pair $(\sL_{\ell+1},\vec{\lambda}^T)$, where $\vec{\lambda}^T=(\lambda_1^T,\ldots,\lambda_n^T)$ are the representations whose Young diagrams $\lambda_i^T$ are given by the transposed Young diagrams associated to the $\lambda_i$ as described in Section \ref{Notation}.  Namely, while the bundles $\mathbb{V}_{\sL_{r+1},\vec{\lambda},\ell}$ and $\mathbb{V}_{\sL_{\ell+1},\vec{\lambda}^T,r}$ are not isomorphic, their first Chern classes are equal, and the images of the maps they define are related by Grassmann duality (Proposition \ref{strange}).  

In addition to the identity $\mathbb{D}_{\sL_{r+1},\vec{\lambda},\ell}=\mathbb{D}_{\sL_{\ell+1},\vec{\lambda}^T,r}$ for critical level pairs, we show that $\mathbb{D}_{\sL_{r+1},\vec{\lambda},\ell}=\mathbb{D}_{\sL_{r+1},\vec{\lambda}^*,\ell}$ (Proposition \ref{relation}).  Here $\vec{\lambda}^*=(\lambda_1^{*},\ldots,\lambda_n^{*})$, and $\lambda^*$ denotes the highest weight of $(V_{\lambda})^*$.
Taken together, these identities enhance our criteria for detecting vanishing and understanding of the nef cone.

\subsection{Definitions, and precise statement of results}
We now state our results, outline the paper and our methods, and put the work into context.

\subsubsection{Notation}\label{Notation}
For a finite dimensional simple Lie algebra $\frg$, and a positive integer  $\ell$ (called the level), let ${P}_{\ell}(\frg)$ denote the set of dominant integral weights $\lambda$ with 
$(\lambda,\theta)\leq \ell$.  Here $\theta$ is the highest root, and $(\ ,\ )$ is the Killing form, normalized so that $(\theta,\theta)=2$.  If $H_{\theta}$ is the co-root corresponding to the highest root $\theta$, then $(\lambda,\theta)=\lambda(H_{\theta})$.

To a triple $(\frg,\vec{\lambda},\ell)$, such that $\vec{\lambda} \in P_{\ell}(\frg)^n$, there corresponds a  a vector bundle of conformal blocks $\mathbb{V}_{\frg,\vec{\lambda},\ell}$ on the moduli stack $\overline{\mathcal{M}}_{g,n}$ \cite{TUY} (see also \cite{Tsuchimoto,Sorger,Fakh}).

Finite dimensional irreducible polynomial representations for $\operatorname{GL}_{r+1}$ are parameterized by Young diagrams $\lambda=(\lambda^{(1)}\geq \lambda^{(2)}\geq\dots\geq\lambda^{(r)}\geq \lambda^{(r+1)}\geq 0)$.  Young diagrams $\lambda$ and $\mu$ give the same representation of $\operatorname{SL}_{r+1}$ (equivalently $\sL_{r+1}$) if $\lambda^{(a)}-\mu^{(a)}$ is a constant independent of $a$.
We use the notation $|\lambda|=\sum_{i=1}^r \lambda^{(i)}$.  We say that $\lambda\in P_{\ell}(\sL_{r+1})$ if and only if $\lambda^{(1)}-\lambda^{(r+1)}\leq \ell$.  We refer to $\lambda$ as normalized if $\lambda^{(r+1)}=0$. The normalization of $\lambda$ is 
$\lambda-\lambda^{(r+1)}\cdot(1,1,\dots,1)$. Also note that $(\lambda,\theta)=\lambda(H_{\theta})=\lambda^{(1)}-\lambda^{(r+1)}$.

\subsubsection{The critical level}

\begin{defi} \label{CritLevDefi} Let $\vec{\lambda}=(\lambda_1,\dots,\lambda_n)$ be an $n$-tuple of  {\em normalized} integral weights for $\sL_{r+1}$, assume that $r+1$ divides $\sum_{i=1}^n|\lambda_i|$, and define the critical level for the pair  $(\sL_{r+1},\vec{\lambda})$ to be
$$ c(\sL_{r+1},\vec{\lambda})=-1+\frac{1}{r+1}\sum_{i=1}^n|\lambda_i|.$$
One can define $c(\sL_{r+1},\vec{\lambda})$ in general, by replacing each $\lambda_i$ by its normalization.
\end{defi}

\begin{remark}\label{remarkI}
\begin{enumerate}
\item The non-zeroness of $\rk \mathbb{A}_{\frg,\vec{\lambda}}$ (similarly $\rk\mathbb{V}_{\frg, \vec{\lambda},\ell}$) for $\frg=\sL_{r+1}$ is controlled by a non-trivial system of inequalities ~\cite{klyachko,knutsontao,fulton,b4,belkaleicm}. It is therefore unreasonable to look for an optimal critical level valid for all data.
\item The ranks of  $ \mathbb{A}_{\frg,\vec{\lambda}}$  and $\mathbb{V}_{\frg, \vec{\lambda},\ell}$ coincide with the ranks of 
global sections of line bundles over suitable moduli spaces - the moduli spaces of (semistable) filtered vector spaces, and
of semistable parabolic bundles respectively. One may look for levels at which parabolic semistable bundles are necessarily trivial (as bundles), so that these moduli spaces and line bundles are the same, and hence the ranks coincide. The resulting bounds for vanishing are weaker than the critical level bounds (by one), see Section ~\ref{Z} and Question \ref{GenCritLev}.
\end{enumerate}
\end{remark}

\begin{proposition}\label{newthing}
Suppose $\vec{\lambda} \in P_{\ell}(\sL_{r+1})^n$, and
$\ell>c(\sL_{r+1},\vec{\lambda})$. Then
$ \clar = \Bbb{V}_{\sL_{r+1},\vec{\lambda},\ell}$, and in particular, one has that $\Bbb{D}_{\sL_{r+1},\vec{\lambda},\ell}=c_1(\Bbb{V}_{\sL_{r+1},\vec{\lambda},\ell})=0.$
\end{proposition}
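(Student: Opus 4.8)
The plan is to establish the rank equality $\rk\vr=\rk\clar$, from which the Proposition is immediate. Indeed $\clar$ is the trivial bundle on $\ovop{M}_{0,n}$ whose fibre is the space of coinvariants $\bigl(\bigotimes_i V_{\lambda_i}\bigr)_{\sL_{r+1}}$, and by global generation there is a surjection of vector bundles $\clar\twoheadrightarrow\vr$; if the ranks agree its kernel is a torsion-free sheaf of rank $0$ on the integral variety $\ovop{M}_{0,n}$, hence $0$, so $\clar\cong\vr$ and $\Bbb{D}_{\sL_{r+1},\vec{\lambda},\ell}=c_1(\vr)=c_1(\clar)=0$. Replacing each $\lambda_i$ by its normalization changes neither bundle, so we may take the $\lambda_i$ normalized; if $r+1\nmid\sum_i|\lambda_i|$ then $\sum_i\lambda_i$ lies outside the root lattice of $\sL_{r+1}$, both ranks vanish, and there is nothing to do. So assume $m:=\tfrac1{r+1}\sum_i|\lambda_i|=c(\sL_{r+1},\vec\lambda)+1$ is a non-negative integer and $\ell\ge m$, which is exactly the hypothesis $\ell>c(\sL_{r+1},\vec\lambda)$.

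Put $X:=\Gr(r+1,\,r+1+\ell)$. Since $\ell\ge m$, the rectangular partition $\mu_0:=(m^{r+1})$ lies inside the $(r+1)\times\ell$ box, and the first ingredient I would invoke is the quantum-cohomology description of the conformal blocks rank,
$$\rk\vr=\bigl[\text{ coefficient of }\sigma_{\mu_0}\text{ in }\sigma_{\lambda_1}\star\cdots\star\sigma_{\lambda_n}\text{ in }\QH(X)\bigr],$$
where $\star$ is the quantum product and $\sigma_\lambda$ the Schubert class of the partition $\lambda$ (each $\lambda_i\in P_\ell(\sL_{r+1})$ fits in the $(r+1)\times\ell$ box). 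This is the Verlinde/fusion rule for $\sL_{r+1}$ transported through the Gepner presentation of $\QH(X)$ and the $\sL$-versus-$\mathfrak{gl}$ dictionary, and is the place where the connection with the quantum cohomology of the Grassmannian developed earlier enters.

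The decisive observation is then a degree count. In $\QH(X)$ the quantum parameter $q$ has positive (complex) cohomological degree $r+1+\ell$, so the $q^d$-component of $\sigma_{\lambda_1}\star\cdots\star\sigma_{\lambda_n}$ lies in cohomological degree $\sum_i|\lambda_i|-d(r+1+\ell)$; since $|\mu_0|=(r+1)m=\sum_i|\lambda_i|$, for every $d\ge1$ this component has strictly smaller degree than $\sigma_{\mu_0}$ and cannot contribute. Hence only the classical term $d=0$ survives, so $\rk\vr$ is the coefficient of $\sigma_{\mu_0}$ in the ordinary cup product $\sigma_{\lambda_1}\cdots\sigma_{\lambda_n}$ in $H^*(X)$. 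As $H^*(X)$ is the quotient of the ring of symmetric functions in which $s_\nu\mapsto\sigma_\nu$ for $\nu$ in the box and $s_\nu\mapsto0$ otherwise, and $\mu_0$ lies in the box, this coefficient equals the coefficient of the Schur function $s_{\mu_0}$ in $\prod_i s_{\lambda_i}$. By Schur--Weyl that is the multiplicity of the one-dimensional $\operatorname{GL}_{r+1}$-representation $\det^{\,m}$ in $\bigotimes_i V_{\lambda_i}$, i.e. $\dim\bigl(\bigotimes_i V_{\lambda_i}\bigr)^{\sL_{r+1}}=\dim\bigl(\bigotimes_i V_{\lambda_i}\bigr)_{\sL_{r+1}}=\rk\clar$. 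Thus $\rk\vr=\rk\clar$.

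The crux is the first ingredient — the quantum-cohomology formula for $\rk\vr$, and in particular pinning down the $\sL$-versus-$\mathfrak{gl}$ normalization so that the class to be extracted is precisely the rectangle $\mu_0$, which is a legitimate Schubert class exactly when $\ell\ge m$. Granting that, the argument is essentially forced: positivity of $\deg q$ annihilates every quantum correction to the $\sigma_{\mu_0}$-coefficient, and what remains is the stability of classical Schubert calculus together with Schur--Weyl.
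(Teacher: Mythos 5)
Your overall strategy --- convert $\rk\vr$ into a coefficient in a quantum product in $\QH^*(\Gr(r+1,r+1+\ell))$, kill all $q^{\ge 1}$ contributions by a degree count, and identify the surviving classical coefficient with $\rk\clar$ via Littlewood--Richardson --- is exactly the paper's argument (Section \ref{newthingproof}, in the cohomological form of Remark \ref{ClassicalRank}), and your preliminary reductions (normalization, the divisibility condition, upgrading the rank equality to an isomorphism via the torsion-free kernel) are all fine. The difficulty is the ``first ingredient,'' which you yourself flag as the crux. The form of Witten's dictionary actually available (Theorem \ref{BelkaleWitten} and Section \ref{eleven}, following \cite{witten,agni,b4}) reads: writing $\sum_i|\lambda_i|=(r+1)(\ell+s)$, for $s\ge 0$ the rank is the coefficient of $q^s[\operatorname{pt}]$ in $\sigma_{\lambda_1}\star\cdots\star\sigma_{\lambda_n}\star\sigma_{(\ell,0,\dots,0)}^{\star s}$, while for $s<0$ the dictionary is stated as the bare assertion $\rk\vr=\rk\clar$. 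Under your hypothesis $s=m-\ell\le 0$. When $\ell=m$ your formula coincides with the standard one, since then $\mu_0=(m^{r+1})=(\ell^{r+1})=[\operatorname{pt}]$ and $s=0$; in that case your proof is complete and is the paper's proof. But when $\ell>m$, the formula you invoke --- extract the coefficient of the non-point class $\sigma_{(m^{r+1})}$, with no power of $q$ and no auxiliary insertions --- is not the statement found in the references, and by your own degree count it reduces instantly to ``$\rk\vr$ equals the classical Littlewood--Richardson number,'' i.e.\ to the proposition being proved. So in the only regime where the statement goes beyond $\ell=c+1$, your key input either begs the question or silently appeals to the $s<0$ clause of the dictionary, which is the assertion at stake.

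The gap is easy to close, and the paper closes it in two ways. Either reduce to $\tilde\ell=c(\sL_{r+1},\vec\lambda)+1$, using that $\rk\Bbb{V}_{\sL_{r+1},\vec{\lambda},\ell}$ is non-decreasing in $\ell$ and bounded above by $\rk\clar$: if all $\lambda_i\in P_{\tilde\ell}(\sL_{r+1})$ run your argument at level $\tilde\ell$ (where $s=0$), and otherwise show directly that $\rk\clar=0$ (the paper's case (2); in your framework this is the vanishing of $c^{(m^{r+1})}_{\lambda_1\cdots\lambda_n}$ when some $\lambda_i\not\subseteq(m^{r+1})$, but one must separately know $\rk\vr=0$, which here follows since $\vr$ is a quotient of $\clar$). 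Or else run the enumerative form of the dictionary at level $\ell$ itself: the evenly split bundle then has degree $s\le 0$, so every degree-zero rank-$(r+1)$ subbundle is trivial and the count collapses to the classical Schubert count, which is $\rk\clar$. Either route supplies the justification your ``crux'' currently lacks.
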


The proof of Proposition \ref{newthing}, given in Section \ref{newthingproof}, follows from an enumerative interpretation of conformal blocks \cite{witten,agni} and  of classical invariants for $\sL_{r+1}$ described in Section \ref{WittenEnumerative}.  As is explained in Section \ref{eleven} one can compute  the rank of $\Bbb{V}_{\sL_{r+1},\vec{\lambda},\ell}$ by calculating a particular (quantum) product of Schubert cycles in a Grassmannian.  In  case  $\ell>c(\sL_{r+1},\vec{\lambda})$, the corresponding quantum cohomology structure coefficient equals the classical structure constant, which gives the rank of the bundle of coinvariants.

The critical level is defined only for Lie algebras of form $\frg=\sL_{r+1}$. The theta level which we define next is valid for arbitrary simple Lie algebras.

\subsubsection{The theta level and comparison to the critical level}\label{CriticalLevelGeneral}
The theta level, defined below, arises from a description of $\mathbb{V}_{\frg,\vec{\lambda},\ell}$
as an explicit quotient of $\mathbb{A}_{\frg,\vec{\lambda}}$ \cite{GW,FSV1,FSV2,Beauville}. For $\frg=\sL_{r+1}$, this level can be related to  critical levels for related data (see Lemma \ref{averagenew}). Some basic properties of the theta level are explored in \cite{BGMB}.

\begin{defi}\label{FSVCriticalLevel}Given a pair $(\frg, \vec{\lambda})$, we refer to
$$\lL(\frg,\vec{\lambda})=  -1+ \frac{1}{2} \sum (\lambda_i,\theta)\in \frac{1}{2}\Bbb{Z}$$
as the theta level. 
\end{defi}

\begin{remark}\label{ThetaVanishing}\label{comparison}
\begin{enumerate}
\item It is an immediate consequence of the explicit quotient description of $\mathbb{V}_{\frg,\vec{\lambda},\ell}$, that if $\ell > \lL(\frg,\vec{\lambda})$, and $\vec{\lambda} \in P_{\ell}(\frg)$, then $\mathbb{V}_{\frg,\vec{\lambda},\ell}= \mathbb{A}_{\frg,\vec{\lambda}}$, and $\mathbb{D}_{\frg,\vec{\lambda},\ell}=c_1(\mathbb{V}_{\frg,\vec{\lambda},\ell})=0$ (see \cite{BGMB} and Lemma  \ref{averagenew}).
\item  It is easy to see that the theta and critical levels coincide for $\sL_2$. Lemma \ref{averagenew} (a) shows that the theta level for $(\sL_{r+1},\vec{\lambda})$ is the average
of the critical levels for $(\sL_{r+1},\vec{\lambda})$ and $(\sL_{r+1},\vec{\lambda}^*)$.
\end{enumerate}
\end{remark}

\subsubsection{Applications of vanishing}
A divisor on a projective variety $X$ is nef if it non-negatively intersects all curves on $X$.  The set of nef divisors forms  the nef cone, denoted $\op{Nef}(X)$.  Properties of $\op{Nef}(X)$ reflect aspects of the morphisms admitted by $X$.  Feasibility of studying the birational geometry of a particular space $X$  by way of its nef cone is highly variable.  On one extreme, if $X$ is a so-called Mori Dream Space, then $\op{Nef}(X)$ has a finitely number of extremal rays, and each extremal ray is spanned by a base point free divisor.

  It was recently  shown that  $\ovop{M}_{0,\op{n}}$ is not a Mori Dream Space for $n \ge 13$ \cite{MDSBS, GoKa}.  Whether there are an infinite number of distinct extremal rays of the nef cone
  and whether those rays are spanned by base point free divisors are two very open questions.

Propositions Props ~\ref{extremal1} and ~\ref{extremal2} give criteria for detecting when a conformal blocks divisor $\mathbb{D}$ lies on an extremal face of the nef cone given by a so-called   $\op{F}$-curve.  The extremality criteria allow us, in Theorems \ref{Smallerd} and \ref{Smallerdnew}, to prove that the morphisms given by certain conformal blocks divisors $\mathbb{D}_{\frg, \vec{\lambda},\ell}$  factor through maps to Hassett spaces $\overline{\operatorname{M}}_{0,\mathcal{A}}$, where the weight data $\mathcal{A}$ is determined by $\frg$, $\ell$ and
$\vec{\lambda}$.

  \subsubsection{Relations between divisors}

When finding where in the nef cone the conformal blocks divisors reside, we are interested not only in extremality (as results like Propositions ~\ref{extremal1} and ~\ref{extremal2} tell us about), but also in questions of independence.   An interesting feature of the set of conformal blocks divisors is that together they generate a dense sub-cone of the nef cone.  It is natural to wonder whether they cover the entire nef cone, and whether the cone they span is finitely generated.  To answer such questions we would like to know their dependence on one another.

We study two types of relations between conformal blocks divisors given by different sets of data.   First, in Proposition \ref{relation} we point out the natural identity  between divisors $c_1 (\mathbb{V}_{\frg,\vec{\lambda},\ell})=c_1 (\mathbb{V}_{\frg,\vec{\lambda}^*,\ell})$, that comes from an involution of the Weyl chamber.  Second, in Theorem \ref{main}, we identify pairs of so-called critical level partner divisors $c_1(\mathbb{V}_{\sL_{r+1},\vec{\lambda},\ell})=c_1(\mathbb{V}_{\sL_{\ell+1},\vec{\lambda}^T,r})$.    These divisor class identities multiply our extremality results.  Namely Proposition \ref{extremal1} holds for $\mathbb{D}_{\sL_{r+1}, \vec{\lambda}, \ell}$ and $\mathbb{D}_{\sL_{r+1}, \vec{\lambda}^*, \ell}$
(see Proposition \ref{relation});
  and Proposition \ref{extremal2} holds for each of $\mathbb{D}_{\sL_{r+1}, \vec{\lambda}, \ell}$  and
$\mathbb{D}_{\sL_{\ell+1}, \vec{\lambda}^T, r}$.

  We next discuss the critical level identities, which involve a more robust statement about the bundles themselves.
  To begin with, critical level bundles come in pairs.
If $\lambda_i \in P_{\ell}(\sL_{r+1})$, then $\lambda_i^T \in P_{r}(\sL_{\ell+1})$, where $\lambda_i^T$ is obtained by taking the transpose of the Young diagram associated to the weight $\lambda
  _i$.  Since $|\lambda_i|=|\lambda_i^T|$, it follow that
$\ell=c(\sL_{r+1},\vec{\lambda})$ if and only if $r=c(\sL_{\ell+1},\vec{\lambda^T})$.

\begin{proposition}\label{strange}
Suppose $\vec{\lambda} \in P_{\ell}(\sL_{r+1})^n$ where
$\ell= c(\sL_{r+1},\vec{\lambda})$. Then:
\begin{enumerate}
\item[(a)] one has that:
\begin{equation}\label{morpheusnew}
\rk \vr +\rk\vl= \rk \mathbb{A}_{\sL_{r+1}, \vec{\lambda}} =\rk \mathbb{A}_{\sL_{\ell+1}, \vec{\lambda}^T} ;
\end{equation}
\item[(b)] critical level partner divisors are the same: $$\mathbb{D}_{\sL_{r+1},\vec{\lambda},\ell}=\mathbb{D}_{\sL_{\ell+1},\vec{\lambda^T},r}.$$
\end{enumerate}
\end{proposition}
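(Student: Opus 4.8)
The plan is to prove~(a) by computing both ranks via the enumerative interpretation of Sections~\ref{WittenEnumerative}--\ref{eleven}, and then to deduce~(b) by promoting the rank identity~\eqref{morpheusnew} to a comparison of the morphisms to Grassmannians defined by the two bundles.

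For the second equality in~(a): since $\ell=c(\sL_{r+1},\vec\lambda)$ forces $\sum_i|\lambda_i|=(r+1)(\ell+1)$, after normalizing each $\lambda_i$ fits in the $r\times\ell$ box and each $\lambda_i^T$ in the $\ell\times r$ box, so $\rk\clar=\dim(\bigotimes_iV_{\lambda_i})^{\SLL_{r+1}}$ and $\rk\clal=\dim(\bigotimes_iV_{\lambda_i^T})^{\SLL_{\ell+1}}$ are classical Schubert intersection numbers in $\Gr(r+1,r+\ell+2)$ and $\Gr(\ell+1,r+\ell+2)$ respectively (the box condition puts us in the stable range), and the Grassmann duality $\Gr(r+1,r+\ell+2)\cong\Gr(\ell+1,r+\ell+2)$, $\sigma_\mu\leftrightarrow\sigma_{\mu^T}$, identifies them --- equivalently, this is the iterated transpose symmetry $c^{\nu}_{\alpha\beta}=c^{\nu^T}_{\alpha^T\beta^T}$ of Littlewood--Richardson coefficients. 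For the first equality I would use the description of Section~\ref{eleven} expressing $\rk\vr$ as a structure coefficient of the quantum product $\sigma_{\lambda_1}\star\dots\star\sigma_{\lambda_n}$ in the small quantum cohomology of the relevant Grassmannian and $\rk\clar$ as the corresponding classical structure constant; by the same vanishing used in the proof of Proposition~\ref{newthing}, at the critical level only the leading quantum correction is nonzero, so $\rk\clar-\rk\vr$ is a single degree-one Gromov--Witten number. I would then recognize that number as $\rk\vl$ by applying Grassmann duality together with the quantum-to-classical principle for Grassmannians, rewriting the degree-one invariant for $(\sL_{r+1},\vec\lambda,\ell)$ as the enumerative count that computes $\rk\vl$. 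This is consistent with the exchange $(\sL_{r+1},\vec\lambda)\leftrightarrow(\sL_{\ell+1},\vec\lambda^T)$ --- legitimate since $\ell=c(\sL_{r+1},\vec\lambda)$ iff $r=c(\sL_{\ell+1},\vec\lambda^T)$ --- because running the same argument with the roles exchanged gives $\rk\clal-\rk\vl=\rk\vr$, the same identity.

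For~(b): $\mathbb{D}_{\sL_{r+1},\vec{\lambda},\ell}=c_1(\vr)$ is the pullback of $\mathcal{O}(1)$ under the morphism $f_{\vr}\colon\ovop{M}_{0,\op{n}}\to\operatorname{Grass}^{quo}(\rk\vr,\aclar)$ attached to the surjection $\clar\twoheadrightarrow\vr$, and likewise $\mathbb{D}_{\sL_{\ell+1},\vec{\lambda}^T,r}=c_1(\vl)=f_{\vl}^*\mathcal{O}(1)$. The kernel $\mathbb{K}:=\ker(\clar\twoheadrightarrow\vr)$ is a sub-bundle of the trivial bundle $\clar$, of rank $\rk\vl$ by~\eqref{morpheusnew}, and since $c_1(\clar)=0$ one gets $\mathbb{D}_{\sL_{r+1},\vec{\lambda},\ell}=-c_1(\mathbb{K})=c_1(\mathbb{K}^\vee)$; as $\mathbb{K}^\vee$ and $\vl$ already have equal rank by~(a), the identity~(b) reduces to exhibiting an isomorphism of vector bundles $\mathbb{K}^\vee\cong\vl$ on $\ovop{M}_{0,\op{n}}$. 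Equivalently, using the identifications of~(a) and the fact that the Grassmann duality $\operatorname{Grass}^{quo}(a,W)\cong\operatorname{Grass}^{quo}(\dim W-a,W^\vee)$ carries $\mathcal{O}(1)$ to $\mathcal{O}(1)$ when $W$ is trivial (it is induced by $\bigwedge^aW\cong\bigwedge^{\dim W-a}W^\vee\otimes\bigwedge^{\dim W}W$), one must show that $f_{\vr}$ and $f_{\vl}$ are intertwined by this duality.

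The main obstacle is exactly this: a strange-duality statement for the conformal blocks bundles themselves, not merely for their ranks or first Chern classes. I would attack it through the geometric model of Section~\ref{WittenEnumerative} --- realize $\vr$ and $\vl$ fiberwise from a common moduli space over $\ovop{M}_{0,\op{n}}$ (spaces of maps from the fibers to the Grassmannian, or Quot schemes), read off $\mathbb{K}$ from the complementary/dual Schubert data, and match it with $\vl$ under the Grassmann duality relating the two Grassmannians, so that the two pullbacks of $\mathcal{O}(1)$ agree. An alternative is Fakhruddin's formula \cite{Fakh} for $c_1$ on $\ovop{M}_{0,\op{n}}$, but it seems less convenient here: the $\psi_i$-coefficients $\rk(\vr)\,\tfrac{(\lambda_i,\lambda_i+2\rho)}{2(r+1+\ell)}$ and $\rk(\vl)\,\tfrac{(\lambda_i^T,\lambda_i^T+2\rho)}{2(\ell+1+r)}$ are individually unequal (the denominators agree), and the boundary coefficients involve ranks of conformal blocks bundles that are \emph{not} at their critical level, so $\mathbb{D}_{\sL_{r+1},\vec{\lambda},\ell}=\mathbb{D}_{\sL_{\ell+1},\vec{\lambda}^T,r}$ would emerge only after invoking the relations in $\Pic(\ovop{M}_{0,\op{n}})$. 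I therefore expect the geometric route to be the right one, with the identification of the kernel bundle $\mathbb{K}$ as its crux.
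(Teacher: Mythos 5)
Your reduction of (b) to a bundle isomorphism $\mathbb{K}^\vee\cong\vl$, where $\mathbb{K}=\ker(\clar\twoheadrightarrow\vr)$, is exactly the paper's Theorem \ref{main}, and your identification of this as the crux is accurate --- but you leave the crux unproved, and the two ingredients that actually carry it are absent from your sketch. The paper first constructs the classical duality isomorphism $\aclar^*\leto{\sim}\aclal$ as the section of a determinantal divisor $D\subseteq\Fl(V)^n\times\Fl(Q)^n$, made nondegenerate by the Schubert basis of \cite{PB}; it then shows the composite $\vr^*\to\clar^*\to\clal\to\vl$ is \emph{zero}, so that the duality descends to the quotient $\clar^*/\vr^*$. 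That vanishing is not a Grassmann-duality bookkeeping step: it uses Pauly's identification of $\vr^*$ with generalized theta functions on $\parbun_{r+1}$, the fact that sections in the image of multiplication by the theta section vanish at points with non-trivial underlying bundle (Remark \ref{vanishing1}), and an explicit basis of $H^0(\mm,\ma)$ indexed by the solutions of the \emph{degenerate} enumerative problem on $\mathcal{O}(1)\oplus\mathcal{O}(-1)\oplus\mathcal{O}^{\oplus(N-2)}$, split into the two families $\{x_i\}$, $\{x'_a\}$ whose underlying sub- and quotient bundles are non-trivial in complementary ways. Without some substitute for this vanishing argument, "match $\mathbb{K}$ with $\vl$ under Grassmann duality" is a restatement of the goal, not a proof.

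There is also a genuine gap in your route to the first equality of (a). You assert that "$\rk\clar-\rk\vr$ is a single degree-one Gromov--Witten number" by the vanishing used for Proposition \ref{newthing}. But $\rk\clar$ is a classical structure constant of $H^*(\Gr(r+1,r+\ell+2))$, whereas $\rk\vr$ is the coefficient of $q[\operatorname{pt}]$ in a product in $QH^*(\Gr(r+1,r+\ell+1))$ --- \emph{different} Grassmannians (the paper flags this explicitly in Section \ref{leo}). The "classical part plus quantum corrections" decomposition lives inside a single quantum ring and says nothing about the difference of these two numbers; likewise the quantum-to-classical principle (relating GW invariants of $\Gr(k,n)$ to classical intersections on two-step flags) does not, as invoked, produce $\rk\vl$. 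The identity you need is
\begin{equation*}
\langle \sigma_{\lambda_1},\dots,\sigma_{\lambda_n},\sigma_{(\ell,0,\dots,0)}\rangle_1
+\langle \sigma_{\lambda^T_1},\dots,\sigma_{\lambda^T_n},\sigma_{(r,0,\dots,0)}\rangle_1
=\langle \sigma_{\lambda_1},\dots,\sigma_{\lambda_n}\rangle_0 ,
\end{equation*}
with the three terms computed on $\Gr(r+1,r+\ell+1)$, $\Gr(\ell+1,r+\ell+1)$ and $\Gr(r+1,r+\ell+2)$ respectively, and the paper proves it geometrically: interpret $\rk\clar$ as counting rank-$(r+1)$, degree-$0$ subsheaves of $\mathcal{O}^{\oplus N}$ on $\pone$ meeting Schubert conditions, degenerate $\mathcal{O}^{\oplus N}$ to $\mathcal{O}(1)\oplus\mathcal{O}(-1)\oplus\mathcal{O}^{\oplus(N-2)}$, observe that $\quot$ breaks into $\quot_1\cup\quot_2$ with the solutions on the two pieces identified with the counts for $\vr$ and $\vl$ (after a Grassmann dualization on the $\quot_2$ side), and verify conservation of number via properness, smoothness at the solution points, and Bertram-type compactness (Proposition \ref{bertie}). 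Your second equality in (a), via $\Gr(r+1,r+\ell+2)\cong\Gr(\ell+1,r+\ell+2)$ and $c^{\nu}_{\alpha\beta}=c^{\nu^T}_{\alpha^T\beta^T}$, is correct and is what the paper does.
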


Definition \ref{CritLevDefi}, Theorem \ref{newthing}, and Proposition \ref{strange} (b) were discovered by Fakhruddin  for $r=1$ \cite{Fakh}.  He defines the critical level in Section $4.3$ and in Remark $5.3$ he compares the  divisors $$c_1(\mathbb{V}(\sL_{2},(i_1\omega_1,\ldots,i_n\omega_1),\ell)) \ \ \mbox{and } \ c_1(\mathbb{V}(\sL_{\ell+1},(\omega_{i_1},\ldots,\omega_{i_n}),1)),$$
satisfying $\sum_{j=1}^ni_j=2(\ell+1)$, pointing out that the formulas he derives in Proposition $5.2$ and Equation $4.3$ establish that the divisors are the same.

\begin{remark}
Our critical level symmetries are different from, but related to, the strange dualities considered in the literature  (e.g., \cite{Belkale1,MO2,NakTsu,RO}). See Section \ref{strangeandcritical}.
\end{remark}

Proposition \ref{strange} should be thought of as a corollary of the following result.

\begin{theorem}\label{main}
Suppose $\vec{\lambda} \in P_{\ell}(\sL_{r+1})^n$ where
$\ell= c(\sL_{r+1},\vec{\lambda})$. Then, there is a natural isomorphism:
\begin{equation}\label{morpheus}
\clar^*/\vr^*\leto{\sim} \vl.
\end{equation}
\end{theorem}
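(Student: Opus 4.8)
The plan is to produce the isomorphism in \eqref{morpheus} from the enumerative/geometric descriptions of ranks of conformal blocks and coinvariants in terms of (quantum) Schubert calculus on Grassmannians, combined with the duality of Grassmannians. First I would recall, from the material promised in Sections \ref{WittenEnumerative} and \ref{eleven}, that $\rk\vr$ is computed as a quantum structure constant $\langle \sigma_{\lambda_1},\dots,\sigma_{\lambda_n}\rangle^{\operatorname{qh}}_d$ in $\QH(\Gr(r+1,r+1+\ell))$ for the appropriate degree $d$, while $\rk\clar$ is the corresponding \emph{classical} intersection number, i.e.\ the coefficient of the point class in $\sigma_{\lambda_1}\cdots\sigma_{\lambda_n}$ in $H^*(\Gr(r+1,M))$ for $M$ large. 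At the critical level $\ell=c(\sL_{r+1},\vec\lambda)$ the relevant quantum number is a ``one below the top degree'' coefficient: by Proposition \ref{newthing} it vanishes for $\ell>c$, and the degree count $\frac{1}{r+1}\sum|\lambda_i| = \ell+1$ shows that at $\ell=c$ the partition data exactly fills, up to one box-row, the ambient $(r+1)\times(\ell+1)$ rectangle. The key classical input is then the three-term decomposition: in $H^*(\Gr(r+1,M))$ the product $\sigma_{\lambda_1}\cdots\sigma_{\lambda_n}$ expressed in the Schubert basis has, besides the point-class term (which is $\rk\clar$), exactly the ``codimension-one-less'' terms, and summing the quantum degree-$0$ and degree-$1$ contributions reproduces $\rk\clar$. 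This is precisely the content of the rank identity \eqref{morpheusnew} of Proposition \ref{strange}(a), which I would prove first (or quote, if it is established before Theorem \ref{main} in the body) as $\rk\clar = \rk\vr + \rk\vrplusone$, and then observe $\rk\vrplusone = \rk\vl$ by the level-rank duality of \cite{NakTsu} (type $A$ level-rank), since $(\sL_{r+1},\vec\lambda,\ell+1)$ is dual to $(\sL_{\ell+1},\vec\lambda^T, r)$ when $\ell+1 = c+1$.

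Next, to upgrade the numerical identity to a natural isomorphism of vector spaces, I would set up the short exact sequence of $\sL_{r+1}$-representations underlying the explicit quotient description used for the theta level (\cite{GW,FSV1,FSV2,Beauville}): $\vr$ is the quotient of $\clar$ by the image of the ``screening'' or Gauss--Manin-type operators, and dually $\clar^*/\vr^* = (\ker)^*{}$ of the corresponding map, so $\clar^*/\vr^*$ is canonically the subspace of $\clar^*$ that annihilates the kernel of $\clar \twoheadrightarrow \vr$. I would then identify this kernel with the image of a ``degree raising'' map whose source is the dual conformal block at the transposed data: the geometric statement is that the degree-$1$ correction term in the quantum product, under Grassmann duality $\Gr(r+1,M) \leftrightarrow \Gr(M-r-1,M)$ (which sends $\lambda \mapsto \lambda^T$ inside the rectangle), is exactly the degree-$0$ (classical) term on the dual side computing $\rk\vl$. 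Concretely, the plan is to build a pairing $\vr^* \times \vl \to \mathbb{C}$ (or rather a map $\vl \to \clar^*/\vr^*$) out of the residue/coinvariant pairing on $\bigotimes V_{\lambda_i}$ versus $\bigotimes V_{\lambda_i^T}$ together with the strange-duality-style pairing on global sections of theta line bundles over moduli of parabolic bundles (cf.\ Remark \ref{remarkI}(2)), and show it is perfect by a rank count — which is exactly \eqref{morpheusnew}.

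The main obstacle, I expect, is naturality: producing the map in \eqref{morpheus} \emph{canonically}, not merely after choosing bases, and checking it is an isomorphism rather than just an injection/surjection with matching dimensions. The dimension bookkeeping (Proposition \ref{strange}(a) via quantum $=$ classical $+$ one correction, plus level-rank duality) gives the equality of dimensions cheaply once Proposition \ref{newthing} and the standard $A$-type level-rank theorem are in hand; the real work is to write down the arrow. I would do this by factoring through the geometric model: realize $\clar$, $\vr$, and the quotient as spaces of sections of line bundles on a Quot-scheme / moduli of parabolic sheaves on $\mathbb{P}^1$, where the transpose $\vec\lambda\mapsto\vec\lambda^T$ is literally the duality $\mathcal{E}\mapsto \mathcal{E}^\vee$ (twisted), and the subsheaf $\vr^*\subset\clar^*$ corresponds to sections vanishing on the locus of non-trivial (unstable-as-vector-bundle) parabolic structures — exactly the locus that is resolved on the $(\sL_{\ell+1},\vec\lambda^T)$ side. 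Granting the comparison results of Sections \ref{WittenEnumerative}--\ref{eleven} and the quantum$=$classical identity at the critical level, the isomorphism then follows by comparing the two realizations of the same line bundle under Grassmann duality, and Proposition \ref{strange} drops out by taking $c_1$ of \eqref{morpheus} (the images of the two Plücker maps being Grassmann-dual, their hyperplane-section classes agree).
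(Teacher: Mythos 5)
Your overall architecture (rank identity, then a geometric construction of the arrow, then a dimension count) matches the paper's, and you correctly identify several key ingredients: Witten's dictionary, Grassmann duality, the parabolic theta-function model of $\vr^*$, and the fact that the image of $\vr^*$ in $\clar^*$ consists of sections vanishing where the underlying bundle is non-trivial. But your route to the rank identity \eqref{morpheusnew} contains a genuine error. You propose $\rk\clar=\rk\vr+\rk\vrplusone$ together with $\rk\vrplusone=\rk\vl$ via level--rank duality. At the critical level $\ell=c(\sL_{r+1},\vec\lambda)$, Proposition \ref{newthing} gives $\vrplusone=\clar$, so your first identity would force $\rk\vr=0$; and the type $A$ strange duality pairs $(\sL_{r+1},\vec\lambda,\ell+1)$ with $(\sL_{\ell+1},\vec\lambda^T,r+1)$ (both classical at the critical level), not with $(\sL_{\ell+1},\vec\lambda^T,r)$. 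The true statement is that a \emph{classical} number for $\Gr(r+1,r+\ell+2)$ decomposes as a sum of two \emph{degree-one Gromov--Witten numbers for two different, smaller Grassmannians}, $\Gr(r+1,r+\ell+1)$ and $\Gr(\ell+1,r+\ell+1)$; this is not a formal "degree $0$ plus degree $1$" decomposition inside one quantum ring and does not follow from standard quantum Schubert calculus or level--rank duality. The paper obtains it by degenerating the enumerative problem: replace $\mathcal{O}^{\oplus N}$ by $\mathcal{O}(1)\oplus\mathcal{O}(-1)\oplus\mathcal{O}^{\oplus(N-2)}$, show the Quot scheme degenerates flatly into two smooth components meeting in smaller dimension, identify the solutions on each component with the two enumerative problems computing $\rk\vr$ and $\rk\vl$, and verify conservation of number. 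This degeneration is the missing idea in your plan.

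The omission propagates into your second step. The paper does not merely match dimensions: it shows the composite $\vr^*\to\clar^*\to\clal\to\vl$ (built from the classical duality of \cite{PB}) is the \emph{zero} map, and then applies the snake lemma to the two exact sequences to get that \eqref{morpheus} is a surjection of spaces of equal rank, hence an isomorphism. Proving the composite vanishes requires explicit control of the duality pairing on a basis of sections, and that basis is manufactured precisely from the solutions $(\mv_i,\mq_i)$, $(\mv'_a,\mq'_a)$ of the \emph{degenerate} enumerative problem, whose orthogonality $s(x_i,y'_a)=0$, $s(x'_a,y_i)=0$, $s(x_i,y_j)\neq 0$ iff $i=j$ comes from transversality in that degeneration, combined with the theta-section vanishing on non-trivial bundles (your "vanishing locus" observation). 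Without the degeneration you have no such basis, and a pairing "shown perfect by a rank count" cannot distinguish the subspace $\vr^*$ inside $\clar^*$ from any other subspace of the same dimension; in particular it does not produce a \emph{natural} map $\clar^*/\vr^*\to\vl$. Also, a perfect pairing $\vr^*\times\vl\to\Bbb{C}$, as you first write it, would assert $\vr\cong\vl^*$, which is not the content of the theorem.
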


 \begin{remark}
 Proposition \ref{strange} part (a), and Theorem \ref{main}  hold even on $\ovop{M}_{0,3}$,  where the bundles are just vector spaces, giving new statements about their ranks.
\end{remark}

To prove Proposition \ref{strange} and Theorem \ref{main}, structure constants in the cohomology of a Grassmannian are decomposed into a sum of two quantum cohomology
structure constants  for different Grassmannians, yielding \eqref{morpheusnew}  (see Section ~\ref{leo}).

\subsection{A note on our methods} \medskip

The main results of this paper are proved by using relations of conformal blocks to generalized theta functions,
by arguments used in the geometric strange duality theory of vector bundles \cite{Belkale1, MO2, RO}, and in the study of quantum cohomology of Grassmannians (\cite{witten}, also see \cite{belkaleicm}). The applications use standard intersection theoretic computations on $\ovop{M}_{0,\op{n}}$ and the factorization formulas of \cite{TUY}. We recommend the Bourbaki article of Sorger ~\cite{Sorger} for some of the background on conformal blocks.

 In \cite{Fakh}, Fakhruddin gives explicit formulas for the classes $c_1 (\mathbb{V}_{\mg,\vec{\lambda},\ell})$. Explicit formulas for the ranks of
 $\vr$ are also known (the Verlinde formula, see Th{\'e}or{\`e}me 4.2.2 in \cite{Sorger}).  Therefore it is an interesting question to ask if Propositions \ref{newthing} and \ref{strange} can be proved using these explicit formulas. It is not clear however how to obtain these results from formulas (see Remark \ref{comparison_new}) because of difficulties with factorization data and ranks.

  \subsection{Acknowledgements}
We would like to thank N. Fakhruddin, D. Jensen, L. Mihalcea, and  H-B. Moon for useful discussions.   We are grateful to the referees for their comments and suggestions. Proposition \ref{newthing}  and statement (b) of Proposition \ref{strange} are a generalization and sharpening of conjectures made by the second author, and formed the starting point for this work.

\section{Schubert varieties and the cohomology of Grassmannians}\label{added}

\subsection{Notation}\label{notate}
We write $[m]=\{1,\dots,m\}$ for all positive integers $m$.  For an $m$-dimensional vector space  $W$, denote by  $\Fl(W)$ the space of complete flags of vector subspaces of $W$:
$$F_{\bull}:0\subsetneq E_1\subsetneq E_2\subsetneq \dots \subsetneq E_m=W.$$
The determinant line $\wedge^m W$ is denoted by $\det W$.  We fix a collection of $n$ distinct and ordered points $S=\{p_1,\dots,p_n\}\subseteq\pone$, and
for a vector bundle $\mw$ on $\pone$, define $\Fl_S(\mw)=\prod_{p\in S}\Fl(\mw_p).$
If $\me\in \Fl_S(\mw)$, we will assume that it is written in the form
$\me=(E^p_{\bull}\mid p\in S)$.

\subsection{Schubert varieties, and their tangent spaces}\label{twotwoone}
Let
$E_{\bull}\in \Fl(W)$ be a complete flag in an $M=r+1+k$-dimensional vector space $W$. Here we think of  $k$ as related to the level $\ell$.  Suppose $\lambda$ is a Young diagram that fits into a
$(r+1) \times k$ box. It is useful to associate to $\lambda$ an $(r+1)$-element subset $I$ of $[M]$ by
$I=\{i_1<\dots<i_{r+1}\},\  i_a=k+a -\lambda^{(a)},\  i_0=0,\ i_{(r+1)+1}= M$.
The open Schubert cell
$\Omega^o_I(E_{\bull})=\Omega^o_{\lambda}(E_{\sssize{\bullet}})$ is defined to be the smooth
subvariety of the Grassmannian $\Gr(r+1,W)$ of $(r+1)$-dimensional vector
subspaces of $W$ given by
$$ \Omega^o_{\lambda}(E_{\sssize{\bullet}})\ =\ \{ V \in \Gr((r+1),W) \mid \rk(V \cap
E_{j}) = a\text{ for } i_a\leq j< i_{a+1}, a\in[r+1]\}.$$

The closure $\Omega^o_{\lambda}(E_{\sssize{\bullet}})$  is denoted by $\Omega_{\lambda}(E_{\sssize{\bullet}})$. This is the set of $V\in\Gr(r+1,W)$ such that
$\rk(V\cap E_{i_a})\geq a,\  a\in[r+1]$.
The codimension of
$\Omega_{\lambda}(E_{\bull})$  in $\Gr(r+1,W)$ is $|\lambda|$.

Suppose $V\in \Omega^o_{\lambda}(E_{\sssize{\bullet}})$. Let $Q=W/V$. It is easy to see that  $V$ and $Q$ each receive induced flags from $E_{\bull}$; denote these by $F_{\bull}$
and $G_{\bull}$.
Then $T\Omega^o_{\lambda}(E_{\sssize{\bullet}})_V\subseteq T\Gr(r+1,W)_V=\operatorname{Hom}(V,Q)$ is
\begin{equation}\label{added2}T\Omega^o_{\lambda}(E_{\sssize{\bullet}})_V= \{\phi\in \operatorname{Hom}(V,Q)\mid \phi(F_a)\subseteq G_{k-\lambda^{(a)}}, a\in[r+1]\}.
\end{equation}

\subsection{Grassmann duality}\label{grdual} Let $W$ be an $M=r+1 +k$ dimensional vector space.  A natural identification $\Gr(r+1,W)=\Gr(k,W^*)$ takes a subspace $V\subseteq W$ to the kernel of the surjective map $W^*\to V^*$.  There is similarly an identification of the complete flag varieties $\Fl(W)$ and $\Fl(W^*)$. A Schubert variety $\Omega_{\lambda}(E_{\bull})\subseteq\Gr(r+1,W)$ can be identified with $\Omega_{\lambda^T}(E'_{\bull})\subseteq\Gr(k,W^*)$, where $E_{\bull}$ and $E'_{\bull}$ correspond under the identification of $\Fl(W)$ and $\Fl(W^*)$. A natural group isomorphism $\GLL(W)\to\GLL(W^*)$ acts equivariantly on the above identifications.

Let $\lambda$ be an Young diagram which fits into a $(r+1) \times k$ box.   Let $\lambda^c$ be the complement of the Young diagram $\lambda$. We consider the line bundles $\mathcal{L}_{\lambda^c}$ and $\mathcal{L}_{\lambda}$ on $\Fl(W)$ and $\Fl(W^*)$ as defined in Section ~\ref{classic}.
The line bundle $\mathcal{L}_{\lambda}$ pulls back to $\mathcal{L}_{\lambda^c}\tensor \det(W^*)^{k}$ under the natural map from $\Fl(W)$ to $\Fl(W^*)$, equivariantly under the action of groups
$\SLL(W)\to \SLL(W^*)$, where we note that the line bundle on $\Fl(W)$ with fiber $\det(W)$ is trivial as an equivariant  $\SLL(W)$ line bundle.

\subsection{Classical cohomology of the Grassmannian}
Recall  that the cohomology $H^*(X,\Bbb{Z})$ of a Grassmannian $X=\Gr(r+1,M)=\Gr(r+1,\Bbb{C}^M)$, $M=r+1+\ell$ is a commutative and associative ring,  with an additive   basis of  cycle classes  $\sigma_{\mu}$ of Schubert varieties $\Omega_{\mu}(E_{\bull})$. Here $\mu$ runs through all Young diagrams which fit into a $(r+1)\times \ell$ grid ($\mu$ need not be normalized).
Note that $\sigma_{\mu}\in H^{2|\mu|}(X,\Bbb{Z})$. The class of a point $[\operatorname{pt}]$ is $\sigma_{(\ell,\ell,\dots,\ell)}$.

The coefficient of $\sigma_{\lambda^c_n}$
($\lambda_n^c$ is the complement of $\lambda_n$ in a $(r+1)\times \ell$ box in the  product
$$\sigma_{\lambda_1}\cdot\sigma_{\lambda_2}\dots\cdot\sigma_{\lambda_{n-1}} \in H^*(X)=H^*(X,\Bbb{Z})$$
has the following enumerative interpretation: Pick  a general element $(E^1_{\bull},\dots,E^n_{\bull})$  of $\Fl(W)^n$
where $W=\Bbb{C}^M$, and count number of points in the intersection
$$\cap_{i=1}^{n}\Omega_{\lambda_i}(E^i_{\bull}).$$
One counts this as zero if this number is infinite.

  \subsection{Quantum Cohomology of the Grassmannian}\label{rewind}
We suggest \cite{Bertram} for the basic notions of quantum cohomology. As an abelian group, the quantum cohomology group of  $X=\Gr(r+1,M)=\Gr(r+1,\Bbb{C}^M), M=(r+1)+k$ is
 $$QH^*(X)=QH^*(X,\Bbb{Z})=H^*(X,\Bbb{Z})\tensor\Bbb{Z}[q].$$
 The multiplication in this graded commutative and associative ring is $q$-linear ($\sigma_\nu$ has degree $|\nu|$ and $q$ has degree $M$). The coefficient of $q^d\sigma_{\lambda^c_n}$ ($\lambda_n^c$ is the complement of $\lambda_n$ in a $(r+1)\times k$ box) in the quantum product
$$\sigma_{\lambda_1}\star\sigma_{\lambda_2}\star\dots\star\sigma_{\lambda_{n-1}} \in QH^*(X)$$
has the following enumerative interpretation: Fix distinct points $p_1,\dots,p_n\in\pone$, and  a general element $(E^1_{\bull},\dots,E^n_{\bull})$  of $\Fl(W)^n$
where $W=\Bbb{C}^M$, and count the number of maps 
$f:\pone\to X$ of degree $d$ so that
$$f(p_i)\in \Omega_{\lambda_i}(E^i_{\bull})\subseteq X, \forall i\in[n],$$  where if this number is infinite, the count is considered zero.
 Setting $q=0$ in the product $\sigma_{\mu}\star \sigma_{\nu}$, we recover the classical product $\sigma_{\mu}\cdot\sigma_{\nu} \in H^*(X)$.

\section{Witten's theorem}\label{WittenEnumerative}

\begin{defi}\label{evenlySplit}
 A vector bundle $\mw$ on $\pone$ is said to be evenly split
if $\mathcal{W}=\oplus_{j=1}^N\mathcal{O}_{\Bbb{P}^1}(a_j)$  with
$|a_i-a_j| \leq 1$ for $0<i<j\leq N$.
\end{defi}

\begin{remark}It is easy to see that there are evenly split bundles of every degree and (non-zero) rank. These bundles are generic bundles of the given degree and rank.
\end{remark}

\begin{theorem}[Witten's Dictionary]\label{BelkaleWitten}  Given $\vec{\lambda}\in P_{\ell}(\sL_{r+1})^n$, write $\sum_{i=1}^n|\lambda_i|=(r+1)(\ell+s) \in \Bbb{Z}$.
Let $\mw$ be an evenly split vector bundle of rank $r+1+\ell$ and degree $s$, and $\me\in \Fl_S(\mw)$ a general point.
Then $\op{rk}\Bbb{V}_{\sL_{r+1},\vec{\lambda},\ell}$ is equal to the cardinality of the set
 $$\left\{\mbox{ subbundles } \mathcal{V} \subset \mathcal{W} :  \begin{array}{lll}
& \operatorname{deg}(\mathcal{V})=0, \ \ \operatorname{rk}(\mathcal{V})=r+1, \mbox{ and } \\
& \mv_{p_i}\in\Omega_{\lambda_i}(E^{p_i}_{\bull})\subseteq \Gr(r+1,\mw_{p_i}), \ i\in[n]
\end{array}\right\},$$
\noindent
which is finite, by Kleiman transversality (see Section 2.4 of \cite{b4} for the application of Kleiman's theorem \cite{Kleiman}).
\end{theorem}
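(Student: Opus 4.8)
The plan is to identify the right-hand side count with a structure constant in the quantum cohomology of the Grassmannian $X = \Gr(r+1, \Bbb{C}^M)$, $M = r+1+\ell$, and then invoke Witten's theorem identifying that quantum structure constant with $\op{rk}\Bbb{V}_{\sL_{r+1},\vec{\lambda},\ell}$. The translation between "subbundles of a fixed bundle $\mw$ on $\pone$ with prescribed Schubert-position conditions at the marked points" and "degree-$d$ maps $\pone \to X$ satisfying incidence conditions" is the standard dictionary from the theory of the Quot scheme: a subbundle $\mv \subset \mw$ of rank $r+1$ and degree $0$, with $\mw$ of degree $s$, gives a map $\pone \to X = \Gr(r+1,\mw_{\text{gen}})$ once we trivialize — but since $\mw$ is not trivial, one instead works with the relative Grassmannian bundle $\Gr(r+1,\mw) \to \pone$ and its sections, and the degree of the induced classifying map to $X$ is controlled by $\deg\mw = s$ together with $\deg\mv = 0$. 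So first I would set up this correspondence precisely: sections of $\Gr(r+1,\mw)\to\pone$ of the appropriate type are in bijection with subbundles $\mv$ of the stated numerical type, and the fiberwise Schubert conditions $\mv_{p_i}\in\Omega_{\lambda_i}(E^{p_i}_\bull)$ match exactly the incidence conditions defining the quantum structure constant.

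Second, I would pin down the degree $d$ of the associated maps. Here the arithmetic $\sum_i |\lambda_i| = (r+1)(\ell + s)$ is the key bookkeeping identity: the expected number of conditions $\sum |\lambda_i|$ must equal $\dim X + d\cdot M = (r+1)\ell + d(r+1+\ell)$ for the count to be finite and enumerative, which forces $d = s$ (after matching $(r+1)(\ell+s) = (r+1)\ell + s(r+1+\ell)$ — wait, this needs $s\ell = s\ell$, consistent). More carefully, one tracks that a subbundle $\mv$ of degree $0$ inside $\mw$ of degree $s$ corresponds to a section of $\Gr(r+1,\mw)$ whose composite with any local trivialization has degree exactly $s$; so $d = s$ throughout. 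Then by the enumerative description of quantum cohomology recalled in Section~\ref{rewind}, the number of such maps (for generic flags $E^i_\bull$) is the coefficient of $q^s \sigma_{\lambda_n^c}$ in $\sigma_{\lambda_1}\star\cdots\star\sigma_{\lambda_{n-1}}$, provided the flags at the $p_i$ are in general position, which is exactly the hypothesis that $\me \in \Fls(\mw)$ is a general point.

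Third, I would invoke the content of Witten's theorem in the form already available in the literature (\cite{witten}, \cite{agni}, and as used in \cite{belkaleicm}): that this particular quantum structure constant equals $\op{rk}\Bbb{V}_{\sL_{r+1},\vec{\lambda},\ell}$. Equivalently, if the theorem statement is to be self-contained, I would note that this is precisely the Verlinde-type identification of the Gepner/Witten presentation of the fusion ring with the quantum cohomology ring of the Grassmannian, so that the rank of the conformal block, which is a fusion coefficient, becomes the quantum product coefficient. Finiteness of the set — hence that it is legitimately a "cardinality" rather than being declared zero — follows from Kleiman transversality applied to the general translates $\Omega_{\lambda_i}(E^{p_i}_\bull)$ inside the Grassmannian bundle, exactly as cited from Section~2.4 of \cite{b4}; genericity of $\me$ is what makes Kleiman's theorem apply.

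The main obstacle, and the step deserving the most care, is the precise matching of degrees and the verification that the "subbundle" count and the "stable map / Quot scheme point" count coincide on the nose (not merely up to components at the boundary). The subtlety is that a rank-$(r+1)$, degree-$0$ subsheaf of $\mw$ need not be a subbundle everywhere, and conversely a section of $\Gr(r+1,\mw)$ produces a subbundle; one must check that for general $\me$ all the relevant subsheaves occurring in the count are actually subbundles and that no lower-degree or degenerate contributions appear — again this is where Kleiman transversality and the evenly-split (hence generic) choice of $\mw$ do the work, ensuring the intersection is transverse, zero-dimensional, and reduced, so that the set-theoretic cardinality equals the enumerative (quantum) count.
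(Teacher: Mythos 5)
Your reduction to quantum cohomology of $X=\Gr(r+1,r+1+\ell)$ breaks down at the degree-matching step, and the arithmetic ``consistency check'' you ran actually fails. For the Gromov--Witten count in degree $d$ on $X$ to be zero-dimensional one needs $\sum_i|\lambda_i|=\dim X+dM=(r+1)\ell+d(r+1+\ell)$; with $\sum_i|\lambda_i|=(r+1)(\ell+s)$ this forces $(r+1)s=d(r+1+\ell)$, i.e.\ $d=(r+1)s/(r+1+\ell)$, which is not $s$ and is typically not even an integer. (Your parenthetical check reduces to $s\ell=0$, not to a tautology.) Consequently the set in the theorem is \emph{not} the coefficient of $q^s\sigma_{\lambda_n^c}$ in $\sigma_{\lambda_1}\star\cdots\star\sigma_{\lambda_{n-1}}$: counting degree-$0$ subbundles of a degree-$s$ evenly split bundle is a genuinely different enumerative problem from counting degree-$s$ maps to $X$ (equivalently, subbundles of the \emph{trivial} bundle), and the two cannot be identified by ``trivializing locally.'' A correct cohomological translation of the rank --- the one given in Section~\ref{quantumrule} --- is the coefficient of $q^s[\operatorname{pt}]$ in $\sigma_{\lambda_1}\star\cdots\star\sigma_{\lambda_n}\star\sigma_{(\ell,0,\dots,0)}^{\star s}$, with $s$ extra insertions of $\sigma_{(\ell,0,\dots,0)}$ contributing the missing codimension $s\ell$; these insertions are the shadow of the cyclic twist (spectral flow) relating the trivial-bundle problem to the evenly split degree-$s$ bundle problem. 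Your proposal omits this twist entirely, so the quantity you would compute is not $\operatorname{rk}\Bbb{V}_{\sL_{r+1},\vec{\lambda},\ell}$.

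What is missing, then, is precisely the content that the paper delegates to \cite[Theorem 3.6, Eq.\ (3.10), Remark 3.8]{b4}: a Hecke-modification/elementary-transformation argument identifying subbundles of $\mathcal{O}^{\oplus M}$ of degree $-s$ satisfying twisted Schubert conditions with degree-$0$ subbundles of the evenly split bundle of degree $s$ satisfying the conditions $\Omega_{\lambda_i}(E^{p_i}_{\bull})$, together with the matching shift of the weights on the conformal-block side. Without that step (or a direct argument \`a la Lesieur as in Section~\ref{class}, which is how \cite{b4} actually proceeds), invoking ``Witten's theorem from the literature'' does not close the loop, because the literature statement computes the fusion coefficient at the wrong degree for general $s$. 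Your remarks about finiteness, transversality, and subsheaves versus subbundles are sound in spirit --- that is exactly what Kleiman transversality and Proposition~\ref{bertie} handle --- but they cannot repair the incorrect identification of the enumerative problem itself.
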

\begin{remark}
See \cite [Theorem 3.6, Eq (3.10) and Remark 3.8]{b4} for a proof of Witten's dictionary, modeled on the proof of its classical counterpart described just after Remark \ref{classicalRk} below (cf. \cite[Section 6.2]{fulton}). 
\end{remark}

\begin{remark}
In the original form of Witten's Dictionary one has quantum cohomology structure coefficients on one side (at any degree $d$), and the ranks of Verlinde bundles at degree $-d$ (the underlying bundle  of parabolic bundles have degree $-d$ which may not be zero). We choose to (cyclically) twist both sides so that the plain degree of the parabolic bundles is now zero and give conformal blocks as in our paper (~\cite{b4} explains how to twist the enumerative side so that instead of counting subbundles of the trivial bundle (as in quantum cohomology), one counts subbundles of arbitrary evenly split bundles).
\end{remark}

\subsection{Witten's Dictionary, classical analogue}\label{classicalRk}\label{class}
The classical counterpart of Witten's Dictionary, which goes back at least to L. Lesieur \cite{Le} (also \cite[Section 6.2]{fulton})  is the following. Let $\lambda_1,\dots,\lambda_n$  be Young diagrams that each fit into a $(r+1) \times \ell$ box (not necessarily normalized), and $V_{\lambda_i}$ the irreducible finite dimensional representation of $\frg$ with highest weight $\lambda_i$.
Suppose further that $\sum_{i=1}^n |\lambda_i|= (r+1) \ell$. Then the dimension of the vector space of co-invariants
$(V_{\lambda_1}\tensor \dots\tensor V_{\lambda_n})_{\sL_{r+1}}$ equals the number of points in
$$\bigcap_{i=1}^n\Omega_{\lambda_i}(E^i_{\bull})\subseteq \Gr(r+1,W),$$
where $W$ is a vector space of dimension $(r+1)+\ell$, and $(E^1_{\bull},\dots,E^n_{\bull})$ is a general element of $\Fl(W)^n$.  We often refer to this dimension as the rank of the constant bundle 
$\mathbb{A}_{\sL_{r+1},\vec{\lambda}}=(V_{\lambda_1}\tensor \dots\tensor V_{\lambda_n})_{\sL_{r+1}}\times \ovop{M}_{0,\op{n}}$ introduced earlier.

We note that the dimension of the vector space of classical co-invariants above also equals the multiplicity of the $\sL_{r+1}$ (or $\operatorname{GL}_{r+1}$) representation $V_{\lambda_n^c}$
in the tensor product $V_{\lambda_1}\tensor\dots\tensor V_{\lambda_{n-1}}$. Here $\lambda_n^c$ is the complement of the Young diagram $\lambda_n$ in a
$(r+1)\times \ell$ grid (and flipped over) and corresponds to the dual of $V_{\lambda_n}$ as a $\sL_{r+1}$ representation. Note that $r+1$ and $\ell$ are  fine-tuned to the
data of $\lambda_i$ by our assumption that $\sum_{i=1}^n |\lambda_i|= (r+1) \ell$.

\subsection{Cohomological form of Witten's dictionary}\label{quantumrule}\label{eleven}

To determine the rank of a conformal block $\vr$ in terms of quantum cohomology of Grassmannians, write
$\sum |\lambda_i| =(r+1)(\ell +s)$, and by \cite[Section 3.5]{b4}, one proceeds as follows.

 If $s<0$, then the rank of $\vr$ coincides with the rank of classical coinvariants $\clar$.

If $s\geq 0$, let $\lambda=(\ell,0,\dots,0)$. The rank of $\vr$ is the coefficient of $q^{s}[\operatorname{pt}]=q^s\sigma_{(\ell,\ell,\dots,\ell)}$  in
$$\sigma_{\lambda_1}\star\dots\star\sigma_{\lambda_n}\star\sigma_{\lambda}^{s}\in QH^*(Y),\  Y=\Gr (r+1, r+1+\ell),$$
where  $\sigma_{\lambda}^{s}$ is the $s$-fold quantum $\star$ product of $\sigma_{\lambda}$.
We can write the above multiplicity also as the coefficient of $q^s\sigma_{\lambda^c_n}$ ($\lambda^c_n$ is the complement of $\lambda_n$ in a $(r+1)\times \ell$ box) in
$$\sigma_{\lambda_1}\star\sigma_{\lambda_2}\star\dots\star\sigma_{\lambda_{n-1}}\star\sigma_{\lambda}^{s}\in QH^*(Y).$$

\begin{remark} \label{notateo} The coefficient of of $q^{s}[\operatorname{pt}]$  in the quantum product 
$\sigma_{\lambda_1}\star\dots\star\sigma_{\lambda_n}$ is denoted by
$\langle \sigma_{\lambda_1},\sigma_{\lambda_2},\dots,\sigma_{\lambda_n}\rangle_s$.
\end{remark}

  \section{The proof of vanishing above the critical level}\label{newthingproof}
We now prove Proposition ~\ref{newthing}.
 Let $\tilde{\ell}=\ell(\sL_{r+1},\vec{\lambda})+1$. We divide the proof into two cases:
 \begin{enumerate}
 \item [(1)] $\lambda_1,\dots,\lambda_n$ are in $P_{\tilde{\ell}}(\sL_{r+1})$, 
 \item[(2)] Some $\lambda_i$, say $\lambda_1$, is not in $P_{\tilde{\ell}}(\sL_{r+1})$.
 \end{enumerate}
In case (1), it suffices to prove  that $\rk \Bbb{V}_{\sL_{r+1},\vec{\lambda},\tilde{\ell}}= \rk\clar$. The enumerative translation of $\rk \Bbb{V}_{\sL_{r+1},\vec{\lambda},\tilde{\ell}}$ is the following (see Section \ref{WittenEnumerative}): Let $\mw$ be an evenly split  bundle of rank $N=r+\tilde{\ell}+1$ and degree $s=0$, i.e., $\mw=\mathcal{O}^{\oplus N}=W\tensor \mathcal{O}$ for an $N$-dimensional vector space $W$. Pick a general point $\me\in \Fl_S(\mw)$ (see Section \ref{notate}). The enumerative problem is to count (a finite list by Kleiman-transversality) subbundles $\mv$ of $\mw$ of degree $0$ and rank $r+1$ such that
$$\mv_{p_i}\in\Omega_{\lambda_i}(E^{p_i}_{\bull})\subseteq \Gr(r+1,\mw_{p_i}),\ i\in[n].$$
Any such subbundle is clearly trivial, that is, of the form $V\tensor \mathcal{O}$ for some $r+1$-dimensional subspace $V\subseteq W$, and our count therefore equals the  number of points in
$$\bigcap_{i=1}^n\Omega_{\lambda_i}(E^i_{\bull})\subseteq \Gr(r+1,W),$$
where $\rk W=N$, and $(E^1_{\bull},\dots,E^n_{\bull})$ is a general element of $\Fl(W)^n$. By Section \ref{class}, this classical count is $\rk\clar$ and we are done.
\begin{remark}\label{ClassicalRank}
We could have argued cohomologically as follows: The rank of $\Bbb{V}(\sL_{r+1},\vec{\lambda},\tilde{\ell})$ is, by Section \ref{quantumrule},
the coefficient of $[\operatorname{pt]}$ in the quantum product
$$\sigma_{\lambda_1}\star\sigma_{\lambda_2}\star\dots\star\sigma_{\lambda_{n}}\in QH^*(Y),\  Y=\Gr(r+1,r+1+\tilde{\ell}),$$
since $s=0$ in our case. This is the classical part of quantum cohomology, and our coefficient equals rank of $\clar$, as desired.
\end{remark}

In case (2), we claim that  $\clar$ is zero, and hence all $\sL_{r+1}$ conformal blocks bundles and divisors for $\vec{\lambda}$ are zero.  Assume that $k=\lambda^{(1)}_1>\tilde{\ell}$ is the maximum of the $\lambda^{(1)}_i$ and write
$\sum_{i=1}^n|\lambda_i|=(r+1)k-p(r+1)$ so that $\lambda_i$ fit into boxes of size $(r+1)\times k$ and $p>0$.
Therefore if we set $\mu_{1}=\dots=\mu_{p}=(1,1,\dots,1)$ then $\sum_{i=1}^{n}|\lambda_i|+\sum_{j=1}^p|\mu_j|=(r+1)k$.
The representation $V_{(1,\dots,1)}$ of $\operatorname{GL}(r+1)$ is trivial as a representation of $\sL_{r+1}$. Therefore applying the classical theorem relating intersection numbers for
$X=\Gr(r+1,r+1+k)$ and invariants for $\sL_{r+1}$, we find that the rank of $\clar$ equals the multiplicity of the class of a point in the cup product
$$\sigma_{\lambda_1}\cdots\sigma_{\lambda_n}\cdot\sigma_{(1,1,\dots,1)}^p\in H^*(X).$$

But it is easy to see that the cup product $\sigma_{\lambda_1}\cdot\sigma_{(1,1,\dots,1)}=0\in H^*(X)$: If $V\in \Omega_{\lambda_1}(F_{\bull})\cap  \Omega_{(1,1,\dots,1)}(G_{\bull})$ with $F_{\bull},G_{\bull}$ in general position, then $V\subseteq G_{r+k}$ and $V\supseteq F_1$ (since $\lambda_1^{(1)}=k$). But $F_1$ is not a subset of $G_{r+k}$, therefore no such $V$ exists and hence the the cup product is zero. 

(We could have argued also as follows: According to the  classical Pieri rule (see e.g., \cite{fulton}), $\sigma_{\lambda}\cdot \sigma_{b,0,\dots,0}$ is a sum $\sum \sigma_{\mu}$ over $\mu$  which can obtained from $\lambda$ by addition of $b$ boxes, no two in the same column. Since the first column of $\lambda_1^T$ is already of the full length $k$,  $\sigma_{\lambda^T_1}\cdot\sigma_{(r+1,0,\dots,0)}=0\in H^*(\Gr(k,r+1+k))$, and the desired vanishing holds by Grassmann duality.)

\subsection{Parabolic semistability}\label{Z}
The duals of spaces of conformal blocks $\vr$ over points of $\operatorname{M}_{0,n}$ can be identified with  global sections of certain line bundles on suitable moduli spaces
of semi-stable parabolic bundles on $\pone$. We note that it is not necessary (but  perhaps sufficient) for all parabolic semistable bundles to have trivial underlying bundles for $\Bbb{V}_{\sL_{r+1},\vec{\lambda},\ell}$ to be equal to the trivial bundle $\clar$ (see Remark \ref{finale} for an example). As has been pointed out to us by an anonymous referee, it is instructive to determine conditions on $\ell$ so that
any semistable parabolic bundle $\mv$ with weights $|\lambda_i|/\ell$ has the trivial splitting type. If the underlying bundle $\mv$  has $\mathcal{O}(a)$ (with $a>0$) as a direct summand, and  $\mathcal{O}(a)$ meets the flags of $\mv$ generically, and for it to contradict semistability (by an easy calculation),
$1 +\frac{1}{1}\cdot 0>0+\frac{1}{(r+1)}\cdot\sum\frac{|\lambda_i|}{\ell}$,
or that $\ell> \frac{1}{r+1}\sum|\lambda_i| =c(\sL_{r+1},\vec{\lambda})+1$.

This parabolic method misses  $\ell=c(\sL_{r+1},\vec{\lambda})+1$: Proposition \ref{newthing} says that  $\Bbb{V}_{\sL_{r+1},\vec{\lambda},\ell}$ coincides with coinvariants for $\ell=c(\sL_{r+1},\vec{\lambda})+1$
whereas this argument is inconclusive. Therefore, the critical level bound is strictly better that the bound obtained via parabolic semistability by one.   In fact, for $\ell=c(\sL_{r+1},\vec{\lambda})+1$ there
are semistable parabolic bundles with non-trivial splitting type (see e.g., Example (2) in Table \ref{Table}, and Remark ~\ref{finale}). Theorem \ref{main} and many others require the critical level as defined, and the connection of the critical level to quantum cohomology is decisive. However, we believe such a parabolic semistability argument should give bounds for all Lie algebras $\frg$, and should be further developed.

We ask the following natural question:
 \begin{question}\label{GenCritLev}  Given a triple $(\mg,\vec{\lambda},\ell)$, with $\vec{\lambda} \in P_{\ell}(\mg)^n$, what are necessary and sufficient conditions so that $\rk \mathbb{V}_{\mg,\vec{\lambda},\ell}=\rk\mathbb{A}_{\mg,\vec{\lambda}}$?
\end{question}
This answer must be subtle.  For example, Corollary~\ref{strange} says that it is sufficient to take $\ell$ greater than the critical level, but that if $\ell$ is equal to the critical level to have this rank identity
it is necessary that the critical level partner bundle has rank zero.   A look at the first four lines of Table~\ref{Table} shows that sometimes this happens, while other times it does not. Another sufficient condition is given by the theta level (Remark \ref{ThetaVanishing}).

\section{Application One: Extremality of the $\mathbb{D}=\mathbb{D}_{\frg,\vec{\lambda},\ell}$ in the Nef Cone}
In this section we give
criteria (in Propositions ~\ref{extremal1} and ~\ref{extremal2}) which come from Corollary ~\ref{strange}, to detect certain so-called $\operatorname{F}$-curves (see Definition ~\ref{FCurve}) that get contracted by the associated conformal blocks maps $\phi_{\mathbb{D}}$.  This enables us to show in Section \ref{H} that the morphisms $\phi_{\mathbb{D}}$ factor through certain Hassett contractions.

\subsection{Notation}
\begin{definition}\label{FCurve}Fix a partition of $[n]=\{1,\ldots,n\}$ into four nonempty sets $N_1$, $N_2$, $N_3$, $N_4=[n]\setminus (N_1 \cup N_2 \cup N_3)$, and consider the
 morphism
$\overline{\operatorname{M}}_{0,4} \longrightarrow \overline{\operatorname{M}}_{0,\op{n}}$,   where $(C, (a_1,a_2,a_3,a_4))\mapsto (X, (p_1,\ldots,p_n))$,
where $X$ is the nodal curve obtained as follows.  If $|N_i| \ge 2$, then one glues a copy of $\mathbb{P}^1$ to the spine $(C, (a_1,a_2,a_3,a_4))$ by attaching a point
$(\mathbb{P}^1, \{p_j : j \in N_i\} \cup \{\alpha_i\} ) \in M_{0,|N_i|+1}$
 to $a_i$ at $\alpha_i$.  If $|N_i|=1$, one does not glue any curve at the point $a_i$, but instead labels $a_i$ by $p_i$.    We refer to any element of the numerical equivalence class of the image of this morphism  the $\operatorname{F}$-Curve $F(N_1,N_2,N_3)$ or by $F(N_1,N_2,N_3,N_4)$, depending on the context.
\end{definition}

\begin{conjecture}[The $\operatorname{F}$-Conjecture]\label{F}
A divisor $D$ on $\overline{\operatorname{M}}_{0,\op{n}}$ is nef if and only if it non-negatively intersects all $\operatorname{F}$-curves.
\end{conjecture}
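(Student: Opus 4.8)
The statement as phrased is the \textbf{$\operatorname{F}$-Conjecture}, one of the central open problems concerning $\ovop{M}_{0,\op{n}}$; what follows is therefore a strategy rather than a complete argument, together with an indication of where the genuine difficulty lies. One implication is immediate: a nef divisor non-negatively intersects \emph{every} irreducible curve, hence in particular every $\operatorname{F}$-curve. All the content is in the converse, which is equivalent to the assertion that the classes of the $\operatorname{F}$-curves span the Mori cone $\overline{\operatorname{NE}}_1(\ovop{M}_{0,\op{n}})$ --- equivalently, that the polyhedral cone $\operatorname{F}$ they generate is dual to $\operatorname{Nef}(\ovop{M}_{0,\op{n}})$. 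So the plan is: (i) fix a divisor $D$ with $D\cdot F\ge 0$ for every $\operatorname{F}$-curve $F$; (ii) observe that $D$ then lies in the cone dual to $\operatorname{F}$; and (iii) prove that this dual cone is exactly $\operatorname{Nef}(\ovop{M}_{0,\op{n}})$, i.e.\ that every extremal ray of $\overline{\operatorname{NE}}_1$ is generated by (a numerical multiple of) an $\operatorname{F}$-curve.

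Step (iii) is the whole problem, and the standard approach to it is induction on $n$ using the boundary and the forgetful morphisms. One would invoke a Keel--McKernan type reduction: a curve $B$ contained in a boundary divisor of $\ovop{M}_{0,\op{n}}$ lives in a product of smaller moduli spaces $\ovop{M}_{0,m}$, inside which the $\operatorname{F}$-curves restrict to $\operatorname{F}$-curves, so such $B$ is handled by the inductive hypothesis; thus it suffices to treat curves \emph{not} contained in $\partial\ovop{M}_{0,\op{n}}$. One would also pass to the $S_n$-equivariant formulation, averaging over the symmetric group so that only symmetric divisors $D$ need be tested, and use results of Gibney--Keel--Morrison type to transfer between the nef cone of $\ovop{M}_{g,n}$ and that of $\ovop{M}_{0,N}$. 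The remaining task is then to degenerate an arbitrary ``interior'' curve into the boundary while keeping control of its numerical class, or to show directly that its intersection numbers against the boundary divisors and $\psi$-classes already exhibit it as a non-negative combination of $\operatorname{F}$-curves.

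The main obstacle --- and the reason the conjecture is open for $n\ge 8$ --- is precisely that there is no known mechanism forcing an extremal curve that moves in, or is disjoint from, the boundary to be numerically an $\operatorname{F}$-curve: the combinatorics of the required case analysis for such interior curves grows explosively with $n$ (Keel--McKernan carried it out for $n\le 7$), and the recent result that $\ovop{M}_{0,\op{n}}$ fails to be a Mori Dream Space for $n\ge 13$ \cite{MDSBS,GoKa} shows the birational geometry is genuinely intricate (note, though, that the $\operatorname{F}$-Conjecture asserts only that the \emph{dual} of the nef cone is the finitely generated $\operatorname{F}$-cone, which is not contradicted by non-finite-generation of other cones). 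A line of attack natural to the present paper would be to exploit the large supply of nef conformal blocks divisors: if one could show that the conformal blocks divisors, together with the duals of the $\operatorname{F}$-curves, already carve out the cone $\operatorname{F}^\vee$ and that no nef class lies outside it, the conjecture would follow --- but, as the authors emphasize, the conformal blocks divisors are only known to span a \emph{dense} subcone of $\operatorname{Nef}$, and whether they exhaust it is itself open.
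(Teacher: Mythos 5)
The statement you were asked about is stated in the paper as Conjecture \ref{F} precisely because it is an open problem: the paper offers no proof of it, and none exists in the literature for general $n$. You have correctly recognized this. The only provable part is the forward implication (a nef divisor meets every irreducible curve, hence every $\operatorname{F}$-curve, non-negatively), which you dispatch correctly; your reformulation of the converse as the assertion that the $\operatorname{F}$-curve classes span the closed cone of curves, your account of the Keel--McKernan reduction to curves not contained in the boundary and the verification for $n\le 7$, and your remark that failure of the Mori Dream Space property for $n\ge 13$ does not contradict the conjecture are all accurate as a description of the state of the art, but, as you yourself acknowledge, they do not close the converse direction. There is no gap to point to beyond the one you already name: no mechanism is known that forces an extremal curve meeting the interior to be numerically an $\operatorname{F}$-curve. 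Since the paper itself only states the conjecture (using it as motivation for its extremality results, Propositions \ref{extremal1} and \ref{extremal2}, which are logically independent of it), your treatment is the appropriate one.
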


\subsection{Contraction results}
\begin{proposition}\label{extremal1}
 Suppose that  $r\ge 1$, $\ell \ge 1$, and let  $\vec{\lambda}=(\lambda_1,\ldots,\lambda_n)$ be an $n$-tuple in  $P_{\ell}(\sL_{r+1})$.
 Let $N_1$,$N_2$,$N_3$,$N_4$ be any partition of $[n]$ into four nonempty subsets, ordered so that, without loss of generality, if $\lambda(N_j)=\sum_{i\in N_j}|\lambda_i|$, then
$\lambda(N_1) \le \cdots \le \lambda(N_4)$.
  If
 $\sum_{j \in \{1,2,3\}}\lambda(N_j) \le r+\ell$,
 then the (possibly constant) morphism $\phi_{\mathbb{D}}$ given by the divisor
 $\mathbb{D}=\mathbb{D}_{\sL_{r+1},\vec{\lambda},\ell}$ contracts  $F(N_1,N_2,N_3)$ and in particular,  $\mathbb{D}$ is extremal in the nef cone.
  \end{proposition}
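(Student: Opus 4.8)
The plan is to compute the intersection number $\mathbb{D}\cdot F(N_1,N_2,N_3,N_4)$ directly and show it is zero; extremality then follows immediately, since $\mathbb{D}$ is nef and $F(N_1,N_2,N_3)$ is a nonzero curve class, so $\mathbb{D}$ lies on the proper face of $\operatorname{Nef}(\overline{M}_{0,n})$ cut out by this curve (and $\phi_{\mathbb{D}}$ contracts it precisely because $\mathbb{D}\cdot F=0$). To compute the intersection I restrict $\mathbb{V}=\mathbb{V}_{\sL_{r+1},\vec\lambda,\ell}$ to the $F$-curve, which is the image of a gluing map $f\colon\overline{M}_{0,4}\to\overline{M}_{0,n}$ attaching, at the $i$-th special point of the varying $4$-pointed curve, a fixed rational tail carrying the marked points $\{p_j:j\in N_i\}$ (for $|N_i|=1$, say $N_i=\{j\}$, no tail is attached). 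By the factorization rules of \cite{TUY} (see also \cite{Fakh}) applied at the four separating nodes,
$$f^{*}\mathbb{V}\ \cong\ \bigoplus_{\vec\mu\in P_\ell(\sL_{r+1})^4}\ \mathbb{V}_{\sL_{r+1},(\mu_1,\mu_2,\mu_3,\mu_4),\ell}\ \otimes\ \bigotimes_{i=1}^{4}\mathbb{V}_{\sL_{r+1},(\{\lambda_j\}_{j\in N_i},\,\mu_i^{*}),\ell},$$
where node labels are chosen so that the tails carry the $\mu_i^{*}$ and the spine the $\mu_i$, and the tail factors are fixed vector spaces (when $|N_i|=1$ the $i$-th factor imposes $\mu_i=\lambda_j$). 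Writing $m_{\vec\mu}:=\prod_{i=1}^4\rk\,\mathbb{V}_{\sL_{r+1},(\{\lambda_j\}_{j\in N_i},\mu_i^{*}),\ell}$ and taking degrees on $\overline{M}_{0,4}$,
$$\mathbb{D}\cdot F(N_1,N_2,N_3,N_4)\ =\ \sum_{\vec\mu}\ m_{\vec\mu}\cdot\deg_{\overline{M}_{0,4}}\mathbb{V}_{\sL_{r+1},(\mu_1,\ldots,\mu_4),\ell}.$$

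Next I show that every $\vec\mu$ with $m_{\vec\mu}\ne 0$ satisfies $\ell>c(\sL_{r+1},\vec\mu)$. Fix such a $\vec\mu$ and normalize each $\mu_i$. For each $i$ the tail conformal block $\mathbb{V}_{\sL_{r+1},(\{\lambda_j\}_{j\in N_i},\mu_i^{*}),\ell}$ is, by global generation, a quotient of the classical coinvariant space $\big(\bigotimes_{j\in N_i}V_{\lambda_j}\otimes V_{\mu_i^{*}}\big)_{\sL_{r+1}}=\Hom_{\sL_{r+1}}\!\big(V_{\mu_i},\bigotimes_{j\in N_i}V_{\lambda_j}\big)$, which is therefore nonzero; that is, $V_{\mu_i}$ occurs as a summand of the restriction of $\bigotimes_{j\in N_i}V_{\lambda_j}$ to $\sL_{r+1}$. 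Since each polynomial $\operatorname{GL}_{r+1}$-irreducible $V_\nu$ appearing there has $|\nu|=\lambda(N_i):=\sum_{j\in N_i}|\lambda_j|$ and restricts irreducibly to $\sL_{r+1}$, some such $\nu$ satisfies $\nu=\mu_i+t(1,\dots,1)$ with $t\ge 0$ (using that $\mu_i$ is normalized), hence $|\mu_i|=\lambda(N_i)-(r+1)t\le\lambda(N_i)$, with equality when $|N_i|=1$. Summing over $i\in\{1,2,3\}$ and using the ordering hypothesis $\lambda(N_1)\le\cdots\le\lambda(N_4)$ with $\sum_{i=1}^3\lambda(N_i)\le r+\ell$, we get $\sum_{i=1}^3|\mu_i|\le r+\ell$. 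For the fourth index I use only that $\mu_4\in P_\ell(\sL_{r+1})$ is normalized, so $\mu_4^{(r+1)}=0$ and $\mu_4^{(a)}\le\ell$ for $1\le a\le r$, whence $|\mu_4|\le r\ell$. Adding, $\sum_{i=1}^4|\mu_i|\le r+\ell+r\ell=(r+1)(\ell+1)-1$, so
$$c(\sL_{r+1},\vec\mu)\ =\ -1+\frac{1}{r+1}\sum_{i=1}^4|\mu_i|\ \le\ \ell-\frac{1}{r+1}\ <\ \ell.$$

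Finally, for each $\vec\mu$ with $m_{\vec\mu}\ne 0$, Proposition \ref{newthing} now gives $\mathbb{V}_{\sL_{r+1},(\mu_1,\ldots,\mu_4),\ell}=\mathbb{A}_{\sL_{r+1},\vec\mu}$, a constant bundle on $\overline{M}_{0,4}$ and hence of degree $0$. Therefore every term of the sum vanishes, $\mathbb{D}\cdot F(N_1,N_2,N_3)=0$, and the proposition follows. I expect the step demanding the most care to be the precise form of the factorization decomposition of $f^{*}\mathbb{V}$ — in particular the choice of duals at the separating nodes, which must be made so that the tails carry $\mu_i^{*}$ (this is exactly what yields the useful inequality $|\mu_i|\le\lambda(N_i)$ rather than its useless reverse) — together with the small but essential observation that one cannot afford to bound the fourth weight by $\lambda(N_4)$, which may be large, but must instead spend the single unit of slack in $(r+1)(\ell+1)$ on the crude estimate $|\mu_4|\le r\ell$.
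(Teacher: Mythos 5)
Your proof is correct, and the essential engine is the same as the paper's: factorization plus Proposition \ref{newthing}, with the identical numerical estimate $(r+\ell)+r\ell=(r+1)(\ell+1)-1<(r+1)(\ell+1)$ forcing the attaching data below the critical level. The route differs in where you factorize. The paper factorizes only at the single node separating $N_4$ from $I=N_1\cup N_2\cup N_3$, observes that the $F$-curve lives in $\overline{\operatorname{M}}_{0,|I|+1}\times\{x\}$, and shows the stronger statement that $c_1$ of \emph{every} summand $\mathbb{V}_{\sL_{r+1},\{\lambda_i\}_{i\in I}\cup\{\mu\},\ell}$ vanishes on all of $\overline{\operatorname{M}}_{0,|I|+1}$: here $\sum_{i\in I}|\lambda_i|\le r+\ell$ is the hypothesis verbatim and only the one attaching weight needs the crude bound $|\mu|\le r\ell$. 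You instead factorize at all four nodes and compute $\mathbb{D}\cdot F$ directly as a degree on $\overline{\operatorname{M}}_{0,4}$ (in the style of Fakhruddin's degree formula), which obliges you to control all four attaching weights; this is why you need the extra, correct, representation-theoretic lemma that nonvanishing of a tail block forces $|\mu_i|\le\lambda(N_i)$ (and your care about which side of each node carries $\mu_i$ versus $\mu_i^*$ is exactly the right point to worry about, since $|\mu_i^*|=(r+1)\mu_i^{(1)}-|\mu_i|$ would give a useless bound). The trade-off: the paper's argument is shorter and yields triviality of $\mathbb{D}$ on the whole boundary stratum, not just on the $F$-curve; yours is a self-contained intersection-number computation that localizes precisely where each hypothesis is spent — $\lambda(N_1),\lambda(N_2),\lambda(N_3)$ on three legs and the single unit of slack on $|\mu_4|\le r\ell$.
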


 \begin{proof}
 The  $\operatorname{F}$-curve $F(N_1,N_2,N_3)$ is contained in the boundary divisor
$$\Delta_{N_1\cup N_2 \cup N_3} \cong \overline{\operatorname{M}}_{0,|N_1\cup N_2 \cup N_3|+1} \times \overline{\operatorname{M}}_{0,|N_4|+1},$$ and is actually isomorphic to a curve contained in
$\overline{\operatorname{M}}_{0,|N_1\cup N_2 \cup N_3|+1}$ under the attaching map, which attaches the leg with $N_4$ marked points to the extra attaching point.
Therefore, to show that $\phi_{\mathbb{D}}$ contracts $F(N_1,N_2,N_3)$, it suffices to show that $\mathbb{D}$ is trivial on  $\overline{\operatorname{M}}_{0,|N_1\cup N_2 \cup N_3|+1}\times x$ for a fixed $x\in \overline{\operatorname{M}}_{0,|N_4|+1}$.  Let
$I=N_1\cup N_2 \cup N_3$.

When pulled back to $M=\overline{\operatorname{M}}_{0,|N_1\cup N_2 \cup N_3|+1} \times \overline{\operatorname{M}}_{0,|N_4|+1}$, our bundle $\mathbb{V}_{\sL_{r+1},\vec{\lambda}, \ell}$  breaks up (by factorization) into a direct sum
$$\oplus_{\mu\in {P}_{\ell}}\mathbb{V}_{\sL_{r+1},\{\lambda_i : i \in I\} \cup \{\mu\},\ell}\tensor \mathbb{V}_{\sL_{r+1},\{\lambda_i : i \in N_4\} \cup \{\mu^*\}, \ell}$$
of tensor products of vector bundles pulled back from the two projections of $M$. It therefore suffices to show that $c_1(\mathbb{V}_{\sL_{r+1},\{\lambda_i : i \in I\} \cup \{\mu\},\ell})=0$ for any $\mu\in P_{\ell}$. But  $|\mu|\leq\ell r$ and
$\sum_{i\in I}| \lambda_i| + |\mu|\leq \ell +r +\ell r<(\ell+1)(r+1)$
and hence $\ell$ is greater than the critical level for $(\sL_{r+1},\{\lambda_i : i \in I\} \cup \{\mu\})$. We may therefore apply Proposition \ref{newthing} to conclude that the desired $c_1$ vanishes.
 \end{proof}

Using Remark \ref{ThetaVanishing} in place of Corollary ~\ref{strange}, gives the following general result for arbitrary $\frg$.
 \begin{proposition}\label{extremal2}
 Suppose that  $r\ge 1$, $\ell \ge 1$, and let  $\vec{\lambda}=(\lambda_1,\ldots,\lambda_n)$ be an $n$-tuple in $P_{\ell}(\frg)$.  Let $N_1$,$N_2$,$N_3$,$N_4$ be any partition of $[n]$ into four nonempty subsets, ordered so that, without loss of generality,
 if $L(N_j) = \sum_{i\in N_j} (\lambda_i,\theta)$, then $L(N_1) \le L(N_2) \le L(N_3) \le L(N_4)$.
 If
 $\sum_{j\in \{1,2,3\}}L(N_j) \leq \ell + 1$,
 then the (possibly constant) morphism $\phi_{\mathbb{D}}$ afforded by the divisor
 $\mathbb{D}=\mathbb{D}_{\frg,\vec{\lambda},\ell}$ contracts the $\operatorname{F}$-Curve $\operatorname{F}(N_1,N_2,N_3)$, and in particular,  $\mathbb{D}$ is extremal in the nef cone.
 \end{proposition}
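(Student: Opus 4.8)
The plan is to follow the proof of Proposition~\ref{extremal1} essentially verbatim, with the theta-level vanishing recorded in Remark~\ref{ThetaVanishing}(1) playing the role that the critical-level vanishing of Proposition~\ref{newthing} plays there. First, since the $\operatorname{F}$-curve $\operatorname{F}(N_1,N_2,N_3,N_4)$ is symmetric under permutations of the four sets, the ordering $L(N_1)\le L(N_2)\le L(N_3)\le L(N_4)$ is a genuine WLOG normalization; set $I=N_1\cup N_2\cup N_3$. The curve $\operatorname{F}(N_1,N_2,N_3)$ lies in the boundary divisor $\Delta_{I}\cong\overline{\operatorname{M}}_{0,|I|+1}\times\overline{\operatorname{M}}_{0,|N_4|+1}$ and, under the gluing map attaching the $N_4$-leg at the extra marked point, is isomorphic to a curve contained in $\overline{\operatorname{M}}_{0,|I|+1}\times\{x\}$ for a fixed point $x$. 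Hence it suffices to show that $\mathbb{D}=\mathbb{D}_{\frg,\vec{\lambda},\ell}$ restricts to zero on $\overline{\operatorname{M}}_{0,|I|+1}\times\{x\}$.

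Next I would invoke the factorization theorem of \cite{TUY} to decompose the pullback of $\mathbb{V}_{\frg,\vec{\lambda},\ell}$ to $\Delta_{I}$ as a direct sum
\[
\bigoplus_{\mu\in P_{\ell}(\frg)}\mathbb{V}_{\frg,\{\lambda_i:i\in I\}\cup\{\mu\},\ell}\tensor\mathbb{V}_{\frg,\{\lambda_i:i\in N_4\}\cup\{\mu^*\},\ell}
\]
of external tensor products of bundles pulled back from the two factors of $\Delta_{I}$. Restricting to $\overline{\operatorname{M}}_{0,|I|+1}\times\{x\}$ and taking $c_1$, it is therefore enough to prove that $c_1(\mathbb{V}_{\frg,\{\lambda_i:i\in I\}\cup\{\mu\},\ell})=0$ for every $\mu\in P_{\ell}(\frg)$.

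Finally I would run the numerical estimate. For $\mu\in P_{\ell}(\frg)$ one has $(\mu,\theta)\le\ell$, and by the ordering hypothesis $\sum_{i\in I}(\lambda_i,\theta)=L(N_1)+L(N_2)+L(N_3)\le\ell+1$, so
\[
\lL\bigl(\frg,\{\lambda_i:i\in I\}\cup\{\mu\}\bigr)=-1+\tfrac12\Bigl(\sum_{i\in I}(\lambda_i,\theta)+(\mu,\theta)\Bigr)\le -1+\tfrac12\bigl((\ell+1)+\ell\bigr)=\ell-\tfrac12<\ell .
\]
Since moreover every weight in $\{\lambda_i:i\in I\}\cup\{\mu\}$ lies in $P_{\ell}(\frg)$, Remark~\ref{ThetaVanishing}(1) applies and gives $\mathbb{V}_{\frg,\{\lambda_i:i\in I\}\cup\{\mu\},\ell}=\mathbb{A}_{\frg,\{\lambda_i:i\in I\}\cup\{\mu\}}$, a constant bundle, whose $c_1$ vanishes. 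This shows $\phi_{\mathbb{D}}$ contracts $\operatorname{F}(N_1,N_2,N_3)$, and hence $\mathbb{D}$ is extremal by the same reasoning as in Proposition~\ref{extremal1}. The argument is routine once the reduction is in place; the only point requiring care is that the slack $(\mu,\theta)\le\ell$ available for each factorization weight $\mu\in P_{\ell}(\frg)$, combined with $\sum_{i\in I}(\lambda_i,\theta)\le\ell+1$, is exactly what keeps the total $(\,\cdot\,,\theta)$-sum of the factorized data strictly below $2(\ell+1)$ — the sole place where the hypothesis $\sum_{j\in\{1,2,3\}}L(N_j)\le\ell+1$ is used.
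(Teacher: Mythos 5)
Your proposal is correct and is exactly the argument the paper intends: the paper states Proposition~\ref{extremal2} with the one-line justification ``Using Remark~\ref{ThetaVanishing} in place of Corollary~\ref{strange}'' applied to the proof of Proposition~\ref{extremal1}, and your write-up carries this out faithfully, with the key estimate $\lL(\frg,\{\lambda_i:i\in I\}\cup\{\mu\})\le -1+\tfrac12((\ell+1)+\ell)<\ell$ correctly verified.
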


\section{Application Two: Conformal blocks morphisms and Hassett contractions}\label{H}
As a step towards understanding the images of the morphisms $\phi_{\mathbb{D}}$, for $\mathbb{D}=c_1(\mathbb{V}(\mg,\vec{\lambda}, \ell))$, we show that if the weights $\lambda_i$ satisfy prescribed conditions (as given in Theorems \ref{Smallerd} and \ref{Smallerdnew}), then $\phi_{\mathbb{D}}$ factors through
maps to Hassett's moduli spaces $\overline{\operatorname{M}}_{0,\mathcal{A}}$ of stable weighted pointed rational curves, where the weight data is explicitly determined by the $\frg$, $\ell$ and $\lambda_i$.

\subsection{Background on Hassett spaces}\label{Hassett} Consider
an $n$-tuple  $\mathcal{A}=\{a_1,\ldots,a_n\}$, with $a_i \in \mathbb{Q}$, $0 < a_i \le 1$, such that $\sum_i a_i >2$.   In \cite{HassettWeighted}, Hassett introduced moduli spaces $\overline{\operatorname{M}}_{0,\mathcal{A}}$, parameterizing families of stable weighted pointed rational curves $(C, (p_1,\ldots,p_n))$
such that (1) $C$ is nodal away from its marked points $p_i$; (2) $\sum_{j\in J} a_i \le 1$, if the marked points $\{p_j : j\in J\}$ coincide; and (3) If $C'$ is an irreducible component of $C$ then
$\sum_{p_i \in C'} a_i + \mbox{ number of nodes on } C' > 2$.  These Hassett spaces  $\overline{\operatorname{M}}_{0,\mathcal{A}}$ receive birational morphisms $\rho_{\mathcal{A}}$ from
$\overline{\operatorname{M}}_{0,\op{n}}$ that are characterized entirely by which F-Curves (see Def. \ref{FCurve}) they contract.

\begin{definition/lemma}\label{rho}For any Hassett space $\overline{\operatorname{M}}_{0,\mathcal{A}}$, with  $\mathcal{A}=\{a_1,\ldots,a_n\}$, there are birational morphisms
$\rho_{\mathcal{A}}: \overline{\operatorname{M}}_{0,\op{n}} \longrightarrow  \overline{\operatorname{M}}_{0,\mathcal{A}}$,
contracting all  $\operatorname{F}$-curves $F(N_1,N_2,N_3,N_4)$ satisfying: $\sum_{i\in N_1 \cup N_2 \cup N_3} a_i \le 1$,
and no others, where without loss of generality, the leg $N_4$ carries the most weight.
\end{definition/lemma}

\subsection{Results on Hassett spaces}
The following theorems generalize  \cite[Proposition 4.7]{Fakh}, where $\frg=\sL_2$ was considered.
 \begin{theorem}\label{Smallerd} Let $\mathbb{D}=\mathbb{D}_{\sL_{r+1},\vec{\lambda},\ell}$ be such that:
\begin{enumerate}
\item $0<|\lambda_i|\le \ell+r$ for all $i \in \{1,\ldots,n\}$;
\item  $\sum_{i=1}^n|\lambda_i| > 2(r+\ell)$.
\end{enumerate}
Then the morphism $\phi_{\mathbb{D}}$   factors through $\rho_{\mathcal{A}}: \overline{\operatorname{M}}_{0,\op{n}} \longrightarrow \overline{\operatorname{M}}_{0,\mathcal{A}}$,  where
  $\mathcal{A}=\{a_1,\ldots,a_n\}$, $a_i=\frac{|\lambda_i|}{r+\ell}$.
\end{theorem}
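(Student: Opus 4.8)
The plan is to show that $\phi_{\mathbb{D}}$ contracts exactly those $\operatorname{F}$-curves contracted by $\rho_{\mathcal{A}}$, and then invoke the rigidity/universal-property characterization of $\rho_{\mathcal{A}}$ recorded in Definition/Lemma \ref{rho}. Concretely, by Definition/Lemma \ref{rho}, $\rho_{\mathcal{A}}$ contracts precisely the $\operatorname{F}$-curves $F(N_1,N_2,N_3,N_4)$ with $\sum_{i\in N_1\cup N_2\cup N_3} a_i\le 1$, i.e.\ (after clearing denominators by $r+\ell$) those with $\lambda(N_1)+\lambda(N_2)+\lambda(N_3)\le r+\ell$, where $\lambda(N_j)=\sum_{i\in N_j}|\lambda_i|$ and $N_4$ is the leg of maximal weight. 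So the first step is: \emph{if} $\lambda(N_1)+\lambda(N_2)+\lambda(N_3)\le r+\ell$, then $\phi_{\mathbb{D}}$ contracts $F(N_1,N_2,N_3)$. This is exactly Proposition \ref{extremal1} (whose hypotheses, $\lambda(N_j)$ ordered and $\sum_{j\le 3}\lambda(N_j)\le r+\ell$, match verbatim). Note hypothesis (1), $|\lambda_i|\le \ell+r$, guarantees $\mathbb{D}$ itself is not forced to vanish identically for trivial reasons (indeed if some $|\lambda_i|>\ell+r$ the bundle could still be nonzero, but the point of (1) is that the individual weights are not large enough to make $\mathcal{A}$ ill-defined, since each $a_i=|\lambda_i|/(r+\ell)\le 1$), and hypothesis (2) ensures $\sum a_i>2$ so that $\overline{\operatorname{M}}_{0,\mathcal{A}}$ is actually a nonempty Hassett space.

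The second step is the more delicate converse direction needed to apply a rigidity lemma for morphisms from $\overline{\operatorname{M}}_{0,n}$: one wants to know that $\phi_{\mathbb{D}}$ does not contract \emph{more} than $\rho_{\mathcal{A}}$ — or, more precisely, one wants a clean statement that $\phi_{\mathbb{D}}$ factors through $\rho_{\mathcal{A}}$. The standard mechanism here (as in Fakhruddin's argument for $\sL_2$) is: a morphism $g\colon \overline{\operatorname{M}}_{0,n}\to Z$ factors through $\rho_{\mathcal{A}}$ as soon as $g$ contracts every $\operatorname{F}$-curve that $\rho_{\mathcal{A}}$ contracts, because $\rho_{\mathcal{A}}$ is a birational morphism of projective varieties whose exceptional locus is a union of (boundary strata that are) covered by such $\operatorname{F}$-curves, and $\overline{\operatorname{M}}_{0,n}$ is $\mathbb{Q}$-factorial; one then uses the rigidity lemma (a morphism constant on the fibers of a proper surjection with connected fibers, to a separated target, factors through it). So I would: (a) identify the locus contracted by $\rho_{\mathcal{A}}$ as swept out by the $\operatorname{F}$-curves $F(N_1,N_2,N_3,N_4)$ with $\lambda(N_1)+\lambda(N_2)+\lambda(N_3)\le r+\ell$ — this is part of Hassett's construction / Definition/Lemma \ref{rho}; (b) apply Step 1 (Proposition \ref{extremal1}) to see $\phi_{\mathbb{D}}$ kills all of these curves; (c) conclude by the rigidity lemma that $\phi_{\mathbb{D}}=\psi\circ\rho_{\mathcal{A}}$ for some morphism $\psi$ from $\overline{\operatorname{M}}_{0,\mathcal{A}}$.

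The main obstacle I anticipate is step (c)'s hypothesis check: the rigidity lemma requires that $\phi_{\mathbb{D}}$ be constant on \emph{every connected fiber} of $\rho_{\mathcal{A}}$, not merely on the $\operatorname{F}$-curves. Since $\rho_{\mathcal{A}}$ can have positive-dimensional fibers when several of the inequalities $\sum_{N_1\cup N_2\cup N_3}a_i\le 1$ hold simultaneously, one must argue that these fibers are themselves connected and chain-connected by the contracted $\operatorname{F}$-curves — equivalently, that the equivalence relation "$x\sim y$ if $x,y$ lie on a common contracted $\operatorname{F}$-curve" has the fibers of $\rho_{\mathcal{A}}$ as its classes. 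This is a known feature of Hassett reductions (the fibers are products of smaller Hassett/$\overline{\operatorname{M}}_{0,\bullet}$ spaces and their Mori cones are generated by $\operatorname{F}$-curve classes), so the argument should go through, but it is the step that needs care rather than the divisor-theoretic input, which is entirely supplied by Proposition \ref{newthing} via Proposition \ref{extremal1}. Finally, I would note that the same template, with Proposition \ref{extremal2} and the theta level replacing Proposition \ref{extremal1}, yields Theorem \ref{Smallerdnew} for arbitrary $\frg$.
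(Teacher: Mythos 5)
Your proposal is correct and follows essentially the same route as the paper: verify that hypotheses (1) and (2) make $\mathcal{A}$ an admissible Hassett weight datum, reduce via Definition/Lemma \ref{rho} to showing that every $\operatorname{F}$-curve contracted by $\rho_{\mathcal{A}}$ satisfies $\sum_{j\le 3}\lambda(N_j)\le r+\ell$, and conclude by Proposition \ref{extremal1}. The only difference is that the paper outsources your step (c) — the passage from ``contracts all $\operatorname{F}$-curves contracted by $\rho_{\mathcal{A}}$'' to ``factors through $\rho_{\mathcal{A}}$'' — entirely to \cite[Lemma 4.6]{Fakh}, whereas you sketch the underlying rigidity argument yourself; that is exactly the right black box to open.
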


\begin{proof}
For $\mathcal{A}=\{a_1,\ldots,a_n\}, \ \ a_i=\frac{|\lambda_i|}{r+\ell},$
as in the hypothesis, the condition $|\lambda_i|\le \ell+r$, guarantees that $a_i \le 1$ for all $i$, and $\sum_{i=1}^n|\lambda_i| > 2(r+\ell)$ guarantees that $\sum_{i=1}^na_i>2$.

By \cite[Lemma 4.6]{Fakh}, we need to show that any $\operatorname{F}$-curve $F(N_1,N_2,N_3,N_4)$ contracted by   $\rho_{\mathcal{A}}$ is also contracted by
 $\phi_{\mathbb{D}}$. Suppose that  $\rho_{\mathcal{A}}$ contracts the $\operatorname{F}$-curve $F(N_1,N_2,N_3,N_4)$, so that in particular, by Definition/Lemma \ref{rho}, $\sum_{i\in N_1 \cup N_2 \cup N_3}a_i \le 1$. Then $\sum_{j \in \{1,2,3\}}\lambda(N_j) =(r+\ell)\sum_{j\in \{1,2,3\}} a_j \le r+\ell$,
and hence by Proposition \ref{extremal1},  $\phi_{\mathbb{D}}$ contracts   $\operatorname{F}$-curve $F(N_1,N_2,N_3,N_4)$.
\end{proof}
Using Proposition \ref{extremal2} in place of Proposition \ref{extremal1}, we get
 \begin{theorem}\label{Smallerdnew} Let $\mathbb{D}=\mathbb{D}_{\frg,\vec{\lambda},\ell}$ be such that
 $\lambda_i\neq 0$ for all $i$ and
 \begin{equation}\label{red}
 \sum_{i=1}^n \lambda_i(H_{\theta})> 2 (\ell+1).
 \end{equation}

Then the morphism $\phi_{\mathbb{D}}$   factors through $\rho_{\mathcal{A}}: \overline{\operatorname{M}}_{0,\op{n}} \longrightarrow \overline{\operatorname{M}}_{0,\mathcal{A}}$, where
  $\mathcal{A}=\{a_1,\ldots,a_n\}$, $a_i=\frac{(\lambda_i,\theta)}{\ell+1}.$
\end{theorem}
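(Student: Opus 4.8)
The plan is to run the proof of Theorem \ref{Smallerd} essentially verbatim, with Proposition \ref{extremal2} in place of Proposition \ref{extremal1}. First I would check that $\mathcal{A}=\{a_1,\dots,a_n\}$ with $a_i=(\lambda_i,\theta)/(\ell+1)$ is an admissible weight datum in the sense of Section \ref{Hassett}. Since each $\lambda_i$ is a nonzero dominant integral weight, $(\lambda_i,\theta)=\lambda_i(H_\theta)$ is a positive integer, so $a_i>0$; since $\lambda_i\in P_\ell(\frg)$ we have $(\lambda_i,\theta)\leq\ell$, hence $a_i\leq\ell/(\ell+1)<1$ — note that, unlike in Theorem \ref{Smallerd}, no additional upper bound on the size of the weights is needed to force $a_i\le 1$. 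Finally, hypothesis \eqref{red} says precisely that $\sum_i a_i>2$.

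Next, by \cite[Lemma 4.6]{Fakh} it suffices to prove that every $\operatorname{F}$-curve contracted by $\rho_{\mathcal{A}}$ is also contracted by $\phi_{\mathbb{D}}$. So I would take an $\operatorname{F}$-curve $F(N_1,N_2,N_3,N_4)$ contracted by $\rho_{\mathcal{A}}$, with the labelling chosen so that $N_4$ carries the largest $\mathcal{A}$-weight; by Definition/Lemma \ref{rho} this means $\sum_{i\in N_1\cup N_2\cup N_3}a_i\leq 1$. Because $a_i$ is a fixed positive multiple of $\lambda_i(H_\theta)$, the ordering of the four sets by $\mathcal{A}$-weight coincides with their ordering by $L(N_j)=\sum_{i\in N_j}(\lambda_i,\theta)$, so $N_4$ also carries the largest $L$-weight and we are in the situation of Proposition \ref{extremal2}. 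Moreover $\sum_{j\in\{1,2,3\}}L(N_j)=(\ell+1)\sum_{i\in N_1\cup N_2\cup N_3}a_i\leq\ell+1$, so Proposition \ref{extremal2} applies and shows that $\phi_{\mathbb{D}}$ contracts $F(N_1,N_2,N_3,N_4)$, which completes the argument.

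I do not expect a genuine obstacle here: the substantive input is already packaged into Proposition \ref{extremal2} (and ultimately into the vanishing statement of Remark \ref{ThetaVanishing}). The only two points that need care are that the admissibility $0<a_i<1$ is automatic from $\lambda_i\neq 0$ together with $\lambda_i\in P_\ell(\frg)$ — so that the theorem's hypotheses really reduce to the single inequality \eqref{red} — and that the $\mathcal{A}$-weight ordering and the $L$-weight ordering of the $N_j$ agree, which is exactly what allows a $\rho_{\mathcal{A}}$-contracted curve to be fed directly into Proposition \ref{extremal2}.
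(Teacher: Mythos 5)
Your proposal is correct and is exactly the argument the paper intends: the paper proves Theorem \ref{Smallerd} in detail and then obtains Theorem \ref{Smallerdnew} by the same reasoning with Proposition \ref{extremal2} substituted for Proposition \ref{extremal1}, which is what you carry out. Your added checks (that $0<a_i<1$ follows automatically from $\lambda_i\neq 0$ and $\lambda_i\in P_\ell(\frg)$, and that the $\mathcal{A}$-weight and $L$-weight orderings of the $N_j$ agree) are accurate and fill in the details the paper leaves implicit.
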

Note that if $\mathbb{D}=\mathbb{D}_{\frg,\vec{\lambda},\ell}$ is non trivial then by Remark \ref{ThetaVanishing}, $\sum (\lambda_i,\theta)\geq 2 (\ell+1)$.

\section{Table}\label{Table}
In these examples, computed using \cite{ConfBlocks},  $\ell$ is a critical level for the pair $(\sL_{r+1},\vec{\lambda})$, and Deg (when $n=4$), denotes the degree of $\vr$ (and of $\vl$)
over $\overline{\operatorname{M}}_{0,4}=\pone$.  In particular, Deg $=0$ means $\mathbb{D}_{\sL_{r+1},\vec{\lambda},\ell}=\mathbb{D}_{\sL_{\ell+1},\vec{\lambda}^T,r}=0$, even when the rank of $\vr$ (and of $\vl$) is nonzero.   A $*$ in the Deg column indicates more than $4$ marked points.
\begin{center}
    \begin{tabular}{ |l|l| l | p{7.3 cm} | l | l|l|}
    \hline
    \text{Deg}& $(r+1, \ell+1)$ & $n$ & $ \  \ \ \ \ \ \ \vec{\lambda}=(\lambda_1,\dots, \lambda_n)$  \ \ & $\rk \mathbb{A}$ & $\rk\mathbb{V}_{\sL_{r+1},\vec{\lambda},\ell}$ & $\rk\mathbb{V}_{\sL_{\ell+1},\vec{\lambda}^T,r}$ \\ \hline

    *&$(3,2)$      & $6$ & $(\omega_1,\dots ,\omega_1)$  & 5 & 1 & 4\\ \hline
    1&$(3,2)$      & $4$ & $(\omega_1,\omega_1, \omega_2, \omega_2)$ & 2 & 1 & 1 \\ \hline
    0&$(4,4)$      & $4$ & $(\omega_1, (2\omega_1+\omega_3)^3)$ & 2&1 & 1\\ \hline
    *& $(3,6)$      & $5$ & $(2\omega_1+ \omega_2, \omega_2, 2\omega_1, 2 \omega_2, 3\omega_2)$ & 7 &7 & 0\\ \hline
    *& $(3,5)$      & $5$ & $(2\omega_1+ \omega_2, \omega_2, 2\omega_1, 2 \omega_2, \omega_1 + \omega_2)$ & 9 &8 & 1\\ \hline
  0&$(4,4)$      & $4$ & $(\omega_2+\omega_3, \omega_1, \omega_1+2\omega_2, 2\omega_1+\omega_3)$ & 2&1 & 1\\ \hline
   0&$(4,5)$      & $4$ & $(\omega_1,2\omega_1+\omega_2+\omega_3,(3\omega_1+\omega_3)^2)$ & 2&1 & 1\\ \hline
    1& $(4,5)$ &4& $(\omega_1+\omega_3, 2\omega_1+2\omega_2, 2\omega_1+2\omega_2, 4\omega_1)$ &4 &1 & 3 \\ \hline
     *& $(3,6)$ &8& $((2\omega_1)^6, \omega_2, 2\omega_2)$ & 150 &136 & 14\\ \hline

    \end{tabular}
\end{center}

\section{The proof of Theorem \ref{main}: The main reductions}\label{sketch}
For the proof of Theorem \ref{main}, in addition to  \cite{witten,agni}, we use the  geometric procedure of creating invariants from Schubert calculus \cite{PB}. Together with a standard formalism, this leads to
a duality isomorphism of classical invariant theory (reviewed in Section \ref{classic}):
 \begin{equation}\label{duality}
 \aclar^*\leto{\sim} \aclal.
 \end{equation}
The proof of Theorem \ref{main} then breaks up into two parts:
\begin{enumerate}
\item We prove the equality \eqref{morpheusnew} by working over $\operatorname{M}_{0,n}$ and using the connection between quantum cohomology and conformal blocks, and a degeneration argument for Gromov-Witten invariants (but working over a fixed point of $\operatorname{M}_{0,n}$).
\item We show that the natural morphism  $\vr^*\to \vl$  arising from  the isomorphism ~\eqref{duality}  is the zero map. To do so, we invoke the interpretation \cite{pauly} of $\vr^*$  as global sections of  a line bundle on a moduli space of parabolic bundles (valid over $\operatorname{M}_{0,n}$, but not over  $\overline{\operatorname{M}}_{0,\op{n}}$). Again, it suffices to work over $\operatorname{M}_{0,n}$.
\end{enumerate}
These two steps lead to an isomorphism  ~\eqref{morpheus}. Let $\Bbb{K}_{r+1,\ell,\vec{\lambda}}$ be the kernel of $\clar\to \vr$: Consider the exact
sequences
$$0\to \vr^* \to \clar^*\to \Bbb{K}_{r+1,\ell,\vec{\lambda}}^*\to 0$$
and
$$0\to \Bbb{K}_{\ell+1,r,\vec{\lambda}^T}\to \clal\to \vl\to 0.$$
Step (2) gives rise  to vertical arrows from the first exact sequence above to the second. We therefore find a map  ~\eqref{morpheus}. By the snake lemma, ~\eqref{morpheus} is a surjective morphism of vector bundles of the same rank, hence an isomorphism.

\subsection{Classical strange duality and the map ~\eqref{duality}}\label{classic}

We recall from \cite{PB}, how Schubert calculus of Grassmannians can be used to produce natural bases in the spaces of
invariants and hence produces the duality isomorphism ~\eqref{duality} by the (standard) strange duality formalism. The following is in analogy with the tangent space of a Schubert cell $\Omega^o_{\lambda}(E_{\bull})$ in a Grassmannian (see Section ~\ref{added2}).

Let $V$ and $Q$ be vector spaces of dimensions $r+1$ and $\ell +1$ respectively. Let $N=r+\ell +2$ (as before).
Let $\vec{\lambda}=(\lambda_1,\dots,\lambda_n)$ be an $n$-tuple of  Young diagrams that each fit into a $(r+1)\times (\ell+1)$ box. Assume $\sum_{i=1}^n |\lambda_i| = (r+1)(\ell+1)$. We define a divisor in $D\subseteq \Fl(V)^n\times \Fl(Q)^n$: A point $(F^1_{\bull},\dots, F^n_{\bull}, G^1_{\bull},\dots, G^n_{\bull})\in D$ iff $\exists$ a non-zero map
$\phi:V\to Q$ so that for $a\in[r+1]$ and $i\in[n]$,
\begin{equation}\label{defD}
\phi(F^i_{a})\subseteq G^i_{\ell+1-\lambda_i^{(a)}}.
\end{equation}

We will make this precise by defining $D$ as a determinantal scheme (see Section \ref{books4}) and also identify $\mathcal{O}(D)$. We choose to write the answers in a symmetric manner. We set
$T=Q^*$. Note that $\Fl(Q)$ is canonically identified with $\Fl(T)$. For $a\in[r+1]$, let $\ml_a\in \operatorname{Pic}(\Fl(V))$ be the pull back of the ample generator (top exterior power of the dual of the universal subbundle) of $\Gr(a,V)$ by the tautological map
$$\Fl(V)\to \Gr(a,V),\ F_{\bull}\mapsto F_a.$$

For non-negative integers $\nu_1,\dots,\nu_{r+1}$, define a Young diagram $\lambda=(\lambda^{(1)},\dots,\lambda^{(r+1)})$ by
$$\lambda^{(a)}=\nu_a+\nu_{a+1}+\dots+\nu_{r+1}$$
and a line bundle $\ml_{\lambda}=\ml_1^{\nu_1}\tensor\dots\tensor\ml_{r+1}^{\nu_{r+1}}$, whose fiber over $F_{\bull}$ is denoted by $\ml_{\lambda}(V,F_{\bull})=\ml_{\lambda}(F_{\bull}).$

\begin{proposition}(Borel-Weil)
The following are isomorphic as representations of $\GL(V)$:
$$H^0(\Fl(V),\ml_{\lambda})= V_{\lambda}^*.$$
\end{proposition}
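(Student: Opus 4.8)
The plan is to realize $\Fl(V)$ as $\GL(V)/B$, identify $\ml_\lambda$ as a $\GL(V)$-linearized line bundle, apply the Borel--Weil theorem, and pin down the resulting irreducible by a single weight computation.

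First I would record the Pl\"ucker picture. For each $a\in[r+1]$ the tautological map $p_a\colon\Fl(V)\to\Gr(a,V)$ is a flag bundle, hence proper and smooth with connected fibres, so $(p_a)_*\mathcal{O}_{\Fl(V)}=\mathcal{O}_{\Gr(a,V)}$; since $\ml_a=p_a^*(\det S_a^*)$ with $S_a$ the universal subbundle on $\Gr(a,V)$, the projection formula together with the (degree-one) projective normality of the Pl\"ucker embedding gives $H^0(\Fl(V),\ml_a)=H^0(\Gr(a,V),\det S_a^*)=(\wedge^a V)^*$, where $\xi\in(\wedge^a V)^*$ corresponds to the section $F_{\bull}\mapsto\xi|_{\wedge^a F_a}$. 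Multiplying sections then produces a $\GL(V)$-equivariant map
\[
\mu\colon\bigotimes_{a=1}^{r+1}\operatorname{Sym}^{\nu_a}\!\big((\wedge^a V)^*\big)\longrightarrow H^0(\Fl(V),\ml_\lambda),
\]
which will turn out to be the projection onto the Cartan (highest-weight) component.

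Next I would compute the module structure of the target. Fix a basis $e_1,\dots,e_{r+1}$ of $V$ and let $B$ be the stabiliser of the standard flag. Then $\ml_\lambda$ is $\GL(V)$-linearized, and its fibre over the base point $eB$ is $\bigotimes_a\big(\wedge^a\spane(e_1,\dots,e_a)\big)^{*\otimes\nu_a}$, on which an upper-triangular $b\in B$ acts through the character $b\mapsto\prod_j b_{jj}^{-\lambda^{(j)}}$ (using $\lambda^{(j)}=\sum_{a\ge j}\nu_a$); thus $\ml_\lambda\cong\GL(V)\times_B\mathbb{C}_{-\lambda}$. Since $\lambda^{(1)}\ge\cdots\ge\lambda^{(r+1)}\ge0$ is dominant, the Borel--Weil theorem shows $H^0(\Fl(V),\ml_\lambda)$ is a nonzero irreducible $\GL(V)$-representation (and higher cohomology vanishes). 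To identify it, evaluate the weight of the section $\mu\big(\bigotimes_a(e_1^*\wedge\cdots\wedge e_a^*)^{\otimes\nu_a}\big)$: the functional $e_1^*\wedge\cdots\wedge e_a^*$ is the lowest-weight vector of $(\wedge^a V)^*$, of $T$-weight $-(1,\dots,1,0,\dots,0)$ ($a$ ones), and it is a nonzero section (nonvanishing at a generic flag); hence the product is a nonzero section annihilated by the lower-triangular nilpotent subalgebra, of weight $\sum_a\nu_a\cdot(-(1,\dots,1,0,\dots,0))=-\lambda$. A vector killed by $\mathfrak{n}^-$ in an irreducible representation is its lowest-weight vector, so $-\lambda$ is the lowest weight of $H^0(\Fl(V),\ml_\lambda)$; the unique irreducible with lowest weight $-\lambda$ is $V_\lambda^*$, giving $H^0(\Fl(V),\ml_\lambda)=V_\lambda^*$ as $\GL(V)$-modules.

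The flag-bundle computation of the sections and the bookkeeping of $B$-characters are routine; the only substantive input is the Borel--Weil theorem (nonvanishing and irreducibility of $H^0$ of the line bundle attached to a dominant weight). The one point that requires care is the sign convention --- that the dominant data $\lambda$ yields $V_\lambda^*$ rather than $V_\lambda$ --- which the explicit section of weight $-\lambda$ settles unambiguously.
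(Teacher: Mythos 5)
Your proof is correct. The paper states this proposition as the classical Borel--Weil theorem and gives no proof, so there is nothing to diverge from: your argument --- identifying $\ml_a$ via the Pl\"ucker embedding, computing the $B$-character of the fibre of $\ml_\lambda$ at the base point to recognize $\ml_\lambda\cong\GL(V)\times_B\mathbb{C}_{-\lambda}$, invoking Borel--Weil for irreducibility, and pinning down the isomorphism class by exhibiting the explicit $\mathfrak{n}^-$-invariant section of weight $-\lambda$ --- is exactly the standard derivation, and your care with the sign convention (why the answer is $V_\lambda^*$ rather than $V_\lambda$) is the one point that genuinely needed checking.
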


\subsubsection{}\label{refcomment}
For a Young diagram $\lambda$,  define a  vector bundle $\mathcal{P}_{\lambda} $ on $\Fl(V)\times\Fl(Q)$ with fiber over $(F_{\bull},G_{\bull})$ given by
 $$\{ \phi \in \Hom (V,Q) | \phi(F_a) \subseteq G_{\ell+1-\lambda^{(a)}}, a\in[r+1]\},$$
 and $\mathcal{T}_{\lambda}= \Hom(V,Q)/\mathcal{P}_{\lambda}.$
  
 \begin{itemize}
 \item The rank of $\mathcal{T}_{\lambda} $ equals $|\lambda|$.
 \item The  fiber of the determinant of $\mathcal{T}_{\lambda} $ at a point $(F_{\bull},G_{\bull})$ equals
 $\ml_{\lambda}(F_{\bull}) \tensor (\det Q)^{r+1}\tensor \ml_{\mu}(G_{\bull}),$
 where $\mu=(\mu^{(1)},\dots,\mu^{(\ell+1)}) $ is the partition  transpose to the partition $(\ell+1-\lambda^{(r+1)},\dots,\ell+1-\lambda^{(1)})$. (See ~\cite{PB}, and \cite{bk2}, formula 16, where $\mu$ is called the $(\ell+1)$-flip of $\lambda$.)
 \item If $\widetilde{G}_{\bull}\in \Fl(T)$ corresponds to $G_{\bull}\in\Fl(Q)$ under the identification $\Fl(T)=\Fl(Q)$, we can
 write the fiber of the determinant of $\mathcal{T}_{\lambda} $, symmetrically as 
 $\ml_{\lambda}(F_{\bull}) \tensor  \ml_{\lambda^T}(\widetilde{G}_{\bull}).$
 \end{itemize}
\subsubsection{}
One can view $D$ as the zero locus of the determinant of the following morphism on $\Fl(V)^n\times \Fl(Q)^n$ of vector bundles
of the same rank

$$\Hom(V,Q)\to \bigoplus_{i=1}^n \pr^*_i\mathcal{T}_{\lambda_i}$$
where $\pr_i$ are the projection maps.

Recall that if $\mathcal{V} \to \mathcal{W}$ is a map between vector bundles of the same rank on a scheme $S$, then one gets a canonical element $s\in H^0(S,\det \mw\tensor(\det \mv)^*)$.
In this way  we get a line bundle on $\Fl(V)^n\times \Fl(Q)^n=\Fl(V)^n\times \Fl(T)^n$, $T=Q^*$   with a canonical section $s$ whose zero locus is $D$. 

This line bundle has fiber at 
$(F^1_{\bull},\dots, F^n_{\bull}, \widetilde{G}^1_{\bull},\dots, \widetilde{G}^n_{\bull})\in \Fl(V)^n\times \Fl(T)^n$ (see Section \ref{refcomment}), equal to
$$\otimes_{i=1}^n(\ml_{\lambda_{i}}(F^i_{\bull})\tensor \ml_{\lambda^T_{i}}(\widetilde{G}^i_{\bull}))\otimes (\det V)^{\ell+1}\otimes(\det T)^{r+1}.$$

We find therefore an element
$$s\in H^0(\Fl(V)^n,\otimes_{i=1}^n\ml_{\lambda_{i}})^{\SLL(V)}\tensor H^0(\Fl(T)^n,\otimes_{i=1}^n \ml_{\lambda^T_{i}})^{\SLL(T)}=\aclar^*\tensor \aclal^*$$ and  a duality map
\begin{equation}\label{duality2}
\aclal\to \aclar^*.
\end{equation}

\subsection{Relations to Schubert calculus}\label{Schubert}We now explain why the above duality map \eqref{duality2} is an isomorphism. 
Let $W$ be an $N=r+\ell+2$-dimensional  vector space equipped with $n$ flags $H^1_{\bull},\dots,H^n_{\bull}$ in general position. We fix $V$ and $Q$ as above of dimensions $r+1$ and $\ell+1$ respectively.
By Kleiman transversality, the intersection $\cap_{i=1}^n\Omega_{\lambda_i}(H^i_{\bull}) \subseteq \operatorname{Gr}(r+1,N)$ is finite, and transverse. Let $V_1,\dots,V_m$  be the points in this intersection, and let $Q_1,\dots,Q_m$ the corresponding quotients,
$Q_a=W/V_a$. Each of the  vector spaces $V_1,\dots,V_m,Q_1,\dots,Q_m$  receive $n$ canonical induced flags (from $W$).
and hence give $\operatorname{GL}(V)$ and $\operatorname{GL}(Q)$-orbits in $\Fl(V)^n$ and $\Fl(Q)^n$ respectively (by choosing isomorphisms $V_a\to V$, and $Q_a\to Q$). We can dualize the $\operatorname{GL}(Q)$ orbits to obtain $\operatorname{GL}(T)$ orbits in $\Fl(T)^n$ corresponding to each of $Q_1,\dots,Q_m$ (see Section \ref{grdual}).

Choose orbit representatives $x_1,\dots,x_m\in \Fl(V)^n$ and $y_1,\dots,y_m\in \Fl(T)^n$. From the setup, it is known
that $m=\rk (\tensor_{i=1}^n V_{\lambda_i}^*)^{\operatorname{SL}(V)}=\rk(\tensor_{i=1}^n V_{\lambda_i^T}^*)^{\SLL(T)}$, and that
$s(x_a,y_b)\neq 0$ if and only if $a=b$. Therefore the sections $s(\cdot,y_a)$ (well defined up to scalars) form a basis for  $(\tensor_{i=1}^n V_{\lambda_i}^*)^{\SLL(V)}$ (this was the main result of \cite{PB}). Hence by the standard strange duality formalism, the duality map ~\eqref{duality2} is an isomorphism, also see  Lemma ~\ref{books} with quotient stacks $M=\Fl(V)^n/\operatorname{SL}(V)$ and $N=\Fl(T)^n/\operatorname{SL}(T)$ (and $\ma$ and $\mb$ are descents of
$\tensor_{i=1}^n \ml_{\lambda_{i}}$ and $\tensor_{i=1}^n \ml_{\lambda^T_{i}}$).

\begin{remark}\label{transversality}
We recall  the reason for the vanishing $s(x_a,y_b)=0$ for $a\neq b$: The natural non-zero map $\phi:V_a\to Q_b$ (inclusion in $W$ followed by projection to $Q_b$) satisfies the conditions of ~\eqref{defD}. It is also easy to see that $s(x_a,y_a)\neq 0$: If $(x_a,y_a)\in D$
then any map $\phi$ in the definition of $D$ gives us an element in $\cap_{i=1}^nT\Omega_{\lambda_i}(H^i_{\bull})_{V_a}=0$ (by transversality).
\end{remark}
The map \eqref{morpheus} is defined to be the inverse of \eqref{duality2}. By Lemma \ref{books}, we have explicit control of \eqref{morpheus} when we lay out a suitable enumerative problem.

\subsection{A universal situation}\label{books1}
We analyze the ``strange duality'' setting of schemes (or stacks) $M,N$ equipped with line bundles $\mathcal{A}$ and $\mb$ and a section $s$ of $\mathcal{A}\boxtimes\mb$ in some detail (an instance of this has appeared in Section \ref{Schubert}). We will use this setting again in Section \ref{books4}.  Fix the following data:
\begin{enumerate}
\item Let $M$ and $N$ be schemes  (or quotient stacks $X/G$) with line bundles $\mathcal{A}$ and $\mb$ respectively. Assume  $H^0(M,\ma)$ and $H^0(N,\mb)$ are
both finite dimensional of the same dimension $m$.
\item Suppose we are given a section $s$ of $\mathcal{A}\boxtimes \mathcal{B}$ on $M\times N$. This gives rise to a (possibly degenerate) ``duality''  $s\in H^0(M,\ma)\tensor H^0(N,\mb)$, or a map $D:H^0(M,\ma)^*\to  H^0(N,\mb)$.
\end{enumerate}

Now suppose we are able to manufacture points $x_1,\dots, x_m\in M$ and points $y_1,\dots,y_m\in N$ so that $s(x_a,y_b)=0$ if $a\neq b$ and non-zero (as an element of $\ma_{x_a}\tensor\mb_{y_a}$)
if $a=b$. Set $\alpha_a= s(x,y_a)\in H^0(M,\ma)\tensor \mb_{y_a}$ and $\beta_a=s(x_a,y)\in \ma_{x_a}\tensor H^0(N,\mb)$.

\begin{lemma}\label{books}
\begin{enumerate}
\item
\begin{equation}\label{bb}
s=\sum _{a=1}^ m s(x_a,y_a)^{-1}\alpha_a\beta_a.
\end{equation}
\item The duality map $D:H^0(M,\ma)^*\to  H^0(N,\mb)$ is an isomorphism. It carries the element (well defined up-to scalars) ``evaluation at $x_a$"  in $H^0(M,\ma)^*$ to the section $\beta_a$ (up-to scalars).
\end{enumerate}
\end{lemma}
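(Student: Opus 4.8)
The plan is to run a ``diagonalization'' argument. The hypothesis that $s(x_a,y_b)$ vanishes for $a\neq b$ and is a nonzero vector of the line $\mathcal{A}_{x_a}\otimes\mathcal{B}_{y_a}$ for $a=b$ should force $\{\alpha_a\}$ and $\{\beta_a\}$ to be bases (after trivializing the relevant fibers); once that is known, $s$ must be ``diagonal'' in the product basis $\{\alpha_a\otimes\beta_b\}$, and the coefficients are pinned down by evaluation. Part (2) is then a one-line computation.

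First I would show $\{\alpha_1,\dots,\alpha_m\}$ is a basis of $H^0(M,\mathcal{A})$. Since $\alpha_a=s(\cdot,y_a)$ is the restriction of $s$ to $M\times\{y_a\}$, evaluation at $x_b$ gives $\operatorname{ev}_{x_b}(\alpha_a)=s(x_b,y_a)\in\mathcal{A}_{x_b}\otimes\mathcal{B}_{y_a}$, which is $0$ for $a\neq b$ and invertible, inside the line $\mathcal{A}_{x_a}\otimes\mathcal{B}_{y_a}$, for $a=b$. Fixing trivializations of all the lines $\mathcal{A}_{x_a}$ and $\mathcal{B}_{y_a}$, so that $\alpha_a\in H^0(M,\mathcal{A})$, $\beta_a\in H^0(N,\mathcal{B})$ and $s(x_a,y_a)\in\Bbb{C}^*$, this says the matrix $\bigl(\operatorname{ev}_{x_b}(\alpha_a)\bigr)_{a,b}$ is diagonal and invertible; hence the $\alpha_a$ are linearly independent, and having cardinality $m=\dim H^0(M,\mathcal{A})$ they form a basis, with $\{\operatorname{ev}_{x_b}\}$ the dual basis up to the scalars $s(x_b,y_b)$. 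By the identical argument on the $N$-side, $\{\beta_a\}$ is a basis of $H^0(N,\mathcal{B})$.

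For part (1): write $s=\sum_{a,b}c_{ab}\,\alpha_a\otimes\beta_b$ in the product basis, and evaluate at a point $(x_c,y_d)$, using that a decomposable section $u\otimes v$ has value $u(x_c)\otimes v(y_d)$ there, together with $\alpha_a(x_c)=\delta_{ac}\,s(x_c,y_c)$ and $\beta_b(y_d)=\delta_{bd}\,s(x_d,y_d)$. This gives $s(x_c,y_d)=c_{cd}\,s(x_c,y_c)\,s(x_d,y_d)$, and comparison with the hypothesis ($s(x_c,y_d)=0$ for $c\neq d$, nonzero for $c=d$) forces $c_{cd}=0$ for $c\neq d$ and $c_{cc}=s(x_c,y_c)^{-1}$, which is exactly \eqref{bb}. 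Reinstating the fibers, the formula is canonical since $s(x_a,y_a)^{-1}\in\mathcal{A}_{x_a}^*\otimes\mathcal{B}_{y_a}^*$ exactly cancels the twists carried by $\alpha_a\otimes\beta_a$. For part (2): applying $\xi$ to the first factor of \eqref{bb} gives $D(\xi)=\sum_a s(x_a,y_a)^{-1}\,\xi(\alpha_a)\,\beta_a$, whence $D(\operatorname{ev}_{x_b})=\beta_b$ using the diagonal evaluations from Step 1; since $D$ then sends the basis $\{\operatorname{ev}_{x_b}\}$ of $H^0(M,\mathcal{A})^*$ to the basis $\{\beta_b\}$ of $H^0(N,\mathcal{B})$, it is an isomorphism, and it carries ``evaluation at $x_a$'' to $\beta_a$ up to scalars, as claimed.

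I do not expect a serious obstacle: once the Künneth identification $H^0(M\times N,\mathcal{A}\boxtimes\mathcal{B})=H^0(M,\mathcal{A})\otimes H^0(N,\mathcal{B})$ is granted (it is part of the given setup, and holds for the quotient stacks in Section~\ref{Schubert}), everything is finite-dimensional linear algebra. The only thing genuinely demanding care is the bookkeeping of the one-dimensional fibers $\mathcal{A}_{x_a},\mathcal{B}_{y_a}$ and the attendant ``up to scalars'' ambiguity in $\operatorname{ev}_{x_a}$ and in $\beta_a$ individually — which is why the statement is phrased that way, even though the identity \eqref{bb} is itself unambiguous.
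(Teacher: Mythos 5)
Your proposal is correct and follows essentially the same route as the paper's proof: establish that $\{\alpha_a\}$ and $\{\beta_a\}$ are bases via the diagonal evaluation matrix, expand $s$ in the product basis, evaluate at the points $(x_a,y_b)$ to pin down the coefficients, and read off (2) from \eqref{bb}. You merely spell out the steps the paper labels as ``clear'' and ``easy,'' including the careful handling of the line twists, which is done correctly.
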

\begin{proof}
From the given data, it is clear that $\{\alpha_a:a\in[m]\}$ (resp.  $\{\beta_a:a\in[m]\}$) are linearly independent (we can ignore the twists by constant lines) and hence form a basis of
$H^0(M,\ma)$ (resp. $H^0(N,\mb)$) respectively. Therefore there is an expression of the form
$$s=\sum_{a=1}^m\sum_{b=1}^m \gamma_{a,b} \alpha_a\beta_b$$
with $\gamma_{a,b}\in \ma_{x_a}^{-1}\tensor\mb_{y_b}^{-1}$. Evaluating this equation at  points of the form $(x_a,y_b)$ , we obtain ~\eqref{bb}. It is easy to see that (ii) follows from
\eqref{bb}.
\end{proof}

\section{Proof of Proposition \ref{strange}, Part {\textnormal{a}}}\label{setting}

An outline of the proof of Proposition \ref{strange} (a), i.e., the equality \eqref{morpheusnew}, is provided in Section \ref{outline}, after some notation has been introduced. The basic idea is to obtain enumerative interpretations
for the ranks of $\clar$, $\vr$ and $\vl$. The reckoning for $\clar$ admits deformations: in a particular degenerate situation, the enumerative problem breaks up into two parts which are identified with the ranks of $\vr$ and $\vl$, and Proposition \ref{strange}, (a) follows.

We first define a number of enumerative problems.   We set $N=r+\ell+2$, and introduce the notation
$I^{p}=\{i^p_1<\dots<i^p_{r+1}\}$ for $p\in S=\{p_1,\dots,p_n\}$,
by the formula  $\lambda_{i}^{(a)}=(\ell+1)+a-i^{p_i}_a$ for $a\in[r+1], i\in[n]$. Since $\vec{\lambda}$ is an $n$-tuple of normalized weights, $i^p_{r+1}=N$ and $i^p_1\neq 1$. We are next going to establish three enumerative interpretations: for $\rk\clar$, $\rk\vr$ and $\rk\vl$.

\subsubsection{The enumerative problem corresponding to $\rk\clar$}\label{bb3}
 We first note that $\clar$ and $\clal$ have the same rank (by their enumerative interpretation and  Grassmann duality, see Section ~\ref{grdual}) and that these ranks equal the following enumerative number: Let $\mw=\mathcal{O}^{\oplus N}$.  Choose a general point $\me\in\Fl_S(\mw)$. The enumerative problem is: Count subbundles $\mv$ of degree $0$ and rank $r+1$ of $\mw$ such that $\mv_{p_i}\in \Omega_{\lambda_i}(E^{p_i}_{\bull})\subseteq \Gr(r+1,{\mw}_{p_i})$,  $i=1,\dots,n$.

Note that this is the same problem as counting classical intersection of generic translates of Schubert varieties (corresponding to $\lambda_i$) in a  Grassmannian $\Gr(r+1,N)$.
\subsubsection{The enumerative problem corresponding to $\rk\vr$}\label{ceremonie1}
 Let $\mw'=\mathcal{O}(1)\oplus\mathcal{O}^{\oplus N-2}$. Choose a general point $\me'\in \Fl_S(\mw')$. Count subbundles $\mv$ of degree $0$ and rank $r+1$ of $\mw'$ such that $\Omega_{\lambda_i}(E'^{p_i}_{\bull})\subseteq \Gr(r+1,{\mw'}_{p_i})$, $i=1,\dots, n$.
\subsubsection{The enumerative problem corresponding to $\rk\vl$}\label{ceremonie2}
The enumerative problem corresponding to $\vl$ is of counting subbundles of $\mw'$ of degree $0$ and rank $\ell+1$ of $\mw'=\mathcal{O}(1)\oplus\mathcal{O}^{\oplus N-2}$ subject to incidence conditions at the points $p\in S$. We ``Grassmann dualize'' this problem by considering the dual of quotients $\mv=(\mw'/\mt)^*\subseteq (\mw')^*$. The association $\mt$ with $\mv$ is one-one and we may write down equivalent conditions on $\mv$.

The resulting enumerative problem is the following: Let $\overline{\mw}=\mathcal{O}(-1)\oplus\mathcal{O}^{\oplus N-2}$. Choose a general point $\overline{\me}\in\Fl_S(\overline{\mw})$. Count subbundles $\mv$ of degree $-1$ and rank $r$ of $\overline{\mw}$ such that $\mv_{p_i}\in \Omega_{\lambda_i}(\overline{E}^{p_i}_{\bull})\subseteq \Gr(r+1,\overline{\mw}_{p_i})$, $i=1,\dots, n$.

\subsection{Outline of proof of Proposition \ref{strange}, Part {\textnormal{a}}}\label{outline}
The main idea is to degenerate the enumerative problem corresponding to $\clar$ by replacing $\mathcal{O}^{\oplus N}$ by its simplest degeneration
 $\mathcal{O}(1)\oplus\mathcal{O}(-1)\oplus\mathcal{O}^{\oplus (N-2)}$. The  quot-scheme corresponding to $\mathcal{O}^{\oplus N}$ (actually a Grassmannian) degenerates (flatly) into a union of two smooth quot schemes intersecting transversally.

 For every bundle  $\mw$ of rank $N$ and degree $0$ we can pose an enumerative problem (to ensure conservation of numbers, we will work with versions of the enumerative problems where one is looking for points in projective varieties, see Section \ref{bertram}): Fix  a general $\me\in \Fl_S(\mw)$ and ``count'' coherent subsheaves $\mv\subseteq\mw$ of degree $0$ and rank $r+1$ such that $\mv_p\to \mw_p/E^p_{i^{p}_{a}}$ has kernel of dimension at least $a$ for all $a\in[r+1]$, and $p\in S$. Note that for some $\mw$ the above problem is not enumerative,
 i.e., the solution scheme is not of the expected dimension (or non-reduced). But we will show that for  $\mathcal{O}^{\oplus N}$ and $\mathcal{O}(1)\oplus\mathcal{O}(-1)\oplus\mathcal{O}^{\oplus (N-2)}$, this  problem is enumerative.

  When $\mw=\mathcal{O}^{\oplus N}$, this is the counting problem of Section \ref{bb3}. Now replace $\mathcal{O}^{\oplus N}$ by $\mathcal{O}(1)\oplus\mathcal{O}(-1)\oplus\mathcal{O}^{\oplus (N-2)}$. The degenerate enumerative number breaks up into two parts and we identify these parts as ranks of conformal blocks. We show that the enumerative counts  are conserved in Section \ref{conservation}, and hence obtain the equality \eqref{morpheusnew}.

\subsection{Quot schemes and degenerations}
Let $\quot$ be the quot scheme of degree $0$ and rank $ r+1$-subsheaves of
$\mw=\mathcal{O}(1)\oplus\mathcal{O}(-1)\oplus\mathcal{O}^{\oplus N-2}$.
 Note that the surjection $\mw\to \mathcal{O}(-1)$ and
the inclusion $\mathcal{O}(1)\to \mw$ are canonical (up-to scalars). Hence the corresponding kernel and quotients are canonical.

Consider the evenly split sheaves  $\mw'=\mathcal{O}(1)\oplus\mathcal{O}^{\oplus N-2}\subseteq \mw$ and $\tmw=\mw/\mathcal{O}(1)$. Let $\quot_1$ be (smooth) quot scheme of degree $0$ rank $r+1$ subsheaves of $\mw'$ and $\quot_2$ the (smooth) quot scheme of degree $-1$  and rank $r$ subsheaves of $\tmw$.
\begin{lemma}\label{ceremonienew}
\begin{enumerate}
\item $\quot=\quot_1\cup \quot_2$ where $\quot_1$ and $\quot_2$ are smooth subschemes and  $C=\quot_1\cap\quot_2$ is a subscheme of smaller
 dimension.
 \item $\quot$ is smooth in the complement of $\quot_1\cap\quot_2$.
 \item Points of $\quot\setminus\quot_1\cap\quot_2$ correspond to subbundles of $\mw$.
 \item The dimensions of $\quot_1$ and $\quot_2$ equal $(r+1)(\ell+1)$.
 \end{enumerate}
\end{lemma}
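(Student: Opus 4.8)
The plan is to reduce everything to slope estimates for subbundles of split bundles on $\pone$. The basic tool is the standard fact that for $E=\bigoplus_{i=1}^m\mathcal{O}_{\pone}(a_i)$ with $a_1\geq\dots\geq a_m$, every rank-$k$ subbundle of $E$ has degree $\leq a_1+\dots+a_k$ (apply $\wedge^k$ and compare with the top summand of $\wedge^kE$), with equality attained by the obvious subbundle. For us this says: the maximal degree of a subbundle of rank $r$ or $r+1$ of $\mw=\mathcal{O}(1)\oplus\mathcal{O}(-1)\oplus\mathcal{O}^{\oplus N-2}$ and of $\mw'=\mathcal{O}(1)\oplus\mathcal{O}^{\oplus N-2}$ is $1$, while that of a rank-$(r+1)$ subbundle of $\tmw=\mathcal{O}(-1)\oplus\mathcal{O}^{\oplus N-2}$ is $0$. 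Note also that any subsheaf of $\mw$, being torsion-free on the curve $\pone$, is automatically locally free.

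First I would establish the set-theoretic content of (1) and the description of the two pieces. Given $\mv\subseteq\mw$ of rank $r+1$ and degree $0$, consider the composite $\mv\hookrightarrow\mw\twoheadrightarrow\mathcal{O}(-1)$ onto the canonical quotient line bundle (well defined up to scalar since $\Hom(\mathcal{O}(1),\mathcal{O}(-1))=\Hom(\mathcal{O},\mathcal{O}(-1))=0$). If it vanishes then $\mv\subseteq\mw'=\ker(\mw\to\mathcal{O}(-1))$, so $\mv\in\quot_1$. If not, its kernel $\mk$ is a rank-$r$ subbundle of $\mv$ with $\deg\mk\geq1$ (the image is a nonzero subsheaf of $\mathcal{O}(-1)$, hence of degree $\leq-1$), and $\mk\subseteq\mw'$; the slope bound forces $\deg\mk=1$ and $\mk$ saturated in $\mw'$, and a rank-$r$ degree-$1$ subbundle of $\mw'=\mathcal{O}(1)\oplus\mathcal{O}^{\oplus N-2}$ must contain the canonical $\mathcal{O}(1)$ (otherwise it meets it trivially and embeds in $\mw'/\mathcal{O}(1)\cong\mathcal{O}^{\oplus N-2}$, forcing degree $\leq0$). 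Hence $\mathcal{O}(1)\subseteq\mk\subseteq\mv$, i.e. $\mv\in\quot_2$. This gives $\quot=\quot_1\cup\quot_2$, identifies $\quot_1$ with the closed locus $\{\mv\subseteq\mw'\}$ and $\quot_2$ with $\{\mathcal{O}(1)\subseteq\mv\}$ (equivalently $\mv\mapsto\mv/\mathcal{O}(1)\subseteq\tmw$), and shows that $C=\quot_1\cap\quot_2$ is the locus $\mathcal{O}(1)\subseteq\mv\subseteq\mw'$, i.e. (mod $\mathcal{O}(1)$) the Quot scheme of rank-$r$ degree-$(-1)$ subsheaves of $\mw'/\mathcal{O}(1)\cong\mathcal{O}^{\oplus N-2}$.

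For smoothness and the dimension counts I would use deformation theory for Quot schemes on $\pone$. At $[\mv]\in\quot_1$, writing $\mathcal{Q}=\mw'/\mv$, the tangent space is $\Hom(\mv,\mathcal{Q})$ and the obstruction space is $\Ext^1(\mv,\mathcal{Q})=H^1(\mv^\vee\otimes\mathcal{Q})$, since $\mv$ is locally free. As $\mw'$ is globally generated, so is $\mathcal{Q}$, so $\mathcal{Q}$ has only nonnegative summands; and $\mv\subseteq\mw'$ forces the top summand of $\mv$ to be $\leq1$, so $\mv^\vee$ has summands $\geq-1$. Hence $\mv^\vee\otimes\mathcal{Q}$ has summands $\geq-1$, its $H^1$ vanishes, $\quot_1$ is smooth, and $\dim_{[\mv]}\quot_1=\dim\Hom(\mv,\mathcal{Q})=\chi(\mv^\vee\otimes\mathcal{Q})=\deg+\rk=(r+1)+(r+1)\ell=(r+1)(\ell+1)$ by Riemann--Roch. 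The identical argument applies to $\quot_2$ — here one notes $\Hom(\tmw,\mathcal{O}(c))=0$ for $c\leq-2$, so every rank-$(\ell+1)$ quotient of $\tmw$ has summands $\geq-1$, again killing the obstruction — giving smoothness and dimension $(r+1)(\ell+1)$, which is (4); and for $C$ the same computation gives dimension $(r+1)(\ell+1)-1$, the ``smaller dimension'' asserted in (1). Part (2) is then formal: if $x\in\quot_1\setminus C$ then $\quot_2$ is a closed subset not containing $x$, so a neighborhood of $x$ in $\quot$ lies in $\quot_1$ and there $\quot$ coincides with the smooth scheme $\quot_1$; symmetrically for $x\in\quot_2\setminus C$.

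Finally, for (3), let $\mv\in\quot$ fail to be a subbundle of $\mw$; its saturation $\overline{\mv}\subsetneq\mw$ then has degree $\geq1$, hence exactly $1$ by the slope bound, and is a rank-$(r+1)$ degree-$1$ subbundle. Re-running the dichotomy argument on $\overline{\mv}$ shows first that $\overline{\mv}\subseteq\mw'$ (otherwise one produces a rank-$r$ subbundle of $\mw'$ of degree $\geq2$) and then that $\mathcal{O}(1)\subseteq\overline{\mv}$, with $\overline{\mv}/\mathcal{O}(1)\cong\mathcal{O}^{\oplus r}$; in particular $\mv\subseteq\overline{\mv}\subseteq\mw'$, so $\mv\in\quot_1$. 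What remains — that $\mv$ also lies in $\quot_2$, equivalently that the length-one torsion $\overline{\mv}/\mv$ does not ``point in the $\mathcal{O}(1)$-direction'' — is precisely the point that forces one to remember the relevant $\quot$ here is the flat degeneration of $\Gr(r+1,N)$ attached to $\mathcal{O}^{\oplus N}\rightsquigarrow\mw$ (equivalently the closure of the subbundle locus): the summand $\mathcal{O}(1)$ is created only in the central fibre, and a flat limit of honest subbundles whose saturation contains $\mathcal{O}(1)$ must itself contain $\mathcal{O}(1)$, so $\mv$ lands in $C$. I expect this compatibility between the scheme structure of $\quot$ near $C$ and the two component descriptions to be the one genuinely delicate step; everything else is the slope bookkeeping above together with the standard $\Ext$-vanishing for Quot schemes of globally generated sheaves on $\pone$.
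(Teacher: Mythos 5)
Your decomposition $\quot=\quot_1\cup\quot_2$ via the dichotomy on the composite $\mv\to\mathcal{O}(-1)$, and your dimension counts for $\quot_1$, $\quot_2$ and $C$, are essentially the paper's argument (the paper quotes Proposition \ref{bigoneprop} for the evenly split bundles $\mw'$ and $\tmw$ where you redo the $\Ext^1$-vanishing by hand; either is fine). Two steps, however, have genuine gaps.

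First, part (2) is not formal. Knowing that $\quot$ coincides \emph{set-theoretically} with $\quot_1$ near a point of $\quot_1\setminus C$ does not make $\quot$ smooth there: the Quot scheme of $\mw$ could be non-reduced at that point with reduction $\quot_1$. What is needed is vanishing of the obstruction space of $\quot$ itself, namely $\Ext^1(\mv,\mw/\mv)=H^1(\mv^\vee\otimes\mw/\mv)$, whereas your computations control $\Ext^1(\mv,\mw'/\mv)$ and $\Ext^1(\mv/\mathcal{O}(1),\tmw/(\mv/\mathcal{O}(1)))$, which are different groups. This is where the paper does real work: at points of $A$ it uses $\mv\cong\mathcal{O}^{\oplus r+1}$ and the surjection $H^1(\mv^\vee\otimes\mw)\to H^1(\mv^\vee\otimes\mw/\mv)$; at points of $B$ the quotient $\mw/\mv$ may a priori have very negative summands ($\mw$ is not globally generated), and the paper circumvents this by noting that off $C$ the subsheaf $\ms=\mw'$ still surjects onto $\mw/\mv$ and $H^1(\mv^\vee\otimes\ms)=0$ because $\mv\cong\mathcal{O}(1)\oplus\mathcal{O}(-1)\oplus\mathcal{O}^{\oplus r-1}$. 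You need to supply this argument; it is the heart of parts (1) and (2).

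Second, your suspicion about part (3) is well founded, but the repair you sketch does not work. For $\quot$ as actually defined (all degree-$0$, rank-$(r+1)$ subsheaves of $\mw$), statement (3) is false: let $s\in H^0(\pone,\mathcal{O}(1))$ vanish at $p$ and set $\mv=s\cdot\mathcal{O}\oplus\mathcal{O}^{\oplus r}\subseteq\mw'\subseteq\mw$. This is a point of $\quot_1$ not lying in $\quot_2$ (it does not contain the canonical $\mathcal{O}(1)$), yet $\mw/\mv$ has torsion at $p$. Replacing $\quot$ by the closure of the subbundle locus does not help: $\quot_1$ is irreducible with the subbundles of $\mw'$ dense, so this $\mv$ already lies in that closure. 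Note that the paper's own proof never addresses (3), and the degeneration argument of Section \ref{conservation} does not rely on it: what is used is that the \emph{solutions of the enumerative problem} for general flags lie in $A\cup B$ and are honest subbundles, and that comes from Kleiman transversality applied to the evenly split bundles $\mw'$ and $\tmw$ (Lemma \ref{generalposition}(c)), not from a statement about all of $\quot\setminus C$. So rather than trying to prove (3) as stated, you should replace it by that genericity assertion.
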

\subsection{Proof of Lemma ~\ref{ceremonienew}}\label{proofofceremonie}

Consider a $\mv\subseteq\mw$
such that $\mq=\mw/\mv$ is a point of $\quot$. Clearly $\mv$ cannot have factors of
the type $\mathcal{O}(a)$, $a\geq 2$ as these do not admit non-zero maps to $\mw$. Any map $\mathcal{O}(1)\to \mw$ has
image inside the standard copy  $\mathcal{O}(1)\subseteq \mw$. Therefore there cannot be more than one $\mathcal{O}(1)$ in  such a $\mv$.

The maps $\quot_j\to\quot,\ j=1,2$ are clear. Let $\mv\subseteq\mw$ correspond to a point on $\quot$. If the composite $\mv\to\mw\to\mathcal{O}(-1)$ is the zero map then the point is in $\quot_1$.
Otherwise, $\mv$ has a factor of $\mathcal{O}(1)$ which maps isomorphically to the canonical copy of $\mathcal{O}(1)$ in $\mw$. So $\mv$ is determined by the sub-sheaf $\mv/\mathcal{O}(1)\subseteq \tmw$ i.e., a point of $\quot_2$.

Points in $C$ correspond to subsheaves of $\mw'/\mathcal{O}(1)$ of degree $-1$ and rank $r$. It is easy to show the following: $$\dim\quot_1=\dim\quot_2=\dim C +1$$

Let $A=\quot_1\setminus C\subseteq \quot$ and $B=\quot_2\setminus C$. At points of $A$, $\mv$ is of the form $\mathcal{O}^{\oplus r+1}$. In this case clearly $H^1(\mathbb{P}^1, \mv^*\tensor\mw)=0$. Long exact sequences in cohomology  imply that $H^1(\mathbb{P}^1, \mv^*\tensor\mw)=0$ surjects on to  $H^1(\mathbb{P}^1, \mv^*\tensor\mw/\mv)$, and
therefore the last group is zero. Therefore $\quot$ is smooth at such points.

At points of $B$,  $\mv$ is isomorphic to $\mathcal{O}(1)\oplus\mathcal{O}(-1)\oplus\mathcal{O}^{r-1}$  and $\ms=\mathcal{O}(1)\oplus\mathcal{O}^{\oplus{N-2}}$ surjects onto $\mw/\mv$. It is easy to see that $H^1(\mathbb{P}^1, \mv^*\tensor  \ms)=0$ and hence the required vanishing $H^1(\mathbb{P}^1, \mv^*\tensor\mw/\mv)=0$ follows. Therefore $\quot$ is smooth at such points.

\subsection{Degeneration of the Gromov-Witten numbers}\label{ceremonie}
With $\mw$ as above, consider a generic point $\me\in \Fl(\mw)$. For $p\in S$, the fiber $\mw_p$ has a canonical quotient $\tau_p:\mw_p\to L_p$ (corresponding to  $\tau:\mw\to \mathcal{O}(-1)$) and a canonical line $M_p\subseteq \mw_p$ (corresponding to $\mathcal{O}(1)\subseteq \mw$). In addition, $\tau_p(M_p)=0$. Genericity of the flags implies that $E^p_{1}$ surjects onto $L_p$ via $\tau_p$ and $E^p_{1}\cap M_p=\{0\}$. The induced flags $\me'\in \Fl_S(\mw')$ (note $\ker\tau=\mw'$) are therefore suitably general. Note that $E'^{p}_{\bull}$ is the flag
$$0\subseteq E^p_2\cap \mw'_p\subseteq E^p_3\cap\mw'_p\subseteq\dots\subseteq E^p_N\cap\mw'_p=\mw'_p.$$

Similarly the induced flags $\overline{\me}\in \Fl_S(\overline{\mw})$ (note $\overline{\mw}=\mw/\mathcal{O}(1)$) are suitably generic. Note that $\overline{E}^p_{\bull}$ is the flag (here $\gamma:\mw\to \mw/\mathcal{O}(1)$)
\begin{equation}\label{secondkind}
0\subseteq \gamma(E^p_1)\subseteq \gamma(E^p_2)\subseteq\dots\subseteq \gamma(E^p_{N-1})=\tmw_p.
\end{equation}

We now analyze the (degenerate) enumerative problem in Sections \ref{ceremony1} and \ref{ceremony2} below.

 \subsection{Part of the intersection in $\quot_1$}\label{ceremony1}
Take a coherent subsheaf $\mv\subseteq  \ker \tau=\mw'$ with the corresponding induced (generic) flags. This enumerative problem is the same as counting such subsheaves of
$\mw'$ such that $\mv_p\to \mw_p/E^p_{i^{p}_{a}}$ has kernel of dimension at least $a$ for all $a\in[r+1]$, and $p\in S$. So we need $\mv_p\to  \mw'_p/E^p_{i^{p}_{a}}\cap \mw'_p$ to have  kernel of dimension at least $a$. But $E^p_{i^{p}_{a}}\cap\mw'_p$ has rank $i^{p}_{a}-1$. Let $J^p=\{i^p_1-1<\dots< i^p_N -1\}$ and $E'^p_{\bull}$ the induced flag on $\mw'_p$.
The conditions on $\mv$ are therefore that $\mv_p\to \mw'_p/E'^p_{j^p_{a}}$ has kernel of rank at least $a$. This is just the compactified enumerative problem of subsheaves of the
evenly split bundle $\mw'$ of degree $0$ and rank $r+1$ with Schubert conditions given by $\lambda_i$ at $p_i$:
Note
$$N-(r+1)+a-i^p_{a}=(N-1)-(r+1)+a -(i^p_{a}-1)=\lambda^{(a)}_i, p_i=p.$$
This is the enumerative problem ~\ref{ceremonie1}. Using Section ~\ref{bertram}, we conclude that
the intersection in $\quot_1$ has $\rk \vr$ number of points and that the intersection lies entirely in the open part $A$ (see Lemma \ref{generalposition}, (c)).
\subsection{Part of the intersection in $\quot_2$}\label{ceremony2}
Take a coherent subsheaf $\tmvv\subseteq  \tmw$ with the corresponding induced (generic) flags.
This enumerative problem is the same as counting such subsheaves (of the evenly-split bundle
$\tmw$) so that, setting $\mv$ to be the inverse image of $\tmvv$ under the surjection $\mw\to\tmw$,
$\mv_p\to \mw_p/E^p_{i^{p}_{a}}$ has kernel of dimension at least $a$.

Let $\bar{E}^p_{\bull}$ be the induced flag on $\tmw_p$ and $\bar{E}^p_{N-1}=\tmw_p$. By an elementary calculation,
the kernel of $\tmvv_p\to\tmw_p/\bar{E}^p_{i^p_{a}}$ is isomorphic to the kernel of $\mv_p\to \mw_p/E^p_{i^{p}_{a}}$ for
$a\leq r$. For this, write exact sequences
$$0\to \mathcal{O}(1)_p\to \mv_p\to \tmvv_p \to 0, \mbox{ and } \ 0\to \mathcal{O}(1)_p\to \mw_p/E^p_{i^{p}_{\ell}}\to \tmw_p/\bar{E}^p_{i^p_{\ell}}\to 0.$$
Also note that $\bar{E}^p_{i^p_{a}}$ has rank $i^p_{a}$.

So we want to count subsheaves of $\tmw$ of rank $r$ and degree $-1$ with Schubert conditions given by the data
$\lambda_i$ at $p=p_i$:
note $$(N-1)- r+\ell-i^p_{a}=N-(r+1)+\ell-i^p_{a} =\lambda^{(a)}_i,\ a\leq r, \ \  \ N-(r+1)+(r+1)-i^p_{r+1}=0.$$
Therefore the part of the enumerative problem in  $\quot_2$ is the enumerative problem ~\ref{ceremonie2}, and has $\rk \vl$ points,
and these points lie entirely in $B$ (again using Lemma \ref{generalposition} (c)).

 \subsection{Proof of ~\eqref{morpheusnew}}\label{conservation}
Consider a family of vector bundles $\mw$ over $\pone\times T$  where  $T$ is a smooth curve such that $\mw_t$ is isomorphic to $\mathcal{O}^{\oplus n}$ for
$t\neq t_0$ and isomorphic to $\mathcal{O}(1)\oplus\mathcal{O}(-1)\oplus\mathcal{O}^{\oplus (N-2)}$ for $t=t_0$. Let $\me_{t_0}\in\Fl_S(\mw_{t_0})$ be generic and consider a family of
$\me_t\in \Fl_S(\mw_t)$ specializing to $\me_{t_0}$ and specializing at a fixed point $t_1$ to a general element of $\Fl_S(\mw_{t_1})$ with $t_1\neq t_0$.

We have a relative quot scheme (of quotients of degree $0$ and rank $\ell+1$ of $\mw_t$) $\pi:\widetilde{\quot}\to T$ and the family of solutions to the enumerative problem $i:\mathcal{C}\hookrightarrow\widetilde{\quot}$. We note:
\begin{enumerate}
\item $\pi$ is proper and $i$ is closed.
\item $\pi$ is smooth over $T-\{t_0\}$.
\item Each irreducible component of $\mc$ has dimension at least one. This is because the map $\mc\to T$ is the pull back of an universal $\mc'\to T'$ by a map $T\to T'$.
It turns out that $\mc'$ and $T'$ have the same dimension and hence the assertion follows from standard properties of dimensions of fibers.
\item $\pi$ is smooth at $A\cup B\subseteq \pi^{-1}(t_0)$. $\pi$ is not smooth at $C\subseteq \pi^{-1}(t_0)$.
\item $(\pi\circ i)^{-1}(t_0)\subseteq A\cup B$ and $(\pi\circ i)^{-1}(t_0)$ is a reduced scheme consisting of finitely many points.
\end{enumerate}
By shrinking $T$ if necessary around $t_0$ we may assume that each component of $\mathcal{C}$ surjects onto $T$. Since $(\pi\circ i)^{-1}(t_0)$ is a reduced scheme consisting of finitely many points and each irreducible component of $\mathcal{C}$ has dimension at least one, we see that for $c\in \mathcal{C}$ over $t_0$, $\mathcal{O}_{C,c}$ is a discrete valuation ring (the Zariski tangent space has dimension at most one) . By shrinking $T$ we can assume $\mathcal{C}$ to be smooth and equidimensional of dimension $1$. It is now easy to see that $\mathcal{C}\to T$ is finite and \'etale over a neighborhood of $t_0$. The generic fiber of $\mathcal{C}\to T$ has $\rk\clar$ number of points, and the special fiber over $t_0$ has $\rk\vl +\rk\vr$ number of points. Therefore ~\eqref{morpheusnew} holds.

\subsection{The equality \eqref{morpheusnew} stated in terms of quantum cohomology}\label{leo}
We  return to the setting of Section \ref{setting}. Assume that $\ell$ is the critical level for $\vec{\lambda}$.
The rank $\rk \clar$ is  the coefficient of the class of a point $[\operatorname{pt}]$ in the classical product
\begin{equation}\label{old12}
\sigma_{\lambda_1}\cdot\sigma_{\lambda_2}\cdot\dots\cdot\sigma_{\lambda_n}\in H^*(\Gr(r+1,r+\ell+2)).
\end{equation}

Let $\lambda$ be the $(r+1)\times\ell$ Young diagram $(\ell,0,\dots,0)$. Then $\rk \vr$ (there is a similar expression for $\rk\vl$)  is
the coefficient of $q[\operatorname{pt}]$ in the small quantum product
\begin{equation}\label{new12}
\sigma_{\lambda_1}\star\dots\star\sigma_{\lambda_n}\star\sigma_{\lambda}\in QH^*(\Gr(r+1,r+\ell+1)).
\end{equation}
Note that the Grassmannian appearing in \eqref{new12} is different from the one in \eqref{old12}. In terms of Gromov-Witten numbers (see Remark \ref{notateo}), the sum of $$\langle \sigma_{\lambda_1},\dots,\sigma_{\lambda_n},\sigma_{(\ell,0,\dots,0)}\rangle_1,\text{ and } \langle \sigma_{\lambda^T_1},\dots,\sigma_{\lambda^T_n},  \sigma_{(r,0,\dots,0)}\rangle_1$$ computed for  $\Gr(r+1,r+\ell+1)$ and for $\Gr(\ell+1,r+\ell+1)$ respectively equals
the classical coefficient $\langle \sigma_{\lambda_1},\dots,\sigma_{\lambda_n}\rangle_0$ computed for $\Gr(r+1,r+\ell+2)$.

\begin{remark}
One may ask whether structure constants in the classical cohomology of a $G/P$ (the above is a special case where   $G/P=\Gr(r+1,r+\ell+2)$) similarly decompose as  sums of quantum cohomology structure constants (for possibly different groups). Any classical cohomology structure constant can be interpreted as the enumerative problem of counting suitable reductions of  the structure group of the trivial principal $G$-bundle
on $\pone$ (subject to incidence conditions). We may replace the trivial principal $G$ bundle by its simplest degeneration and then look at the corresponding enumerative problem, and ask if it breaks up into smaller pieces.
\end{remark}
\section{Conformal blocks as generalized theta functions}\label{verlindesection}
\subsection{Notation}

 A quasi-parabolic $\operatorname{SL}_{r+1}$ bundle on $\pone$ is a triple $(\mv,\mf,\gamma)$ where  $\mv$ is a vector bundle on $\pone$ of  rank $r+1$ and deg $0$ with a given trivialization $\gamma:\det\mv\leto{\sim}\mathcal{O}$, and  $\mf=(F^{p_1}_{\bull},\dots, F^{p_n}_{\bull})\in \Fl_S(\mv)$ is a collection of complete flags  on fibers over $p_1,\dots,p_n$. Let $\parbun_{r+1}$ be the stack parameterizing   quasi-parabolic $\operatorname{SL}_{r+1}$ vector bundles on $\pone$.

\subsection{Generalized theta functions}\label{nara}
It is well known that conformal blocks for $\sL_{r+1}$ on a smooth projective curve can be identified with the space of sections, called ``generalized theta functions'', of a suitable line bundle on the moduli space of vector bundles of rank $r+1$ with trivial determinant on that curve (see the survey \cite{Sorger}). A parabolic generalization for $\sL_{r+1}$ was proved in \cite{pauly} which we recall now (but only for $\pone$).

Associated to the data  $\vec{\lambda}=(\lambda_1,\dots,\lambda_n)\in P_{\tilde{\ell}}(\sL_{r+1})$, we can form a line bundle $\mathcal{P}(\sL_{r+1},\tilde{\ell},\vec{\lambda})$ on $\parbun_{r+1}$. The fiber over a point $(\mv, \mf,\gamma)$ is a tensor product
$$D(\mv)^{\tilde{\ell}}\tensor\tensor_{i=1}^n \ml_{\lambda_{i}}(\mv_{p_i},F^{p_i}_{\bull}),$$
where $D(\mv)$ is the determinant of cohomology of $\mv$ i.e., the line $\det H^1(\mathbb{P}^1, \mv)\tensor\det H^0(\mathbb{P}^1, \mv)^*$ and the lines $\ml_{\lambda_i}(\mv_{p_i},F^{p_i}_{\bull})$ are as defined in Section ~\ref{classic}.

It is known that the space of generalized theta functions is canonically identified (up-to scalars) with the dual of the space of conformal blocks \cite{pauly}: Let $\xpp=(\pone,p_1,\dots,p_n)\in \operatorname{M}_{0,n}$.
\begin{equation}\label{verlinde}
 H^0(\parbun_{r+1},  \mathcal{P}(\sL_{r+1},\tilde{\ell},\vec{\lambda}))\leto{\sim} (\Bbb{V}_{\sL_{r+1},\vec{\lambda},\tilde{\ell}})_{\xpp}^*.
\end{equation}
The determinant of cohomology $D(\mv)$ of a vector bundle with a given trivialization $\gamma:\det\mv\leto{\sim}\mathcal{O}$ carries a canonical element $\theta$. To construct $\theta$, let  $P\in\pone$ and consider $0\to\mv(-P)\to\mv\to\mv_P\to 0$
which sets up an isomorphism $D(\mv)=D(\mv(-P))$ (note that $\det \mv_P$ has a given trivialization $\gamma_P$). But $\chi(\pone,\mv(-P))=0$ and hence $D(\mv(-P))$ carries a canonical theta section. Hence one gets an element $\theta$ of $D(\mv)$. This element $\theta$ does not depend upon $P$ because otherwise we will get non-trivial functions in $P\in \pone$. We  may also apply the isomorphism \eqref{verlinde} at level $1$ with vacuum representations at $p_i$ (i.e., $\lambda_i=0$) to construct $\theta$. 

The theta section of $D(\mathcal{V}(-P))$ vanishes along the locus where $\dim H^0(\mathcal{V}(-P))\neq 0$. Since $\mv$ is of degree $0$, this is same as saying that $\theta$ vanishes at $\mv$ iff $\mv$ is non-trivial as a vector bundle. 

Therefore  one has a canonical section (also denoted $\theta$) of the determinant of cohomology bundle on the moduli stack of vector bundles on $\pone$ with trivialized determinant. Note that we may normalize $\theta$ by requiring it to be the canonical one when $\mv=\mathcal{O}^{\oplus r+1}$ (although this is not needed, it is true that the $\theta$ as constructed is already normalized).

Multiplication by $\theta\in D(\mv)$ sets up an injective map
\begin{equation}\label{multiply}
H^0(\parbun_{r+1},  \mathcal{P}( \sL_{r+1},\tilde{\ell},\vec{\lambda}))\to H^0(\parbun_{r+1},  \mathcal{P}( \sL_{r+1},\tilde{\ell}+1,\vec{\lambda})).
\end{equation}

\begin{remark}\label{vanishing1}
The image of (\ref{multiply}) necessarily vanishes on points $(\mv, \mf,\gamma)\in\parbun_{r+1}$ with $\mv$ not isomorphic to $\mathcal{O}^{\oplus r+1}$.
\end{remark}

Note that $\parbun_{ r+1}$ has a classical part,  $\parbun_{ r+1}^c$, the open substack  where the underlying vector bundle is trivial. It is easy to see that
$$H^0(\parbun_{ r+1}^c, \mathcal{P}(\sL_{ r+1},\tilde{\ell},\vec{\lambda}))=  {A}_{ r+1,\vec{\lambda}}^*$$
via the map $\pi:\Fl(V)^n\to\parbun_{ r+1}^c$, where $V$ is a vector space of dimension $ r+1$ with trivialized determinant, which sets up  $\parbun_{ r+1}^c$ as a stack quotient
$\Fl(V)^n/\operatorname{SL}(V)$ ( $\pi$ pulls back $\mathcal{P}(\sL_{ r+1},\tilde{\ell},\vec{\lambda})$ to $\tensor_{i=1}^n\ml_{\lambda_i}$). We therefore obtain injective maps
\begin{equation}\label{duall}
H^0(\parbun_{ r+1},  \mathcal{P}( \sL_{r+1},\tilde{\ell},\vec{\lambda}))\to  {A}_{ r+1,\vec{\lambda}}^*,
\end{equation}
which are compatible with ~\eqref{multiply} (because the canonical section of $D(\mv)$ is $1$ on $\Fl(V)^n$ with our normalization). Note that if the map \eqref{duall} is an isomorphism for $\tilde{\ell}$ then it is also an isomorphism for $\tilde{\ell}+1$.

As a final compatibility (given ~\eqref{verlinde}), we note that ~\eqref{duall} is dual to the canonical surjection (up to scalars)
$${A}_{ r+1,\vec{\lambda}}\to (\Bbb{V}_{\sL_{ r+1},\vec{\lambda},\tilde{\ell}})|_{\xpp}.$$

\section{Proof of Theorem \ref{main}}\label{books4}

 To prove that the composite
 \begin{equation}\label{compo}
\vr^*\to\clar^* \to\clal\to \vl,
\end{equation}
is the zero map (see Section \ref{sketch}, Step 2), we use the enumerative interpretations of $\clar$ and $\clal$. In the setting of Sections ~\ref{books1} and \ref{Schubert}, the duality maps are explicit for sections that are defined by the enumerative problem of classical Schubert calculus. To get explicit representatives for the images of $\vr^*$ in $\clar^*$, we use the (degenerate) enumerative problem of Section \ref{ceremonie}. We will show that there are natural isomorphisms (assuming $\ell$ is the critical level for $\vec{\lambda}$)
$$H^0(\parbun_{r+1},  \mathcal{P}( \sL_{r+1},{\ell+1},\vec{\lambda}))\leto{\sim}  {A}_{r+1,\vec{\lambda}}^*$$
 $$H^0(\parbun_{\ell+1},  \mathcal{P}( \sL_{\ell+1},{r+1},\vec{\lambda}^T))\leto{\sim}  {A}_{\ell+1,\vec{\lambda}^T}^*.$$
 By Section \ref{WittenEnumerative}, we already know the equality of ranks in the above isomorphisms. The resulting duality map
 \begin{equation}\label{monday}
H^0(\parbun_{r+1},  \mathcal{P}( \sL_{r+1},{\ell+1},\vec{\lambda}))\leto{\sim} H^0(\parbun_{\ell+1},  \mathcal{P}( \sL_{\ell+1},{r+1},\vec{\lambda}^T))^*
\end{equation}
 can be controlled in terms of a basis of sections coming from the enumerative problem from Section \ref{ceremonie}. This and Remark \ref{vanishing1} allow us to  chase elements through the composition \eqref{compo}.
 \begin{remark}\label{mondayremark}
 The isomorphism \eqref{monday} is an example of a ``parabolic strange duality'' isomorphism \cite{RO}. It reads via \eqref{verlinde}, for $\sum |\lambda_i|=(r+1)(\ell+1)$, as
 $$\vrplusone|_{\xpp}^*\leto{\sim}\vlplusone|_{\xpp}.$$
 The above isomorphism is identified with the isomorphism \eqref{duality}. It is important for our considerations to know that \eqref{duality} is independent of any choices (of $\xpp=(p_1,\dots,p_n)$).
 \end{remark}
\subsection{Enlargement of the duality divisor D}\label{enlarge}
Let $\mm=\parbun_{r+1},\mnn=\parbun_{\ell+1}$, $\mm^c=\parbun_{r+1}^c$ and $\mnn^c=\parbun_{\ell+1}^c$. There are natural maps $\mm^c\to\mm$ and $\mathcal{N}^c\to\mathcal{N}$.

 We will define a divisor in $D\subseteq \mm\times\mathcal{N}$ extending the divisor on $\mm^c\times\mnn^c$ of Section \ref{classic}. A point $(\mv,\mf,\mt,\widetilde{\mathcal{G}})$ is in $D$ iff there is a non-zero map
$\phi:\mv\to \mq$ so that for $a\in [r+1]$ and $i\in [n]$,
\begin{equation}\label{defD2}
\phi(F^{p_i}_{a})\subseteq G^{p_i}_{\ell+1-\lambda_i^{(a)}}.
\end{equation}
where $\mq=\mt^*$ and $\mgg\in\Fl(\mq)$ the flags induced from $\widetilde{\mgg}\in\Fl(\mt)$.

We can recast this into a ``determinantal scheme" as follows: define a locally free sheaf $\mathcal{K}$ on $\mathcal{M}\times\mathcal{N}\times\pone$ as follows. The fiber of
$\mathcal{K}$ at a point $b=(\mv,\mf,\mt,\widetilde{\mathcal{G}})$ is  given by (as a  bundle on $\pone$)
$$0\to\mathcal{K}_b\to \underline{\Hom}(\mv,\mq)\to \oplus_{i=1}^n i_{p_i,*}\frac{\Hom(\mv_{p_i},\mq_{p_i})}{\mathcal{P}_{\lambda_i}(F^{p_i}_{\bull}, G^{p_i}_{\bull})}\to 0.$$

With the conditions that we have assumed, $\chi(\pone,\mathcal{K}_b)=0$, so there is a canonical element $s\in D(\mathcal{K}_b)$ which vanishes iff $x\in D$. The determinant of cohomology $D(\mathcal{K}_b)$ is  given by
$$\tensor_{i=1}^n(\ml_{\lambda_{i}}(F^{p_i}_{\bull})\tensor \ml_{\lambda^T_{j}}(\widetilde{G}^{p_i}_{\bull}))\tensor D(\mv^*\tensor\mt^*).$$ We can rewrite the above as the following:
$$(\tensor_{i=1}^n(\ml_{\lambda_{i}}(F^{p_i}_{\bull}))\tensor D(\mv^*)^{\ell+1})\boxtimes (\tensor_{i=1}^n \ml_{\lambda^T_{i}}(\widetilde{G}^{p_i}_{\bull})\tensor D(\mt^*)^{ r+1})=\ma_{\mv,\mf}\boxtimes \mb_{\mt,\widetilde{\mg}}$$
where we have introduced the notation $\ma=\mathcal{P}( \sL_{r+1},\ell+1,\vec{\lambda})$ and $\mb=\mathcal{P}( \sL_{\ell+1},r+1,\vec{\lambda}^T)$, used the canonical isomorphisms $D(\mv)=D(\mv^*)$ and $D(\mt)=D(\mt^*)$ (the determinants of $\mv$ and $\mt$ are trivialized), and the following lemma
(see e.g., \cite{Fa}, Theorem I.1):
\begin{lemma}Let $\mv$, $\mw$ be vector bundles on $\pone$.
Suppose  $\deg \mv=\deg\mw=0$.  There is a natural isomorphism
$D(\mv\tensor\mw)\to D(\mv)^{\rk \mw}\tensor D(\mw)^{\rk \mv}$
which specializes to the obvious one when $\mv=\mathcal{O}^{\oplus \rk V}$ (note that $D(\mathcal{O})$ is canonically trivial).
\end{lemma}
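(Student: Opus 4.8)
The plan is to deduce the lemma from the standard formalism of the determinant of cohomology, reducing the statement to an identity in $K$-theory. For a proper smooth morphism $\pi\colon\pone\times S\to S$ of relative dimension one and a vector bundle $\mathcal{E}$ on $\pone\times S$, write $D(\mathcal{E})=\det R\pi_{*}\mathcal{E}$ for the Knudsen--Mumford determinant line on $S$. I would use three standard properties: (i) $D$ commutes with base change and sends a short exact sequence $0\to\mathcal{E}'\to\mathcal{E}\to\mathcal{E}''\to 0$ to a canonical isomorphism $D(\mathcal{E})\cong D(\mathcal{E}')\otimes D(\mathcal{E}'')$, so that $D$ descends to a homomorphism $K^{0}(\pone\times S)\to\operatorname{Pic}(S)$ (more precisely to a functor to the associated Picard groupoid, which is what lets one keep track of canonical isomorphisms); (ii) $D(\mathcal{O}_{\pone\times S})=\mathcal{O}_{S}$ canonically, since $R\pi_{*}\mathcal{O}=\mathcal{O}_{S}$; and (iii) (functorial Grothendieck--Riemann--Roch, e.g.\ \cite{Fa}, Theorem~I.1) if a complex has rank $0$ and $c_{1}=0$, then its determinant of cohomology is canonically trivial. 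Specializing $S$ to a point recovers the stated isomorphism of lines, but the relative version is what is actually used in Section~\ref{books4} and costs nothing extra.

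Next I would compute in $K^{0}(\pone\times S)$. Put $a=[\mv]-(\rk\mv)[\mathcal{O}]$ and $b=[\mw]-(\rk\mw)[\mathcal{O}]$, and set $\xi:=a\cdot b$; expanding the product one gets
$$[\mv\otimes\mw]=[\mv]\cdot[\mw]=(\rk\mw)[\mv]+(\rk\mv)[\mw]-(\rk\mv)(\rk\mw)[\mathcal{O}]+\xi .$$
Applying $D$ and using (ii) yields a canonical isomorphism $D(\mv\otimes\mw)\cong D(\mv)^{\otimes\rk\mw}\otimes D(\mw)^{\otimes\rk\mv}\otimes D(\xi)$, so the whole lemma comes down to trivializing $D(\xi)$. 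This is the only point where the hypothesis $\deg\mv=\deg\mw=0$ enters: $a$ and $b$ have rank $0$, and since $c_{1}(a)=c_{1}(\mv)$ and $c_{1}(b)=c_{1}(\mw)$ restrict to $0$ on every fibre $\pone\times\{s\}$, both lie in $\pr_{S}^{*}\operatorname{Pic}(S)$. Hence $\operatorname{ch}(\xi)=\operatorname{ch}(a)\operatorname{ch}(b)$ vanishes in cohomological degrees $0$ and $2$ (so $\rk\xi=0$ and $c_{1}(\xi)=0$), and its first nonzero term $\operatorname{ch}_{2}(\xi)=c_{1}(a)c_{1}(b)$ is pulled back from $S$; since $\pi_{*}$ kills classes pulled back from $S$, Grothendieck--Riemann--Roch gives that $R\pi_{*}\xi$ again has rank $0$ and vanishing $c_{1}$. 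By (iii), $D(\xi)$ is canonically trivial, which proves the displayed isomorphism. For the normalization claim: when $\mv=\mathcal{O}^{\oplus\rk\mv}$ one has $a=0$, hence $\xi=0$ and $D(\xi)$ is the trivial line with its tautological trivialization, and the chain of isomorphisms collapses to the evident identity $D(\mathcal{O}^{\oplus\rk\mv}\otimes\mw)=D(\mw^{\oplus\rk\mv})=D(\mw)^{\otimes\rk\mv}$, as asserted.

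The one genuinely delicate ingredient is property~(iii): that the determinant line of a complex of rank $0$ with vanishing $c_{1}$ is canonically --- not merely abstractly --- trivial, and that the canonical isomorphisms used above compose coherently. This is precisely the functorial refinement of Grothendieck--Riemann--Roch for the determinant of cohomology, and rather than reprove it I would simply cite it (\cite{Fa}, Theorem~I.1, which is in effect this very lemma, or Deligne's construction of the determinant of the cohomology). Everything else is elementary bookkeeping in $K^{0}(\pone\times S)$ together with the base-change and exactness properties of $D$, so modulo that citation the argument is short. (An alternative route, if one prefers to keep things concrete over $\pone$, is to reduce multiplicativity of $D$ to line bundles via a Grothendieck splitting and use that a relative degree-$0$ line bundle on $\pone\times S$ is pulled back from $S$; but the cross-terms then force one to invoke the general Deligne-pairing formula anyway, so the $K$-theoretic argument above is the one I would write.)
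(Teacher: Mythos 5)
Your argument is correct, and it is essentially the paper's route: the paper offers no proof of this lemma at all, deferring entirely to \cite{Fa}, Theorem I.1, and your $K$-theoretic bookkeeping is a standard and valid way to reduce the stated isomorphism to exactly that cited result, with the crux correctly isolated as the canonical (not merely abstract) triviality of the determinant of cohomology of the rank-$0$, fibrewise-degree-$0$ class $\xi=([\mv]-\rk\mv\,[\mathcal{O}])\cdot([\mw]-\rk\mw\,[\mathcal{O}])$. The only discrepancy is notational: the paper's $D(\mv)$ is $\det H^1(\pone,\mv)\tensor\det H^0(\pone,\mv)^*$, the inverse of your $\det R\pi_*$, but the statement is invariant under inverting every line, so nothing changes.
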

Let $\ma^c$ (resp. $\mb^c$) be the pull back of $\ma$ (resp. $\mb$)  under the map $\mm^c\to \mm$ (resp. $\mathcal{N}^c\to\mathcal{N}$). The duality divisor $D$ of $\mm\times \mnn$ pulls back to the duality divisor of Section \ref{classic} in $\mm^c\times\mathcal{N}^c$.

We introduce the following notation:
$$\ma_{\ell}=\mathcal{P}(\sL_{r+1},{\ell},\vec{\lambda}), \ \  \ma_{r}=\mathcal{P}( \sL_{\ell+1},{r},\vec{\lambda}^T).$$

To set the stage for the final verification, note that (comparing ranks and the enumerative interpretation of Section \ref{bb3}), the maps 
\begin{equation}\label{stage}
H^0(\mm,\ma)\leto{\sim} H^0(\mm^c,\ma^c)\leto{\sim} {A}_{r+1,\vec{\lambda}}^*
\end{equation}
are isomorphisms. There are also natural maps (see ~\eqref{multiply})
$H^0(\mm,\ma_{\ell})\to H^0(\mm,\ma)$.
These maps are compatible with ~\eqref{stage} (see \eqref{duall}).
Let us  also note that each $y\in\mathcal{N}$ gives us an element ``evaluation at $y$'' $\eva_y\in H^0(\mathcal{N}, \mb)^*$ which is well defined up-to scalars.
\begin{remark}\label{vanishing2}
By Remark ~\ref{vanishing1}, it follows that the image of $\eva_y$ vanishes in $H^0(\mathcal{N}, \mb_r)^*$ if the underlying bundle of $y$ is non-trivial.
\end{remark}

\subsection{Geometric data}\label{geometricdatum}
Recall the enumerative problem of Section ~\ref{ceremonie} corresponding to a general point $\me\in\Fl_S(\mw)$ with $\mw=\mathcal{O}(1)\oplus\mathcal{O}(-1)\oplus\mathcal{O}^{\oplus N-2}$.
Let $\cone=\rk\vr$, $\ctwo=\rk\vl$, so $\cone+\ctwo=\rk\clar$. Solutions to the enumerative problem in $A\subseteq \quot$  produce vector bundles on $\pone$ equipped with complete (induced) flags
$(\mv_1,\mf_1),\dots,(\mv_{\cone},\mf_{\cone})$,  where $\mf_i\in\Fl_S(\mv_i)$. We  obtain  quotients equipped with complete (induced) flags  $(\mq_1,\mgg_1),\dots,(\mq_{\cone},\mf_{\cone})$ where $\mq_i=\mw/\mv_i$ and $\mgg_i\in\Fl_S(\mq_i)$. Note that the bundles $\mq_i$ are non-trivial since they have $\mathcal{O}(-1)$ as a direct summand (see Section \ref{proofofceremonie}). 

Similarly, solutions to the enumerative problem in  $B\subseteq \quot$ produce  bundles and quotients (each equipped with flags at points of $S$): $(\mv'_1,\mf') \dots,(\mv'_{1},\mf'_{\ctwo})$ and
$(\mq'_1,\mgg_1),\dots,(\mq'_{\ctwo},\mgg'_{\ctwo})$ (again $\mv'_a\subseteq \mw$ and $\mq'_a=\mw/\mv'_a$). The bundles $\mv_a$ are non-trivial since they have $\mathcal{O}(1)$ as a subsheaf (see Section \ref{proofofceremonie}).

Let $(\mt_i,\widetilde{\mgg}_i)$ (resp. $(\mt'_a,\widetilde{\mgg}_a$)) be the duals of $(\mq_i,{\mgg}_i)$ (and resp. $(\mq_a,{\mgg}_a)$), $i\in[\cone]$ (resp. $a\in[\ctwo]$). The bundles $\mv_i,\mt_i,\mv'_a,\mt'_a, i\in[\cone],a\in[\ctwo]$ are of degree zero. We choose and fix trivializations of their determinants.  We therefore obtain
\begin{enumerate}
\item Points $x_1,\dots, x_{\cone};\ x'_1,\dots,x'_{\ctwo}\in\mm$.  Here, $x_i$ (resp. $x'_a$) are the points $(\mv_i,\mf_i)$ (resp. $(\mv'_a,\mf'_a)$).
\item Points $y_1,\dots, y_{\cone};\ y'_1,\dots,y'_{\ctwo}\in\mathcal{N}$.  Here, $y_i$ (resp. $y'_a$) are the points $(\mt_i,\widetilde{\mgg}_i)$ (resp. $(\mt'_a,\widetilde{\mgg}'_a)$).
\item $s(x_i,y'_a)=0$ and $s(x'_a,y_i)=0$ for all $i\in[\cone]$ and $a\in[\ctwo]$.  This is because there are natural non-zero maps $\mv_i\to \mq'_a$ ``inclusion in $\mw$ followed by projection" (resp. $\mv'_a\to\mq_j$) which satisfy the conditions of ~\eqref{defD2} (with the corresponding flags $\mf_i$ and $\mgg'_a$ (resp. $\mf'_a$ and $\mgg_i$)).
\item $s(x'_a,y'_b)\neq 0$ iff $a=b$, and $s(x_i,y_j)\neq 0$ iff $i=j$, $i,j\in[\cone]$ and  $a,b\in[\ctwo]$. This is a consequence of transversality  in the enumerative problem (as in remarks \ref{transversality} and Lemma \ref{transverseo}, (b)).
\item By Lemma \ref{books},  $\delta_i=s(.,y_i)$ and $\gamma_a= s(., y'_a)$, $i\in[\cone]$, $a\in[\ctwo]$,  form a basis for $H^0(\mm,\ma)$.
\end{enumerate}
The desired vanishing of $\vr^*\to \vl$ 
 follows from
\begin{proposition} The composition of the following maps is zero:
$$H^0(\mm,  \ma_{\ell})\to H^0(\mm, \ma)\to H^0(\mathcal{N}, \mb)^*\to H^0(\mathcal{N}, \mb_r)^*.$$
\end{proposition}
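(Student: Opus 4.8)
The plan is to run a short diagram chase that combines the explicit basis of $H^0(\mm,\ma)$ coming from Lemma \ref{books} and the geometric data of Section \ref{geometricdatum} with the two ``non-trivial underlying bundle forces vanishing'' statements, Remarks \ref{vanishing1} and \ref{vanishing2}. Recall from Section \ref{geometricdatum} the basis $\delta_1,\dots,\delta_{\cone},\gamma_1,\dots,\gamma_{\ctwo}$ of $H^0(\mm,\ma)$, with $\delta_i=s(\,\cdot\,,y_i)$ and $\gamma_a=s(\,\cdot\,,y'_a)$, where $y_1,\dots,y_{\cone}$ and $y'_1,\dots,y'_{\ctwo}$ are the points of $\mnn$ attached to the solutions of the degenerate enumerative problem of Section \ref{ceremonie} lying, respectively, in the open parts $A\subseteq\quot_1$ and $B\subseteq\quot_2$; the corresponding points of $\mm$ are $x_1,\dots,x_{\cone}$ and $x'_1,\dots,x'_{\ctwo}$, and $s$ vanishes on the mixed pairs $(x_i,y'_a)$, $(x'_a,y_i)$ while $s(x_i,y_j)\neq 0\iff i=j$ and $s(x'_a,y'_b)\neq 0\iff a=b$.

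First I would pin down the image of the multiplication-by-$\theta$ map $H^0(\mm,\ma_{\ell})\to H^0(\mm,\ma)$ in terms of this basis. By Remark \ref{vanishing1}, every section in that image vanishes at each point of $\mm$ with non-trivial underlying bundle, and in particular at every $x'_a=(\mv'_a,\mf'_a)$, $a\in[\ctwo]$, because $\mv'_a$ has $\mathcal{O}(1)$ as a subsheaf and so is not isomorphic to $\mathcal{O}^{\oplus r+1}$ (Sections \ref{proofofceremonie} and \ref{geometricdatum}). Writing such a section as $\sum_{i\in[\cone]}c_i\delta_i+\sum_{a\in[\ctwo]}d_a\gamma_a$ and evaluating at $x'_b$, the relations $\delta_i(x'_b)=s(x'_b,y_i)=0$ and $\gamma_a(x'_b)=s(x'_b,y'_a)$, nonzero precisely when $a=b$, force every $d_b$ to vanish. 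Thus the image of $H^0(\mm,\ma_{\ell})\to H^0(\mm,\ma)$ lies in $\Spann\{\delta_1,\dots,\delta_{\cone}\}$.

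Next I would transport the $\delta_i$ through the last two maps. By the second assertion of Lemma \ref{books}, applied with the roles of $\mm$ and $\mnn$ interchanged, the duality isomorphism $H^0(\mm,\ma)\to H^0(\mnn,\mb)^*$ carries $\delta_i=s(\,\cdot\,,y_i)$ to the functional ``evaluation at $y_i$'', up to a nonzero scalar. Since $y_i=(\mt_i,\widetilde{\mgg}_i)$ with $\mt_i=\mq_i^*$ and $\mq_i=\mw/\mv_i$ having $\mathcal{O}(-1)$ as a direct summand, the underlying bundle of $y_i$ is non-trivial, so by Remark \ref{vanishing2} the image of $\eva_{y_i}$ in $H^0(\mnn,\mb_{r})^*$ is zero. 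Combining with the first step, the composition annihilates $\Spann\{\delta_1,\dots,\delta_{\cone}\}$, which contains the image of $H^0(\mm,\ma_{\ell})$; hence the composition is zero. Via \eqref{verlinde} and the reductions of Section \ref{sketch}, this is exactly the sought vanishing of $\vr^*\to\vl$ in \eqref{compo}.

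The genuine content lies not in this chase but in the inputs it relies on, all already established: the splitting $\quot=\quot_1\cup\quot_2$ and the distribution of the enumerative solutions into the smooth open parts $A$ and $B$ (Lemma \ref{ceremonienew}, Sections \ref{ceremony1} and \ref{ceremony2}); that solutions in $A$ have trivial underlying subbundle but non-trivial quotient, and dually for $B$ (Section \ref{proofofceremonie}); and that the points so produced pair under $s$ in exactly the block pattern needed to invoke Lemma \ref{books} (Section \ref{geometricdatum}, via the transversality statements cited there). The one place demanding care is the bookkeeping that matches ``subbundle trivial, quotient non-trivial'' on the $A$-side with the hypothesis of Remark \ref{vanishing1} on $\mm$ and, after dualizing the quotients, with that of Remark \ref{vanishing2} on $\mnn$ — i.e.\ the non-triviality of the $\mv'_a$ and of the $\mt_i=\mq_i^*$ used above — and I expect this matching to be the only real obstacle.
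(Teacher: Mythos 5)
Your proof is correct and is essentially the paper's own argument: express the image of a section of $\ma_{\ell}$ in the basis $\{\delta_i,\gamma_a\}$, kill the $\gamma_a$-coefficients by evaluating at the $x'_a$ via Remark \ref{vanishing1} and the pairing pattern of Section \ref{geometricdatum}, then send the $\delta_i$ to $\eva_{y_i}$ by Lemma \ref{books} and conclude with Remark \ref{vanishing2}. The only difference is that you spell out the evaluation-at-$x'_b$ computation that the paper compresses into ``by (iii) and (iv).''
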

\begin{proof}
The image of a section $\tau\in H^0(\mm,\ma_{\ell})$ in $H^0(\mm,\ma)$ can be expressed as a linear combination of $\delta_i$'s and $\gamma_a$'s. Now evaluate such an
expression at the points $x'_a$. By Remark ~\ref{vanishing1}, $\tau(x'_a)=0$ because the vector bundle $\mv_a$ underlying $x'_a$ is non-trivial. Thus $\tau$ is a linear combination of the sections $\delta_i$ by (iii) and (iv). By Lemma ~\ref{books}, the duality map
$H^0(\mm, \ma)\to H^0(\mathcal{N}, \mb)^*$  sends $\delta_i$ to $\eva_{y_i}$. Remark ~\ref{vanishing2} gives that the image of $\eva_{y_i}$ in $H^0(\mathcal{N}, \mb_r)^*$
is zero (because the  vector bundle $\mq_i^*$ underlying $y_i$ is non-trivial). Hence the image of $\tau$ in $H^0(\mathcal{N}, \mb_r)^*$ is zero as desired.
\end{proof}
\begin{remark}\label{finale}
The points $x'_a$ in $\mm$ have non-trivial underlying bundles, and are yet parabolic-semistable for the linearization
$\mathcal{P}( \sL_{r+1},{\ell+1},\vec{\lambda})$. This is because of the existence of parabolic bundles $y'_a$ and the non-existence of non-zero maps of parabolic bundles from $x'_a$ to $y'_a$ (using (4) above and a parabolic generalization of a  method of Faltings, see \cite[A.1]{Belkale1}).
\end{remark}
\begin{remark}\label{comparison_new}
 \begin{itemize}
 \item  The proof of the vanishing statement, Proposition \ref{newthing} even for $\sL_2$ (noted in
 \cite{Fakh}) uses some representation theory of $\sL_2$, and does not use explicit formulas \cite{Sorger}. We are not able to even see, using formulas, that for $\ell$ large enough and fixed $\vec{\lambda}$,  $c_1(\vr)=0$.  
  \item Formula (a) in Proposition \ref{strange} also does not seem to follow from explicit rank formulas \cite{Sorger} (even $n=3$). For example, the ``denominators'' in the explicit formulas are $(r+1+\ell)$ on one side and $(r+1+\ell+1)$ on the other (in strange duality settings where such equalities follow from
 formulas, the denominators are the same).
 \item Assuming Proposition \ref{newthing}, and (a) in Proposition \ref{strange}, one could ask if (b) follows from Fakhruddin's Chern class formulas. 
  The factorization data required (in the formulas of \cite{Fakh}) for $\vr$ and $\vl$ are not known to be related, even for $n=4$ (in particular, one cannot assume that the factorization data is again at critical level), so it is not clear how to proceed with a formulaic approach.
  \end{itemize}
\end{remark}
\section{Compactifications and Gromov-Witten numbers}\label{bertram} In Section \ref{outline}, we consider intersections in  the entire  Quot scheme instead of an open subset as done in Gromov-Witten theory. The goal of this section is to show that compactifying parameter spaces does not change the enumerative numbers. The main result of this section is Proposition \ref{bertie}, a generalization of a result of Bertram  \cite{Bertram}.

 Let $\mw$ be a  vector bundle on $\pone$ of rank $N$ and degree $-D$. Let $d$ and $\tr$ be integers with $N>\tr> 0$ (and $d,\ D$  possibly
negative). Define {$\mfm(d,\tr,\mw)$} to be the moduli space of
sub bundles of  $\mw$ of degree $-d$ and rank $\tr$. This is an open
subset of the quot scheme $\quot(d,\tr,\mw)$ of quotients of $\mw$ of degree $d-D$ and
rank $N-\tr$.
\begin{proposition}\label{bigoneprop}
Suppose $\mw$ is evenly-split
\begin{enumerate}
\item $\quot(d,\tr,\mw)$  a smooth projective variety (possibly empty) of dimension
$\tr(N-\tr)+dN- D\tr$.
\item  $\mfm(d,\tr,\mw)\subseteq\quot(d,\tr,\mw)$ is dense and connected.
\item The subset  $\mfm^o(d,\tr,\mw)$ of
$\mfm(d,\tr,\mw)$ consisting of
 evenly split sub bundles $\mv\subseteq\mw$ such that $\mw/\mv$ is also
 evenly split, is open and dense in $\mfm(d,\tr,\mw)$.
 \end{enumerate}
\end{proposition}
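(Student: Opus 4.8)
The plan is to analyze $\quot(d,\tr,\mw)$ through the deformation theory of Quot schemes. At the point corresponding to a quotient $\mw\twoheadrightarrow\mq$ with kernel $\ms$, the subsheaf $\ms$ is automatically locally free of rank $\tr$ and degree $-d$ (a subsheaf of a bundle on a smooth curve is a bundle); the Zariski tangent space is $\Hom(\ms,\mq)$ and the obstruction space is $\Ext^1_{\pone}(\ms,\mq)$. Everything in part (1) follows once one proves
\[ \Ext^1_{\pone}(\ms,\mq)=0 \quad\text{at every point of } \quot(d,\tr,\mw): \]
then $\quot(d,\tr,\mw)$ is smooth, it is projective by Grothendieck's construction of the Quot scheme (the Hilbert polynomial on $\pone$ is fixed by the numerical data), and its dimension at each point equals $\hom(\ms,\mq)=\chi_{\pone}(\ms^{\vee}\otimes\mq)$, which Riemann--Roch on $\pone$ evaluates to $\tr(N-\tr)+dN-D\tr$ uniformly; in particular $\quot(d,\tr,\mw)$ is pure of that dimension, since its tangent space has constant dimension.

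The vanishing is where the hypothesis that $\mw$ is evenly split enters, and it is the crux. Writing $\overline{\mq}=\mq/\mathrm{tors}(\mq)$, the sequence $0\to\mathrm{tors}(\mq)\to\mq\to\overline{\mq}\to 0$ gives $\Ext^1(\ms,\mathrm{tors}(\mq))=H^1(\ms^\vee\otimes\mathrm{tors}(\mq))=0$ (a torsion sheaf on a curve has vanishing $H^1$), so $\Ext^1(\ms,\mq)$ injects into $H^1(\pone,\ms^\vee\otimes\overline{\mq})$. By Serre duality on $\pone$ this group is dual to $\Hom(\overline{\mq}(2),\ms)$, which injects, via $\ms\hookrightarrow\mw$, into $\Hom(\overline{\mq}(2),\mw)$. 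Dualizing $\mw\twoheadrightarrow\overline{\mq}$ exhibits $\overline{\mq}^\vee$ as a subbundle of $\mw^\vee$, so every line-bundle summand $\mathcal{O}(b)$ of $\overline{\mq}$ satisfies $b\ge\min_i a_i$, where $\mw=\bigoplus_i\mathcal{O}(a_i)$; since $\mw$ is evenly split, $\max_i a_i\le\min_i a_i+1$, hence $a_i-b-2\le -1$ for all relevant indices and $\Hom(\overline{\mq}(2),\mw)=\bigoplus H^0(\mathcal{O}(a_i-b-2))=0$. Therefore $\Ext^1(\ms,\mq)=0$. The same estimate shows $H^1(\mv^\vee\otimes\mw)=0$ for every subbundle $\mv\subseteq\mw$, a fact used below.

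For part (2), $\mfm(d,\tr,\mw)$ is the open locus of $\quot(d,\tr,\mw)$ where $\mq$ is locally free. To prove density, stratify the complement by the length $t\ge 1$ of $\mathrm{tors}(\mq)$: a point of the stratum $Z_t$ is equivalent to the data of the saturation $\ms'\subseteq\mw$, a subbundle of rank $\tr$ and degree $-d+t$, together with a colength-$t$ subsheaf $\ms\subseteq\ms'$, i.e.\ a point of the length-$t$ punctual Quot scheme of the rank-$\tr$ bundle $\ms'$, which has dimension $\tr t$. Hence $\dim Z_t\le\big(\tr(N-\tr)+(d-t)N-D\tr\big)+\tr t=\delta-t(N-\tr)<\delta$ (as $N>\tr$), where $\delta$ is the dimension from (1); since $\quot(d,\tr,\mw)$ is smooth of pure dimension $\delta$, $\mfm(d,\tr,\mw)$ is open and dense. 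For connectedness it suffices to show $\mfm(d,\tr,\mw)$ is irreducible, and then note $\quot(d,\tr,\mw)$ is connected because $\mfm(d,\tr,\mw)$ is dense in it. Stratify $\mfm(d,\tr,\mw)$ by the splitting type $\tau$ of $\mv$: the stratum $\mfm_\tau=\{[\mv\subseteq\mw]:\mv\cong\mv_\tau\}$ is the quotient of the open subset $\mathrm{Inj}(\mv_\tau,\mw)\subseteq\Hom(\mv_\tau,\mw)$ of subbundle inclusions by the connected group $\mathrm{Aut}(\mv_\tau)$, hence irreducible, and a short count using $\Ext^1(\mv_\tau,\mw/\mv_\tau)=0$ and $H^1(\mv_\tau^\vee\otimes\mw)=0$ gives $\dim\mfm_\tau=\delta-h^1(\mv_\tau^\vee\otimes\mv_\tau)$. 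There are finitely many strata, and $h^1(\mv_\tau^\vee\otimes\mv_\tau)=0$ exactly when $\mv_\tau$ is evenly split, which pins down a unique splitting type $\tau_0$; since $\mfm(d,\tr,\mw)$ is pure of dimension $\delta$, the generic point of each irreducible component lies in the $\delta$-dimensional stratum $\mfm_{\tau_0}$, so there is a single component and $\mfm(d,\tr,\mw)$ is irreducible.

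Part (3) is then immediate: $\mfm^o(d,\tr,\mw)$ is open in $\mfm(d,\tr,\mw)$, being the intersection of the open dense stratum $\mfm_{\tau_0}$ (the locus where $\mv$ is evenly split) with the locus where $\mw/\mv$ is evenly split, and the latter is also open and dense by applying the same stratification argument to $\mw^\vee$ under the Grassmann-duality identification $\mv\leftrightarrow(\mw/\mv)^\vee$ (Section \ref{grdual}); an intersection of two open dense subsets of the irreducible $\mfm(d,\tr,\mw)$ is open and dense, giving (3). The step I expect to be the main obstacle is the $\Ext^1$-vanishing of the second paragraph: it has to be verified at all points of $\quot(d,\tr,\mw)$, including highly non-saturated ones, and it is what drives smoothness, the dimension formula, and the numerical input $H^1(\mv^\vee\otimes\mw)=0$ to the stratification arguments; by contrast the dimension counts in parts (2) and (3) are routine once that vanishing is in hand.
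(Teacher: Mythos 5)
Your argument is correct, and it is essentially the standard proof: the paper itself gives no argument here, deferring to Section 12.1 of \cite{b4}, which runs along the same lines (unobstructedness of the Quot scheme via the $\Ext^1$-vanishing forced by the evenly split hypothesis, followed by dimension counts over the torsion-length and splitting-type strata). Your Serre-duality computation $\Ext^1(\ms,\mq)\hookrightarrow\Hom(\overline{\mq}(2),\ms)^{*}=0$ is the right crux and is carried out correctly. The only point worth a word more is the openness (as opposed to mere constructibility) of the evenly split loci in part (3): this follows from upper semicontinuity of $h^0(\mv(m))$ (resp.\ $h^0((\mw/\mv)(m))$) applied to the universal sub- and quotient bundles, since the evenly split type simultaneously minimizes all these numbers among bundles of fixed rank and degree.
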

See  Section 12.1 of ~\cite{b4} for a proof of the above (certainly well known).

Consider a $5$-tuple $(d,\tr,D,N,I)$ where $d,D,\tr,N$ are as above and $I$  assigns to each
$p\in S$ an element $I^p\in \binom{[N]}{\tr}$. We will use the notation
$I^{p}=\{i^p_1<\dots<i^p_{\tr}\}$ for $p\in \mpp$,
also introduce Young diagrams $\lambda_p$
by the formula $\lambda_{p}^{(a)}=N-\tr+a-i^p_a$ for $a=1,\dots,\tr.$
Assume the numerical condition (which leads to enumerative problems with finitely many solutions)
\begin{equation}\label{dimii}
\tr(N-\tr) -D\tr +dN=\sum_{p\in S}|\lambda_p|.
\end{equation}

Let $\mw$ be a vector bundle of degree $-D$ and rank $N$. Let $\me\in\Fl_S(\mw)$. We fix the data
of $\me$ and $\mw$ in this section.

Form the natural morphism
$$\pi:\mfm(d,\tr,\mw)\to \prod_{i=1}^n \Gr(\tr,\mw_{p_i})$$
and consider
 \begin{equation}\label{defomega}
 \Omega=\pi^{-1}(\prod_{i=1}^n \Omega_{I^{p_i}}(E^{p_i}_{\bull})).
 \end{equation}
 
 Since $\prod_{i=1}^n \Gr(\tr,\mw_{p_i})$ is a homogeneous space, Kleiman's transversality implies:
 \begin{lemma}\label{generalposition}
 Suppose $\mw$ is evenly split and  $\me\in\Fl_S(\mw)$ is general, then,
 \begin{enumerate}
 \item[(a)]$\Omega$ is a finite set.
 \item[(b)] $\Omega$ coincides with the intersection of inverse images of open Schubert cells $\pi^{-1}(\prod_{i=1}^n \Omega^o_{I^{p_i}}(E^{p_i}_{\bull}))$, and is therefore a transverse intersection.
 \item[(c)] $\Omega\subseteq \mfm^o(d,\tr,\mw)$ ($\mfm^o(d,\tr,\mw)$ was defined in Proposition \ref{bigoneprop}).
 \end{enumerate}
\end{lemma}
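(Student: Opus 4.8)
The plan is to obtain all three statements from a single invocation of Kleiman's transversality theorem \cite{Kleiman}, in its ``pull\nobreakdash-back'' form (as used in Section~2.4 of \cite{b4}), applied to the morphism $\pi:\mfm(d,\tr,\mw)\to Y$ with $Y:=\prod_{i=1}^n\Gr(\tr,\mw_{p_i})$. The point is that $Y$ is homogeneous under $G:=\prod_{i=1}^n\GL(\mw_{p_i})$, which also acts transitively on $\Fl_S(\mw)=\prod_{i=1}^n\Fl(\mw_{p_i})$; so a general $\me\in\Fl_S(\mw)$ is $g\cdot\me_0$ for a fixed reference $\me_0$ and $g$ in a dense open subset of $G$, and by equivariance $\Omega=\pi^{-1}(g\cdot\Sigma_0)$, where $\Sigma_0:=\prod_{i=1}^n\Omega_{I^{p_i}}(E^{p_i,0}_{\bull})$ (and likewise with open cells). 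Since $\mw$ is evenly split, Proposition~\ref{bigoneprop} tells us $\mfm(d,\tr,\mw)$ is a smooth irreducible variety of dimension $\tr(N-\tr)+dN-D\tr$, while $\codim_Y\Sigma_0=\sum_{p\in S}|\lambda_p|$. Hence, for generic $g$, $\pi^{-1}(g\cdot\Sigma_0)$ is either empty or equidimensional of dimension $\dim\mfm(d,\tr,\mw)-\codim_Y\Sigma_0$, which is $0$ by the numerical condition \eqref{dimii}. This gives part (a).

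For part (b), apply the same theorem to the finitely many irreducible components $Z$ of the boundary $\Sigma_0\setminus\Sigma_0^o$, where $\Sigma_0^o:=\prod_i\Omega^o_{I^{p_i}}(E^{p_i,0}_{\bull})$. Each such $Z$ satisfies $\codim_Y Z>\sum_{p\in S}|\lambda_p|$, so the expected dimension of $\pi^{-1}(g\cdot Z)$ is negative and therefore $\pi^{-1}(g\cdot Z)=\emptyset$ for generic $g$. Consequently $\Omega=\pi^{-1}(g\cdot\Sigma_0^o)$. Now $\Sigma_0^o$ is a product of open Schubert cells, hence smooth, and $\mfm(d,\tr,\mw)$ is smooth, so the transversality part of Kleiman's theorem (we work in characteristic zero) yields that $\pi^{-1}(g\cdot\Sigma_0^o)$ is smooth, i.e.\ a reduced, finite, transverse intersection.

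For part (c), use that $\mfm(d,\tr,\mw)$ is irreducible and $\mfm^o(d,\tr,\mw)$ is dense open in it (Proposition~\ref{bigoneprop}), so the complement $\mfm(d,\tr,\mw)\setminus\mfm^o(d,\tr,\mw)$ is a proper closed subset, of dimension $\le\dim\mfm(d,\tr,\mw)-1$; running Kleiman on each of its irreducible components once more forces $\pi^{-1}(g\cdot\Sigma_0)$ to avoid it for generic $g$, whence $\Omega\subseteq\mfm^o(d,\tr,\mw)$. Finally, all the conditions imposed on $g$ are dense open and finite in number, so a single general $\me$ realizes (a), (b), and (c) at once. I do not expect a genuine obstacle here: the only points requiring care are the dimension bookkeeping --- which collapses to \eqref{dimii} together with the codimension formula $\codim_Y\Omega_{I^p}=|\lambda_p|$ --- and the standard identification of a general parabolic structure with a generic $G$\nobreakdash-translate of a fixed one, both of which are documented in \cite{b4}.
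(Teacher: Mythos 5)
Your proposal is correct and is exactly the argument the paper intends: the paper's entire justification for Lemma \ref{generalposition} is the one-line remark that $\prod_{i=1}^n \Gr(\tr,\mw_{p_i})$ is homogeneous, so Kleiman's transversality applies, and you have simply filled in the routine bookkeeping (identifying a general $\me\in\Fl_S(\mw)$ with a general $G$-translate, using the dimension formula of Proposition \ref{bigoneprop} together with \eqref{dimii} for (a), the strict codimension drop on the Schubert boundary for (b), and the properness of $\mfm(d,\tr,\mw)\setminus\mfm^o(d,\tr,\mw)$ for (c)). No gaps; this matches the paper's approach.
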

 
 If $\mw$ is not general, we have a weaker result by replacing $\mfm(d,\tr,\mw)$  by its smooth open (possibly empty) subset $\mfm^{sm}(d,\tr,\mw)$
 of points where the tangent space is of the expected dimension (i.e., $\mv\subseteq \mw$ such that  $\operatorname{Ext}^1(\mv,\mw/\mv)=0$). The following lemma was used in Section \ref{geometricdatum}.
 \begin{lemma}\label{transverseo}
 Suppose $\me\in\Fl_S(\mw)$ is general. 
 \begin{enumerate}
 \item[(a)] The intersection $\Omega\cap \mfm^{sm}(d,\tr,\mw)$ is a smooth and transverse intersection in $\mfm^{sm}(d,\tr,\mw)$ which coincides with $\mfm^{sm}(d,\tr,\mw)\cap \pi^{-1}(\prod_{i=1}^n \Omega^o_{I^{p_i}}(E^{p_i}_{\bull}))$.
 \item[(b)] 
Suppose $\mv\in \Omega\cap \mfm^{sm}(d,\tr,\mw)$. Let $\mq=\mw/\mv$, and $\mf\in\Fl_S(\mv)$, $\mgg\in \Fl_S(\mq)$ the induced flags. 
 There are no non-zero maps $\phi:\mv\to \mq$ such that for $a=1,\dots,r+1$ and $i=1,\dots, n$, the inclusion \eqref{defD2} holds. 
 \end{enumerate}
 \end{lemma}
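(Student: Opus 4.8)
The plan is to derive both parts from Kleiman's transversality theorem applied to the evaluation morphism $\pi\colon \mfm^{sm}(d,\tr,\mw)\to Y:=\prod_{i=1}^n\Gr(\tr,\mw_{p_i})$. Its source is smooth: by deformation theory of the Quot scheme, at a subbundle $\mv\subseteq\mw$ with $\Ext^1(\mv,\mw/\mv)=0$ the Quot scheme is smooth of dimension $\hom(\mv,\mw/\mv)=\tr(N-\tr)+dN-D\tr$. Its target is homogeneous under $G:=\prod_{i=1}^n\GL(\mw_{p_i})$, and since $G$ acts transitively on $\Fls(\mw)$, taking $\me\in\Fls(\mw)$ general is the same as translating a fixed flag $n$-tuple by a general $g\in G$.

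For part (a), I would first stratify each Schubert variety into its open cells, $\Omega_{I^{p_i}}(E^{p_i}_{\bull})=\bigsqcup_{\mu_i}\Omega^o_{\mu_i}(E^{p_i}_{\bull})$, the union over the finitely many Young diagrams $\mu_i$ with $\mu_i^{(a)}\geq\lambda_{p_i}^{(a)}$ for all $a$ (so $|\mu_i|\geq|\lambda_{p_i}|$, with equality only if $\mu_i=\lambda_{p_i}$); each cell is smooth of codimension $|\mu_i|$. Applying Kleiman to $\pi$ and to each product of cells, a single common general translate makes every fibre product $\mfm^{sm}(d,\tr,\mw)\times_Y\prod_i\Omega^o_{\mu_i}(E^{p_i}_{\bull})$ empty or smooth (hence reduced) of the expected dimension $\dim\mfm^{sm}(d,\tr,\mw)-\sum_i|\mu_i|$. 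By the numerical condition \eqref{dimii}, $\dim\mfm^{sm}(d,\tr,\mw)=\sum_{p\in S}|\lambda_p|$, so this dimension equals $\sum_i(|\lambda_{p_i}|-|\mu_i|)\leq 0$, vanishing exactly when $\mu_i=\lambda_{p_i}$ for all $i$. Hence every stratum with some $\mu_i\neq\lambda_{p_i}$ is empty, and $\Omega\cap\mfm^{sm}(d,\tr,\mw)$ collapses to $\mfm^{sm}(d,\tr,\mw)\cap\pi^{-1}\bigl(\prod_i\Omega^o_{I^{p_i}}(E^{p_i}_{\bull})\bigr)$, a reduced, transverse, zero-dimensional intersection, which is the content of (a). I expect the only delicate points here to be bookkeeping: verifying the dimension formula for $\mfm^{sm}$ against \eqref{dimii}, and noting that Kleiman still applies with the merely locally closed (but smooth) strata $\prod_i\Omega^o_{\mu_i}$ because the source is smooth.

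For part (b), fix $\mv\in\Omega\cap\mfm^{sm}(d,\tr,\mw)$; by (a) one has $\mv_{p_i}\in\Omega^o_{\lambda_i}(E^{p_i}_{\bull})$ for every $i$. Suppose, for contradiction, there is a nonzero $\phi\colon\mv\to\mq$ with $\phi(F^{p_i}_a)\subseteq G^{p_i}_{\ell+1-\lambda_i^{(a)}}$ for all $a\in[r+1]$, $i\in[n]$. Since $\mv\in\mfm^{sm}$, we have $T_{\mv}\mfm^{sm}(d,\tr,\mw)=\home_{\pone}(\mv,\mq)$, and $d\pi$ at $\mv$ is the fibrewise evaluation $\home_{\pone}(\mv,\mq)\to\bigoplus_i\home(\mv_{p_i},\mq_{p_i})=T_{\pi(\mv)}Y$, $\psi\mapsto(\psi_{p_i})_i$. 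Reading \eqref{added2} with the induced flags $\mf$ on $\mv$ and $\mgg$ on $\mq$ and with $k=N-\tr=\ell+1$, the conditions on $\phi$ say exactly that $\phi_{p_i}\in T_{\mv_{p_i}}\Omega^o_{\lambda_i}(E^{p_i}_{\bull})$ for each $i$, i.e. $d\pi(\phi)\in T_{\pi(\mv)}\bigl(\prod_i\Omega^o_{I^{p_i}}(E^{p_i}_{\bull})\bigr)$. Then the pair $\bigl(\phi,d\pi(\phi)\bigr)$ lies in the tangent space at $\mv$ of the fibre product $\mfm^{sm}(d,\tr,\mw)\times_Y\prod_i\Omega^o_{I^{p_i}}(E^{p_i}_{\bull})$, which by (a) is smooth of dimension zero and so has vanishing tangent space; hence $\phi=0$, a contradiction. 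This is the relative analogue, one dimension up over $\pone$, of the transversality argument in Remark \ref{transversality}; once (a) is established it is essentially formal, so I regard part (a) as the main obstacle.
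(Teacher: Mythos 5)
Your proposal is correct and follows essentially the same route as the paper: part (a) is Kleiman's transversality applied to $\pi$ restricted to the smooth locus $\mfm^{sm}(d,\tr,\mw)$ (with the cell stratification and dimension count via \eqref{dimii} that the paper leaves implicit, as in Lemma \ref{generalposition}), and part (b) is the same tangent-space argument identifying $T_{\mv}\mfm^{sm}(d,\tr,\mw)=\operatorname{Hom}(\mv,\mq)$ and the conditions \eqref{defD2} with the tangent spaces \eqref{added2} of the Schubert cells, so that transversality and zero-dimensionality force $\phi=0$.
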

 \begin{proof}
 Part (a) follows from Kleiman's transversality by restricting the domain of $\pi$ to the smooth open subset $\mfm^{sm}(d,\tr,\mw)$.
 For (b), note that  the tangent space to $\mfm(d,\tr,\mw)$ at $\mv$ is $\operatorname{Hom}(\mv,\mq)$, and the tangent spaces to Schubert cells are as described in Section \ref{added2}. Now, (b) follows from the transversality  and zero dimensionality of the intersection $\Omega$ at $\mv$.
\end{proof}

 It is important to work with compact parameter spaces while degenerating enumerative problems.  The set $\mfm(d,\tr,\mw)$
is not compact, but $\quot(d,\tr,\mw)$ is  a projective variety which contains $\mfm(d,\tr,\mw)$ as an open subset. We therefore consider the following modified enumerative problem:  
\begin{itemize}
\item Let $\Omega'\subseteq \quot(d,\tr,\mw)$ be the set of subsheaves $\mv\subseteq \mw$ of degree $-d$ and rank $\tr$ 
such that for all $p\in S,a\in[\tr]$, the map $\mv_p\to \mw_p/E_{i^{p}_{a}}$ has kernel of dimension at least $a$.  
\end{itemize}
The following is due to Bertram (Lemma 2.2A in \cite{Bertram}), when $D=0$. We give an obvious extension of Bertram's proof for arbitrary $D$ in Section \ref{bertieproof}.
\begin{proposition}\label{bertie}\cite{Bertram}
Suppose that $\mw$ is evenly split and $\me\in\Fl_S(\mw)$ is general. Then, 
$\Omega=\Omega'$.
\end{proposition}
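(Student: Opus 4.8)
The plan is as follows. The inclusion $\Omega\subseteq\Omega'$ is immediate: for a subbundle $\mv\subseteq\mw$ one has $\mv_{p}\hookrightarrow\mw_{p}$ for every $p\in S$, and then ``$\mv_{p}\to\mw_{p}/E^{p}_{i^{p}_{a}}$ has kernel of dimension $\ge a$'' is literally the condition $\dim(\mv_{p}\cap E^{p}_{i^{p}_{a}})\ge a$ defining $\Omega_{I^{p}}(E^{p}_{\bull})$. Since the two systems of conditions coincide on the subbundle locus $\mfm(d,\tr,\mw)\subseteq\quot(d,\tr,\mw)$, the whole statement reduces to showing that \emph{every} $\mv\in\Omega'$ is saturated in $\mw$, hence a subbundle, hence a point of $\Omega$. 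This is the point at which genericity of $\me$ enters.

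Suppose then that $\mv\in\Omega'$ is not saturated, and let $\tmv\supsetneq\mv$ be its saturation: a rank $\tr$ subbundle of $\mw$ with $\deg\tmv=-d+t$ for the integer $t\ge 1$ equal to the length of $\tmv/\mv$. Two elementary observations drive the argument. First, writing $k_{p}:=\dim\ker(\mv_{p}\to\mw_{p})$ for $p\in S$, one has $\sum_{p\in S}k_{p}\le t$, because $\ker(\mv_{p}\to\mw_{p})=\ker(\mv_{p}\to\tmv_{p})$ is a quotient of $\operatorname{Tor}_{1}(\tmv/\mv,k(p))$, whose dimension is the number of local generators of $\tmv/\mv$ at $p$ and hence at most its local length. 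Second, since $\operatorname{im}(\mv_{p}\to\mw_{p})\subseteq\tmv_{p}$ and $\ker(\mv_{p}\to\mw_{p}/E^{p}_{i^{p}_{a}})$ surjects onto $\operatorname{im}(\mv_{p})\cap E^{p}_{i^{p}_{a}}$ with kernel of dimension $k_{p}$, the hypothesis $\mv\in\Omega'$ forces
\[ \dim\bigl(\tmv_{p}\cap E^{p}_{i^{p}_{a}}\bigr)\ \ge\ \dim\bigl(\operatorname{im}(\mv_{p})\cap E^{p}_{i^{p}_{a}}\bigr)\ \ge\ a-k_{p}\qquad(a=1,\dots,\tr,\ \ p\in S). \]
Thus $\tmv_{p}$ lies in the Schubert variety $\Omega_{\lambda'_{p}}(E^{p}_{\bull})\subseteq\Gr(\tr,\mw_{p})$ cut out by these inequalities, and a direct computation with $\lambda_{p}^{(a)}=N-\tr+a-i^{p}_{a}$ (using that every part of $\lambda_{p}$ is at most $N-\tr$) gives $\operatorname{codim}\Omega_{\lambda'_{p}}=|\lambda'_{p}|$ with $|\lambda_{p}|-|\lambda'_{p}|\le k_{p}(N-k_{p})$.

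The dimension count then closes the argument. By Proposition~\ref{bigoneprop}, $\mfm(d-t,\tr,\mw)$ is smooth of dimension $\tr(N-\tr)+(d-t)N-D\tr=\sum_{p\in S}|\lambda_{p}|-tN$, where the last equality is the numerical balance~\eqref{dimii}. The subbundle $\tmv$ lies in $\pi^{-1}\bigl(\prod_{p\in S}\Omega_{\lambda'_{p}}(E^{p}_{\bull})\bigr)$ for the evaluation morphism $\pi\colon\mfm(d-t,\tr,\mw)\to\prod_{p\in S}\Gr(\tr,\mw_{p})$; as the target is a homogeneous space and the source is smooth, Kleiman's theorem gives, for general $\me$,
\[ \dim\pi^{-1}\Bigl(\prod_{p\in S}\Omega_{\lambda'_{p}}(E^{p}_{\bull})\Bigr)\ \le\ \Bigl(\sum_{p\in S}|\lambda_{p}|-tN\Bigr)-\sum_{p\in S}|\lambda'_{p}|\ \le\ (N-1)t-tN\ =\ -t\ <\ 0, \]
using $|\lambda_{p}|-|\lambda'_{p}|\le k_{p}(N-k_{p})\le(N-1)k_{p}$ and $\sum_{p}k_{p}\le t$. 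Hence this preimage is empty for general $\me$, contradicting the existence of $\tmv$. Finally, $\mfm(d-t,\tr,\mw)$ is nonempty only when $t\le\sum_{p}|\lambda_{p}|/N$, so only finitely many pairs $(t,(k_{p})_{p\in S})$ can occur; intersecting the corresponding dense open subsets of $\Fl_{S}(\mw)$ produces a dense open set of flags over which $\Omega=\Omega'$, as required.

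The step I expect to be genuinely delicate is the codimension bookkeeping of the second paragraph: one must check that replacing $\mv$ by its saturation $\tmv$ cannot lower the codimension of the imposed incidence conditions by more than the $tN$ that the degree jump adds back to $\dim\mfm$. The chain $|\lambda_{p}|-|\lambda'_{p}|\le k_{p}(N-k_{p})\le(N-1)k_{p}$, together with $\sum_{p}k_{p}\le t$ and $N>\tr\ge 1$, is exactly what makes the final count strictly negative (with room to spare). Everything else — smoothness and dimension of the relevant Quot schemes, and Kleiman transversality for the evaluation maps — is already available from Proposition~\ref{bigoneprop} and Lemma~\ref{generalposition}, and this is precisely the ``obvious extension of Bertram's proof'' to arbitrary $D$ promised above.
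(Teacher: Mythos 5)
Your proof is correct, and it rests on the same mechanism as the paper's — a dimension count on an auxiliary parameter space of locally free objects, closed off by Kleiman transversality applied to the evaluation map into a product of Grassmannians — but the bookkeeping is organized differently. The paper stratifies $\quot(d,\tr,\mw)$ by the function $\epsilon$ recording $\dim\ker(\mv_p\to\mw_p)$ at the points of $S$, computes $\dim\mfm(\epsilon)$ via an elementary-modification bijection (Lemma \ref{zen}), and bounds $\dim(\mfm(\epsilon)\cap\Omega')$ using the weakened Schubert conditions satisfied by the images $\mv_p/K_p$ inside $\Gr(\tr-\epsilon(p),\mw_p)$; the count is negative unless $\epsilon\equiv 0$, and a last use of Kleiman pushes the intersection off the non-subbundle part of the $\epsilon\equiv 0$ stratum. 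You instead send a putative non-saturated solution to its saturation $\tmv\in\mfm(d-t,\tr,\mw)$, transfer the incidence conditions to $\tmv_p\in\Gr(\tr,\mw_p)$ at a cost of $k_p$ per condition, and show the locus of candidate saturations is empty. What this buys is that Lemma \ref{zen} is not needed at all: the dimension of $\mfm(d-t,\tr,\mw)$ is already supplied by Proposition \ref{bigoneprop}. The price is the two extra estimates you supply, namely $\sum_p k_p\le t$ via the $\operatorname{Tor}_1$ computation (in the paper's setup this is an equality by construction, since the modification is performed exactly along the kernels at $S$), and the codimension loss $|\lambda_p|-|\lambda'_p|\le k_p(N-k_p)$ — which, I checked, survives even when some of the transferred conditions $\dim(\tmv_p\cap E^p_{i^p_a})\ge a-k_p$ become vacuous and must be dropped from the rectified index set. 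A further small advantage of your route is that subsheaves failing to be saturated only at points outside $S$ are handled automatically (they still have $t\ge 1$), whereas in the paper they sit in the $\epsilon\equiv 0$ stratum and require the closing remark that $\mfm(d,\tr,\mw)$ is dense open there.
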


\subsection{Proof of Proposition \ref{bertie}}\label{bertieproof}
 We reproduce a variant of Bertram's proof, adapted to our notation. We stratify $\quot(d,\tr,\mw)$ with strata $\mfm(\epsilon)$ parameterized by functions $\epsilon:S\to \Bbb{Z}$ such that $0\leq\epsilon(p)\leq \tr$. We define $\mfm(\epsilon)$ to be the (quot) scheme of subsheaves
$\mv\subseteq \mw$ such that the map $\mv_p\to\mw_p$ has kernel $K_p$ of rank $\epsilon(p),\ p\in S$. Let $\mv$ be a generic point in  $\mfm(\epsilon)\cap\Omega'$. Suppose the subset $\mv_p/K_p$ is a subspace of $\mw_p$ in the Schubert cell of $\operatorname{Gr}(\tr-\epsilon(p),\mw_p)$ corresponding to the $(\tr-\epsilon(p))$-tuple $J^p=\{j^p_1<\dots<j^p_{\tr-\epsilon(p)}\}$. We clearly need
$j^p_{a}+\epsilon(p)\leq i^p_{a +\epsilon(p)},\ a\in[\tr-\epsilon(p)]$.

One therefore gets that the codimension of the Schubert cell corresponding to $J^p$ is at least the codimension of the Schubert cell corresponding to $I^p$ minus
$\sum_{a=1}^{\epsilon(p)} (N-\tr+a-i^p_a)$.  The  dimension of each irreducible component  of  $\mfm(\epsilon)\cap \Omega'$ is therefore (by a simple calculation, using \eqref{dimii} and Kleiman transversality) at most
\begin{equation}\label{intersectione}
\dim\mfm(\epsilon)-\dim\mfm(d,\tr,\mw)  + \sum_{p\in S}\sum_{a=1}^{\epsilon(p)} (N-\tr+a-i^p_a).
\end{equation}

\begin{lemma}\label{zen} Suppose $\mfm(\epsilon)\neq\emptyset$. Then, $\Gr(\epsilon)$ is smooth and connected, and
$$\dim\mfm(\epsilon)= \dim\mfm(d,\tr,\mw)-N\sum_{p\in S}\epsilon(p) + \sum_{p\in S} \epsilon(p)(\tr-\epsilon(p)).$$
\end{lemma}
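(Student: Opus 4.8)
The plan is to exhibit $\mfm(\epsilon)$ as a fibre bundle over a moduli space of subbundles of $\mw$ of a shifted degree, with fibres ordinary Grassmannians, and then to extract smoothness, connectedness, and the dimension from this picture together with Proposition~\ref{bigoneprop}. Write $|\epsilon|=\sum_{p\in S}\epsilon(p)$. A point of $\mfm(\epsilon)$ is a rank-$\tr$, degree-$(-d)$ subsheaf $\mv\subseteq\mw$ whose quotient $\mw/\mv$ has torsion a length-$\epsilon(p)$ skyscraper at each $p\in S$ and no torsion elsewhere. I would pass to the saturation $\mv^{s}$ of $\mv$ in $\mw$: it is a rank-$\tr$ subbundle of $\mw$, $\mv^{s}/\mv$ is precisely the torsion subsheaf of $\mw/\mv$ (since $\mw/\mv^{s}$ is locally free), so $\deg\mv^{s}=-d+|\epsilon|$ and $\mv^{s}$ defines a point of $\mfm(d-|\epsilon|,\tr,\mw)$. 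Conversely, from $\mv^{s}$ together with, for each $p\in S$, the codimension-$\epsilon(p)$ subspace $\im(\mv_{p}\to\mv^{s}_{p})\subseteq\mv^{s}_{p}$, one recovers $\mv$ as $\ker\bigl(\mv^{s}\to\bigoplus_{p\in S}\mv^{s}_{p}/\im(\mv_{p}\to\mv^{s}_{p})\bigr)$. Carried out in families over $\mfm(\epsilon)$, this identifies $\mfm(\epsilon)$ with the relative product $\prod_{p\in S}\Gr\bigl(\tr-\epsilon(p),\mathcal{V}^{s}|_{p}\bigr)$ over $\mfm(d-|\epsilon|,\tr,\mw)$, where $\mathcal{V}^{s}$ is the universal saturated subbundle; in particular the structure morphism is smooth and projective with fibre $\prod_{p\in S}\Gr(\tr-\epsilon(p),\tr)$.

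Granting this description, the conclusions follow quickly. Since $\mfm(\epsilon)\neq\emptyset$ by hypothesis, the base $\mfm(d-|\epsilon|,\tr,\mw)$ is nonempty, hence smooth and connected by Proposition~\ref{bigoneprop} (parts (1) and (2)); as the fibres are smooth connected projective Grassmannians, $\mfm(\epsilon)$ is itself smooth and connected. For the dimension I would add the relative dimension $\sum_{p\in S}\epsilon(p)(\tr-\epsilon(p))$ of the Grassmannian bundle to $\dim\mfm(d-|\epsilon|,\tr,\mw)$, and then invoke Proposition~\ref{bigoneprop}(1), which gives
\[
\dim\mfm(d-|\epsilon|,\tr,\mw)=\tr(N-\tr)+(d-|\epsilon|)N-D\tr=\dim\mfm(d,\tr,\mw)-N\sum_{p\in S}\epsilon(p),
\]
yielding exactly the asserted formula.

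The step I expect to be the actual content is the relative assertion at the end of the first paragraph: that ``pass to the saturation'' is a morphism of schemes $\mfm(\epsilon)\to\mfm(d-|\epsilon|,\tr,\mw)$ and that the two constructions above are mutually inverse as morphisms of schemes, not merely a bijection on closed points. This is where fixing $\epsilon$ is essential: over $\mfm(\epsilon)$ the relative torsion of $\mathcal{W}/\mathcal{V}$ has constant fibrewise length $|\epsilon|$ supported on the fixed finite set $S$, so it is flat over the base, the relative saturation exists and is a subbundle, and the universal codimension-$\epsilon(p)$ subspaces of $\mathcal{V}^{s}|_{p}$ assemble into the claimed relative Grassmannians. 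Once this flatness point is secured, the remaining verifications --- connectedness of the base, the Grassmannian fibre dimension, and the arithmetic above --- are routine bookkeeping.
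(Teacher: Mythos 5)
Your overall strategy --- realize the stratum via a Hecke modification as a Grassmann bundle and read off smoothness, connectedness and the dimension --- is the same as the paper's, and you correctly flag the identification in your first paragraph as the real content. But the identification you propose is false, because your description of $\mfm(\epsilon)$ is not the one in the definition. The stratum is cut out by the condition that $\ker(\mv_p\to\mw_p)$ has rank $\epsilon(p)$ for $p\in S$; this kernel is $\operatorname{Tor}_1(\mw/\mv,k(p))$, so its rank is the \emph{number of generators} of the torsion of $\mw/\mv$ at $p$, not its length, and nothing at all is imposed at points outside $S$. A subsheaf which near $p$ is spanned by $t_p^2e_1,e_2,\dots,e_{\tr}$ (with $e_1,\dots,e_N$ a local frame of $\mw$) has $\ker(\mv_p\to\mw_p)$ of rank $1$, hence lies in a stratum with $\epsilon(p)=1$, yet its torsion at $p$ is $\mathcal{O}/t_p^2$ of length $2$ and its saturation gains degree $2$ there, not $1$. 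Consequently ``pass to the saturation'' is not a morphism to $\mfm(d-\sum_{p\in S}\epsilon(p),\tr,\mw)$: the degree of the saturation jumps on $\mfm(\epsilon)$, your flatness claim for the relative torsion fails, and your Grassmann bundle accounts only for an open piece of $\mfm(\epsilon)$, so neither smoothness, nor connectedness, nor the exact dimension of the whole stratum follows.

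The repair, which is what the paper does, is to modify $\mv$ not by its full saturation but by the elementary modification determined by the kernels themselves: $\widetilde{\mv}$ is generated near each $p$ by the sections $t_p^{-1}s$ with $s$ a section of $\mv$ whose fibre at $p$ lies in $K_p=\ker(\mv_p\to\mw_p)$. This raises the degree by exactly $\sum_{p\in S}\epsilon(p)$ in every case, but $\widetilde{\mv}$ is in general only a coherent subsheaf of $\mw$, so the base of the Grassmann bundle must be the full quot scheme $\quot(d-\sum_{p\in S}\epsilon(p),\tr,\mw)$ rather than its subbundle locus; by Proposition \ref{bigoneprop} that quot scheme is smooth, connected and of the dimension you computed, so your arithmetic survives unchanged. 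Even then the correspondence is a bijection not with $\mfm(\epsilon)$ itself but with the larger space $\ma$ of pairs $(\mv,(K_p)_{p\in S})$ where $K_p$ is an arbitrary $\epsilon(p)$-dimensional subspace of $\ker(\mv_p\to\mw_p)$; one concludes because $\mfm(\epsilon)$ sits inside $\ma$ as the open locus where those kernels have rank exactly $\epsilon(p)$, and a nonempty open subset of a smooth irreducible variety inherits smoothness, connectedness and dimension.
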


Given the lemma, we see that the quantity ~\eqref{intersectione}, is less than or equal to
$\sum_{p\in S}\sum_{a=1}^{\epsilon(p)} (a-i^p_a -\epsilon(p))<0$
unless $\epsilon(p)=0$ for each $p$. Therefore the enumerative intersection takes place over the stratum corresponding to $\epsilon(p)=0$. This stratum has
an open dense subset given by $\mfm(d,\tr,\mw)$. By Kleiman's transversality, the intersection occurs entirely on this subset.
\subsubsection{Proof of Lemma ~\ref{zen}}
Consider the variety $\mathcal{A}$ of tuples $(K_{p_1},\dots,K_{p_n},\mv)$ where $\mv$ is a coherent subsheaf of $\mw$ of degree $-d$ and rank $r$, $n$ is the number of points in $S$ and $K_p\subset\mv_p$ are $\epsilon(p)$
dimensional subspaces such that the composites $K_p\to\mv_p\to\mw_p$ are zero for each $p$. Our $\mfm(\epsilon)$ is an open subset of $\mathcal{A}$ and so it suffices to find the dimension of $\mathcal{A}$.

Given a datum $(K_{p_1},\dots,K_{p_n},\mv)$, we can define a coherent sheaf $\widetilde{\mv}$ containing $\mv$ as follows:
\begin{itemize}
\item $\widetilde{\mv}$ coincides with $\mv$ outside of $S$,
\item In a neighborhood $U_p$ of $p\in S$, sections of $\widetilde{\mv}$ are determined as follows. Let $t_p$ be a uniformizing parameter at $p$. Sections of $\widetilde{\mv}$ in $U_p$ are meromorphic sections $s$ of $\mv$ such that $t_p s$ is a regular section of $\mv$ on $U$, with  fiber at $p$ in $K_p$. 
\end{itemize}
Clearly $\widetilde{\mv}$ is a coherent subsheaf of $\mw$.
We also obtain subspaces $Q_p=\im(\mv_p\to\widetilde{\mv}_p)\subseteq \widetilde{\mv}_p$ of rank $\tr-\epsilon(p)$ for all $p\in S$. Therefore one obtains a tuple
$(Q_{p_1},\dots,Q_{p_n},\widetilde{\mv})$. Let $\mb$ be the scheme of such tuples. It is easy to see that
$(K_{p_1},\dots,K_{p_n},\mv)$ can be recovered from $(Q_{p_1},\dots,Q_{p_n},\widetilde{\mv})$ (and vice-versa, see Lemma A.2 in \cite{b4}).
The schemes $\ma$ and $\mb$ are therefore isomorphic. We calculate the dimension of $\mb$:  The dimension of the space of such $\widetilde{\mv}$ is $\dim\mfm(d,r,\mw)-N\sum_{p\in S}\epsilon(p)$. Adding the dimensions of the Grassmann bundles of $Q_p$, we see that Lemma \ref{zen} holds.

\section{Involution Identities} \label{Weyl}
There is an involution on $P_{\ell}(\frg)$ which sends a weight $\lambda$ to the highest weight $\lambda^*=-w_0(\lambda)$ of  $(V_{\lambda})^*$, where $w_0$ is the longest element of the Weyl group of $\frg$. For example if $\frg=\sL_{r+1}$ is of type $A$, then $\lambda^*$ is given by $\lambda^c$ for any  weight $\lambda$ represented by a Young diagram of size $(r+1)\times \ell$.
\begin{proposition}\label{relation}
On $\overline{\operatorname{M}}_{0,\op{n}}$, $\mathbb{D}_{\frg,\vec{\lambda},\ell}=\mathbb{D}_{\frg,\vec{\lambda}^*,\ell},$  where $\vec{\lambda} \in P_{\ell}(\frg)^n$.
\end{proposition}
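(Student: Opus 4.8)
The plan is to prove the stronger statement that the two bundles are themselves isomorphic, $\mathbb{V}_{\frg,\vec{\lambda},\ell}\cong\mathbb{V}_{\frg,\vec{\lambda}^*,\ell}$ on $\overline{\operatorname{M}}_{0,\op{n}}$, from which the asserted equality of first Chern classes is immediate. If $-w_0=\operatorname{id}$ on the weight lattice of $\frg$ then $\vec{\lambda}^*=\vec{\lambda}$ and there is nothing to do, but the construction below is uniform in any case. The source of the isomorphism is the Chevalley involution of $\frg$.

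First I would recall the Chevalley involution $\omega\colon\frg\to\frg$ --- the Lie algebra automorphism acting by $-\operatorname{id}$ on a fixed Cartan subalgebra $\frh$ and interchanging $\mathfrak{n}_+$ with $\mathfrak{n}_-$ (``negative transpose'' in the matrix models) --- and the standard fact that the $\omega$-twist of the irreducible representation $V_{\lambda}$ is isomorphic to $V_{\lambda^*}$, since a lowest weight vector of $V_{\lambda}$, of weight $w_0(\lambda)$, becomes a highest weight vector of weight $-w_0(\lambda)=\lambda^*$ after precomposing with $\omega$. Next I would extend $\omega$ to an automorphism $\hat{\omega}$ of the affine algebra $\hat{\frg}$ that is the identity on the loop variable and on the central element; since $\omega$ preserves the Killing form it preserves the cocycle, so $\hat{\omega}$ is well defined, fixes the level, and commutes with the Sugawara grading operator $L_0$ (the relevant quadratic tensor is built from the $\omega$-invariant Casimir). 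Consequently the $\hat{\omega}$-twist of the integrable highest weight module $\mathcal{H}_{\lambda}$ at level $\ell$ is an integrable level-$\ell$ module whose lowest-energy subspace is $V_{\lambda}^{\omega}\cong V_{\lambda^*}$, hence $\mathcal{H}_{\lambda}^{\hat{\omega}}\cong\mathcal{H}_{\lambda^*}$.

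Then I would invoke the naturality of the TUY construction \cite{TUY} of the sheaf of conformal blocks in its input data $(\frg,\ell,(\mathcal{H}_{\lambda_i}))$. Over a pointed curve, $\bigotimes_i\hat{\omega}$ intertwines the diagonal action of the current algebra $\frg\otimes H^0(C\setminus\{p_i\},\mathcal{O}_C)$ on $\bigotimes_i\mathcal{H}_{\lambda_i}$ with its action on $\bigotimes_i\mathcal{H}_{\lambda_i^*}$, hence identifies the two spaces of covacua; this identification is natural in the curve, compatible with propagation of vacua, and compatible with the factorization isomorphisms of \cite{TUY} --- over a boundary node the summand indexed by $\mu\in P_{\ell}(\frg)$ is carried to the summand indexed by $\mu^*$, merely reindexing the direct sum. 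Therefore the fiberwise identifications assemble to an isomorphism of vector bundles $\mathbb{V}_{\frg,\vec{\lambda},\ell}\cong\mathbb{V}_{\frg,\vec{\lambda}^*,\ell}$ on all of $\overline{\operatorname{M}}_{0,\op{n}}$, and applying $c_1$ gives $\mathbb{D}_{\frg,\vec{\lambda},\ell}=\mathbb{D}_{\frg,\vec{\lambda}^*,\ell}$.

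The step requiring genuine care, and the one I expect to be the main obstacle, is the last one: verifying that $\hat{\omega}$ acts on the entire sheaf-theoretic package --- gluing data and factorization rules included --- defining $\mathbb{V}_{\frg,\vec{\lambda},\ell}$ over the compactified moduli $\overline{\operatorname{M}}_{0,\op{n}}$, not merely fiberwise over $\operatorname{M}_{0,n}$; everything preceding it is routine representation theory. As a fallback one can instead read the identity off Fakhruddin's explicit Chern class formula \cite{Fakh}, using the elementary fact $(\lambda^*,\lambda^*+2\rho)=(\lambda,\lambda+2\rho)$ and hence the equality of conformal weights $\Delta_{\lambda}=\Delta_{\lambda^*}$, but the involution argument is the natural one and is what the phrase ``an involution of the Weyl chamber'' refers to.
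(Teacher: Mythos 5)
Your proposal is correct, but it takes a genuinely different route from the paper. The paper does not construct any isomorphism of bundles: it first reduces the identity of divisor classes to the case $n=4$ using Fakhruddin's results (a divisor class is determined by its intersections with $\op{F}$-curves, and factorization converts each such intersection into a sum of degrees of level-$\ell$ bundles on $\ovop{M}_{0,4}$, with the $\mu$-summand for $\vec{\lambda}$ matching the $\mu^*$-summand for $\vec{\lambda}^*$), and then proves the $n=4$ case from Fakhruddin's explicit degree formula together with the Weyl-invariance computation $(\mu,\mu+2\rho)=(\mu^*,\mu^*+2\rho)$. In other words, the ``fallback'' you mention at the end is essentially the paper's actual argument. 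Your main route via the Chevalley involution $\omega$ is sound representation theory ($V_{\lambda}^{\omega}\cong V_{\lambda^*}$, $\hat{\omega}$ preserves the level and the Sugawara operator, and twisting identifies the coinvariants for the current algebra), and it buys a stronger conclusion --- an isomorphism of the bundles themselves, not merely of their first Chern classes. What it costs is exactly the step you flag: one must verify that the identification is compatible with the full TUY sheaf construction over all of $\ovop{M}_{0,\op{n}}$, including the locally free structure near the boundary and the factorization data at nodes. That compatibility is standard (automorphisms of $\frg$ fixing the level act on the whole construction), but it is not proved in your write-up, whereas the paper's argument is complete modulo only the quoted formulas of \cite{Fakh}. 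Either way the conclusion holds.
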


By results in ~\cite{Fakh}, Proposition \ref{relation} reduces to the case $n=4$:

\begin{lemma}\label{relation1} On $\overline{\operatorname{M}}_{0,4}$,
$\deg\mathbb{V}_{\frg,\vec{\mu},\ell}=\deg\mathbb{V}_{\frg,\vec{\mu^*},\ell}.$
\end{lemma}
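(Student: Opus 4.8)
The plan is to deduce Lemma~\ref{relation1} --- and hence, via \cite{Fakh}, Proposition~\ref{relation} --- from a single automorphism: the Chevalley involution $\omega$ of $\frg$, whose effect on dominant weights is $\mu\mapsto\mu^{*}=-w_0\mu$. (The quoted reduction to $n=4$ is the statement that a conformal blocks divisor on $\overline{\operatorname{M}}_{0,\op{n}}$ is pinned down, in Fakhruddin's explicit expansion in $\psi$-classes and boundary divisors, by the coefficients attached to at most four marked points; so it suffices to work on $\overline{\operatorname{M}}_{0,4}$.)

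First I would extend $\omega$ to the automorphism $\hat\omega=\omega\otimes\operatorname{id}$ of the affine algebra $\hat\frg$: it fixes the canonical central element and preserves the level, and the $\hat\frg$-module obtained by twisting the integrable highest-weight module $\mathcal{H}_{\mu}$ of level $\ell$ by $\hat\omega$ is isomorphic to $\mathcal{H}_{\mu^{*}}$. Because $\omega$ is a \emph{fixed} automorphism of $\frg$ --- nothing depending on the curve or on the marked points $S=\{p_1,\dots,p_n\}$ --- it acts compatibly on each factor $\mathcal{H}_{\mu_i}$ and on the Lie algebra $\frg\otimes\mathcal{O}(\pone\setminus S)$ whose action defines the coinvariants; it therefore induces an isomorphism of the sheaves of conformal blocks $\mathbb{V}_{\frg,\vec{\mu},\ell}\cong\mathbb{V}_{\frg,\vec{\mu}^{*},\ell}$ over $\overline{\operatorname{M}}_{0,4}$ (indeed over any $\overline{\operatorname{M}}_{g,n}$). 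Passing to first Chern classes gives $\deg\mathbb{V}_{\frg,\vec{\mu},\ell}=\deg\mathbb{V}_{\frg,\vec{\mu}^{*},\ell}$, which is the lemma.

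As a cross-check --- and to connect directly with the reduction to $n=4$ --- one can instead verify that Fakhruddin's explicit formula for the degree on $\overline{\operatorname{M}}_{0,4}$ is manifestly invariant under $\mu_i\mapsto\mu_i^{*}$. That degree is $\rk\mathbb{V}_{\frg,\vec{\mu},\ell}\cdot\sum_{i=1}^4 b_{\mu_i}$ minus, over each of the three pairings $\{i,j\}\sqcup\{k,l\}$ of $\{1,2,3,4\}$, the quantity $\sum_{\nu\in P_\ell(\frg)} b_{\nu}\,\rk\mathbb{V}_{\frg,(\mu_i,\mu_j,\nu),\ell}\,\rk\mathbb{V}_{\frg,(\nu^{*},\mu_k,\mu_l),\ell}$, where $b_{\mu}=(\mu,\mu+2\rho)/(2(\ell+h^{\vee}))$. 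Here $b_{\mu}=b_{\mu^{*}}$, since $w_0$ preserves the form and $-w_0\rho=\rho$, so $(\mu^{*},\mu^{*}+2\rho)=(\mu,\mu+2\rho)$; the ranks of conformal blocks are unchanged when all weights are dualized simultaneously (apply the same involution to the three-pointed bundles, or invoke the symmetry of the Verlinde numbers); and $\nu\mapsto\nu^{*}$ is a bijection of $P_\ell(\frg)$, so each boundary sum is preserved after re-indexing by $\nu\mapsto\nu^{*}$. Invariance of the total follows.

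The only load-bearing point is the functoriality assertion in the second paragraph --- that $\hat\omega$ identifies the conformal blocks \emph{bundles}, not merely their fibers. This, however, is immediate from the independence of $\omega$ of $S$ and of the complex structure on $\pone$: $\hat\omega$ commutes with the gluing/degeneration data used to build the bundle over $\overline{\operatorname{M}}_{0,4}$, so no analytic input is needed. Everything else is a routine symmetry check, so I expect no real obstacle; if one prefers to avoid the module-twisting language entirely, the third paragraph gives a self-contained alternative once Fakhruddin's four-point formula is quoted verbatim.
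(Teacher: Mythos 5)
Your proposal is correct, and your third paragraph is in fact the paper's proof: the paper simply observes that $(\mu,\mu+2\rho)=(\mu^*,\mu^*+2\rho)$ by Weyl-invariance of the form (together with $-w_0\rho=\rho$) and then quotes Corollary 3.5 of \cite{Fakh}, leaving the re-indexing of the boundary sums and the rank invariance $\rk\mathbb{V}_{\frg,\vec{\mu},\ell}=\rk\mathbb{V}_{\frg,\vec{\mu}^*,\ell}$ implicit; you spell these out, which is a useful completion. Your primary route is genuinely different and stronger: twisting by a fixed automorphism of $\frg$ inducing $\mu\mapsto\mu^*$ yields an isomorphism of the bundles themselves, over any $\ovop{M}_{g,n}$, whereas the paper only establishes equality of first Chern classes on $\ovop{M}_{0,4}$ (which is all that is needed after the reduction via \cite{Fakh}). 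The one point in your main argument that deserves care is the claim that the $\hat\omega$-twist of $\mathcal{H}_{\mu}$ is $\mathcal{H}_{\mu^*}$: the Chevalley involution $\omega\otimes\operatorname{id}$ does \emph{not} preserve the standard triangular decomposition of $\hat\frg$ (it sends $\frn_+\oplus\frg\otimes t\Bbb{C}[t]$ to $\frn_-\oplus\frg\otimes t\Bbb{C}[t]$), so the twisted module is not manifestly a highest-weight module; you must either argue via irreducibility, integrability, level, and the bounded $d$-grading that it is nonetheless isomorphic to $\mathcal{H}_{\mu^*}$, or (cleaner) replace $\omega$ by the diagram automorphism inducing $-w_0$ (the composite of $\omega$ with an inner automorphism lifting $w_0$), which does preserve the triangular decomposition and acts trivially when $-w_0=\operatorname{id}$. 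With that adjustment the functoriality assertion is sound, and either of your two routes suffices; the paper's choice buys brevity by reducing everything to one formula, while yours buys a sheaf-level statement that explains \emph{why} the divisors agree.
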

\begin{proof}(of Lemma \ref{relation1}) Let $\mu$ be any dominant integral weight of $\frg$ and $\mu^*=-w_0(\mu)$. The Cartan Killing form is invariant under the action of the Weyl group of $\frg$, so
$(\mu, \mu+2\rho)=(\mu^*, \mu^*+2\rho)$, where $\rho$ is the half sum of positive roots.
We conclude the proof using Corollary 3.5 in ~\cite{Fakh}, and the above.
\end{proof}
The following lemma relates the critical and theta levels for $\sL_{r+1}$. Here, $\vec{\lambda}$ is an $n$-tuple of  {normalized} integral weights for $\sL_{r+1}$ such that  $r+1$ divides $\sum_{i=1}^n|\lambda_i|$. One also obtains a different proof, only for $\frg=\sL_{r+1}$, of theta level vanishing  (Remark \ref{ThetaVanishing}); see (b) below.
\begin{lemma}\label{averagenew}
\begin{enumerate}
\item[(a)] $\theta(\sL_{r+1},\vec{\lambda})=\frac{1}{2}\cdot (c(\sL_{r+1},\vec{\lambda})+c(\sL_{r+1},\vec{\lambda}^*))$. 
\item[(b)] If $\ell > \lL(\sL_{r+1},\vec{\lambda})$,  then $\mathbb{D}_{\sL_{r+1},\vec{\lambda},\ell}=c_1(\mathbb{V}_{\sL_{r+1},\vec{\lambda},\ell})=0$.
\item[(c)] If the weights $\lambda_1^*,\dots, \lambda_n^*$ are a permutation of the weights $\lambda_1,\dots,\lambda_n$, then $c(\sL_{r+1},\vec{\lambda})=\theta(\sL_{r+1},\vec{\lambda})$.
\end{enumerate}
\end{lemma}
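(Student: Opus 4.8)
The plan is to prove the three parts in the order (a), (c), (b): part (a) is a short computation with normalized weights, and (c) and (b) then follow formally from (a) together with Propositions \ref{newthing} and \ref{relation} (no appeal to the explicit quotient description of $\vr$ is needed, so (b) will be an independent proof of the $\sL_{r+1}$ case of Remark \ref{ThetaVanishing}). For (a) I work with normalized representatives: if $\lambda=(\lambda^{(1)}\geq\cdots\geq\lambda^{(r+1)}=0)$ is a normalized weight of $\sL_{r+1}$, then the normalization of $-w_0\lambda$ is $\lambda^{*}$ with $(\lambda^{*})^{(a)}=\lambda^{(1)}-\lambda^{(r+2-a)}$, so
$$|\lambda|+|\lambda^{*}|=\sum_{a=1}^{r+1}\lambda^{(a)}+\sum_{a=1}^{r+1}\bigl(\lambda^{(1)}-\lambda^{(a)}\bigr)=(r+1)\lambda^{(1)}=(r+1)(\lambda,\theta),$$
since $(\lambda,\theta)=\lambda^{(1)}-\lambda^{(r+1)}=\lambda^{(1)}$. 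Summing over $i$, dividing by $r+1$ and subtracting $2$ gives $c(\sL_{r+1},\vec{\lambda})+c(\sL_{r+1},\vec{\lambda}^{*})=-2+\sum_i(\lambda_i,\theta)=2\,\theta(\sL_{r+1},\vec{\lambda})$, which is (a). The displayed identity also shows $r+1\mid\sum_i|\lambda_i|$ iff $r+1\mid\sum_i|\lambda_i^{*}|$, so under the standing hypothesis all three quantities lie in $\Bbb{Z}$ (resp.\ $\tfrac12\Bbb{Z}$), as required by the definitions.

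Part (c) is immediate from (a): $c(\sL_{r+1},\vec{\lambda})$ depends only on $\sum_i|\lambda_i|$, which is permutation invariant, so if $(\lambda_1^{*},\dots,\lambda_n^{*})$ is a permutation of $(\lambda_1,\dots,\lambda_n)$ then $c(\sL_{r+1},\vec{\lambda}^{*})=c(\sL_{r+1},\vec{\lambda})$ and (a) collapses to the asserted equality. For (b), note first that $\vec{\lambda}\in P_{\ell}(\sL_{r+1})^n$ (otherwise $\vr$ is not defined), and rewrite the hypothesis $\ell>\theta(\sL_{r+1},\vec{\lambda})$, via (a), as $2\ell>c(\sL_{r+1},\vec{\lambda})+c(\sL_{r+1},\vec{\lambda}^{*})$. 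Since two integers both $\geq\ell$ would sum to at least $2\ell$, we must have $\ell>c(\sL_{r+1},\vec{\lambda})$ or $\ell>c(\sL_{r+1},\vec{\lambda}^{*})$. Because the Killing form is Weyl invariant, $(\lambda_i^{*},\theta)=(\lambda_i,\theta)\leq\ell$, so $\vec{\lambda}^{*}\in P_{\ell}(\sL_{r+1})^n$ as well; invoking Proposition \ref{relation} (which gives $\mathbb{D}_{\sL_{r+1},\vec{\lambda},\ell}=\mathbb{D}_{\sL_{r+1},\vec{\lambda}^{*},\ell}$) together with $(\vec{\lambda}^{*})^{*}=\vec{\lambda}$, I may assume without loss of generality that $\ell>c(\sL_{r+1},\vec{\lambda})$. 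Proposition \ref{newthing} then yields $\clar=\vr$, hence $c_1(\vr)=0$, i.e.\ $\mathbb{D}_{\sL_{r+1},\vec{\lambda},\ell}=0$.

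I do not anticipate a genuine obstacle: the only nontrivial content is part (a), and (b) and (c) are formal consequences of it. The points to be careful about are getting the normalization of $\lambda^{*}$ right in (a) (equivalently, the identity $|\lambda|+|\lambda^{*}|=(r+1)(\lambda,\theta)$), and checking in (b) that the hypotheses of Propositions \ref{newthing} and \ref{relation} ($\vec{\lambda},\vec{\lambda}^{*}\in P_{\ell}$, and the legitimacy of reducing to the case $\ell>c(\sL_{r+1},\vec{\lambda})$) are in force before those results are applied; the pigeonhole step itself is elementary.
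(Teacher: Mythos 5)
Your proof is correct and follows essentially the same route as the paper: part (a) via the normalization identity $|\lambda|+|\lambda^{*}|=(r+1)\lambda^{(1)}=(r+1)(\lambda,\theta)$, and parts (b) and (c) as formal consequences of (a) together with Propositions \ref{newthing} and \ref{relation}. The paper leaves the deduction of (b) as "easy to see"; your pigeonhole step and the check that $\vec{\lambda}^{*}\in P_{\ell}(\sL_{r+1})^n$ are exactly the details being elided there.
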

\begin{proof}
To prove (a), normalize $\lambda_i^*$ to obtain weights $\mu_i$. It is easy to see that
$$|\mu_i|=(r+1)\ell- |\lambda_i|- (\ell- \lambda_i^{(1)})(r+1)= (r+1)\lambda_i^{(1)}-|\lambda_i|, $$
which yields (a). It is easy to see that (b) follows from (a), and Propositions \ref{relation} and \ref{newthing} (applied to $\vec{\lambda}$ and $\vec{\lambda}^*$). Finally,  (c) follows from (a). 
\end{proof}

\section{Strange duality and  critical level symmetries}\label{strangeandcritical}

Several rank-level (or ``strange'')  dualities have been proposed by many authors (inspired by work in mathematical physics e.g., \cite{NakTsu,Belkale1,MO2,RO}), over smooth pointed curves.  In genus $0$ these take the form of isomorphisms (well defined up to scalars) over $\operatorname{M}_{0,n}$ of the form, for suitable conformal embeddings $\frg_1 \oplus\frg_2\subseteq \frg$ , and $x=(\mathbb{P}^1,p_1,\dots,p_n) \in \operatorname{M}_{0,n}$
\begin{equation}\label{SDgeneral}
\mathbb{V}_{\frg_1, \vec{\lambda}, \ell_1}|_{x}\cong\mathbb{V}_{\frg_2, \vec{\mu}, \ell_2}|_{x}^*.
\end{equation}
These projective isomorphisms are defined only on the interior $\operatorname{M}_{0,n}$. For example, there is an isomorphism on $\operatorname{M}_{0,n}$ \cite{NakTsu,RO}, well defined up to scalars, of the form ($\vec{\lambda}\leftrightarrow \vec{\mu}$ is essentially the transpose up to diagram automorphisms)
\begin{equation}\label{SD}
\mathbb{V}_{\sL_{r+1}, \vec{\lambda}, \ell}|_{x}\cong \mathbb{V}_{\sL_{\ell}, \vec{\mu}, r+1}|_{x}^*.
\end{equation}

While our critical level symmetries are reminiscent of this ``strange duality",  the bundles $\mathbb{V}_{\sL_{r+1},\vec{\lambda}, \ell}$ and  $\mathbb{V}_{\sL_{\ell+1}, \vec{\lambda}^T, r}$ on $\overline{\operatorname{M}}_{0,\op{n}}$ are not dual, and often have different ranks.  Because they are both globally generated, their first Chern classes are base point free, and hence effective.   For two effective divisors to be dual, would mean they are trivial.

Moreover, the classical duality \eqref{duality} can (eventually) be viewed as a special case of the general strange duality \eqref{SD} given above (see Remark \ref{mondayremark}).
We could say that the critical level identities of this paper are orthogonality relations ``via" the strange duality \eqref{SD}. So whenever both sides of \eqref{SDgeneral} are classical, i.e., coincide with co-invariants, there is perhaps a symmetry of conformal blocks divisors (at levels $\ell_1-1$, $\ell_2-1$ for $\frg_1$ and $\frg_2$).

However, in \cite{SM}, the third author studies identities that do come from standard  level-rank dualities associated to conformal embeddings of affine Lie algebras. Very roughly speaking, a conformal embedding of affine Lie algebras $\frg_1 \oplus\frg_2\subseteq \frg$  producing a level rank duality,
gives rise to a corresponding relation of conformal blocks divisors which takes the shape:
 $$\mathbb{D}_{\frg_1, \vec{\lambda}, \ell_1}+\mathbb{D}_{\frg_2, \vec{\mu}, \ell_2}= c\cdot \mathbb{D}_{\frg, \vec{\nu}, 1} + E,$$
where $E$ is a
sum of boundary and $\psi$ classes and $c\neq 0$, determined by the embedding.

\bibliographystyle{alphaurl}

\end{document}